\def\textmatrix#1&#2\\#3&#4\\{\bigl({#1 \atop #3}\ {#2 \atop #4}\bigr)}
\def\dispmatrix#1&#2\\#3&#4\\{\left({#1 \atop #3}\ {#2 \atop #4}\right)}
\newcommand{\beg}{\begin{equation}}
	\newcommand{\eeg}{\end{equation}}
\newcommand{\ben}{\begin{eqnarray*}}
	\newcommand{\een}{\end{eqnarray*}}
\newtheorem{thm}{Theorem}[section]
\newtheorem{cor}[thm]{Corollary}
\newtheorem{lem}[thm]{Lemma}
\newtheorem{prop}[thm]{Proposition}
\numberwithin{equation}{section} 
\theoremstyle{definition}
\newtheorem{defn}[thm]{Definition}
\newtheorem{rem}[thm]{Remark}
\newcommand{\HS}{\mathcal H}
\newcommand{\C}{\mathbb{C}}
\newcommand{\D}{\mathbb{D}}
\newcommand{\T}{\mathbb{T}}
\newcommand{\ov}{\overline}
\begin{document}
	\title[Distinguished varieties in the polydisc and dilation]
	{Distinguished varieties in the polydisc and dilation of commuting contractions}
	
	\author[Sourav Pal]{Sourav Pal}
	
	\address[Sourav Pal]{Mathematics Department, Indian Institute of Technology Bombay,
		Powai, Mumbai - 400076, India.} \email{sourav@math.iitb.ac.in , souravmaths@gmail.com}
		

	\keywords{Polydisc, Distinguished variety, Distinguished set, Joint spectrum, Linear matrix-pencil, Contraction, Rational dilation}
	
	\subjclass[2010]{47A13, 32B15, 47A20, 47A25, 14H50}
	
	\thanks{The author was supported by a Seed Grant of IIT Bombay, the CPDA and the MATRICS Award (Award No. MTR/2019/001010) of Science and Engineering Research Board (SERB), India.}

	\begin{abstract}
		
A distinguished variety in the polydisc $\mathbb D^n$ is an affine complex algebraic variety that intersects $\mathbb D^n$ and exits the domain through the $n$-torus $\mathbb T^n$ without intersecting any other part of the topological boundary of $\mathbb D^n$. We find two different characterizations for a distinguished variety in the polydisc $\mathbb D^n$ in terms of the Taylor joint spectrum of certain linear matrix-pencils and thus generalize the seminal work due to Agler and M\raise.45ex\hbox{c}Carthy [Acta Math., 2005] on distinguished varieties in $\mathbb D^2$. We show that a distinguished variety in $\mathbb D^n$ is a part of an affine algebraic curve which is a set-theoretic complete intersection. We also show that if $(T_1, \dots , T_n)$ is commuting tuple of Hilbert space contractions such that the defect space of $T=\prod_{i=1}^n T_i$ is finite dimensional, then $(T_1, \dots , T_n)$ admits a commuting unitary dilation $(U_1, \dots , U_n)$ with $U=\prod_{i=1}^n U_i$ being the minimal unitary dilation of $T$ if and only if some certain matrices associated with $(T_1, \dots , T_n)$ define a distinguished variety in $\mathbb D^n$.
 
	\end{abstract}
	
	\maketitle
	
	\section{Introduction} \label{sec:01}

\vspace{0.2cm}

\noindent Throughout the paper, every operator is a bounded linear operator acting on a complex Hilbert space. A contraction is an operator with norm not greater one. We denote by $\D$ and $\T$, the open unit disc and unit circle of the complex plane $\C$ with center at the origin. We define spectral set, complete spectral set, rational dilation etc. in Section 2.

 In 2005, Agler and M\raise.45ex\hbox{c}Carthy published a very
influential article \cite{AM05}, where they established the fact
that for any pair of commuting contractive matrices $(T_1,T_2)$,
there is a one-dimensional complex algebraic variety $V$ in $\C^2$ such that $V$ intersects $\D^2$ and that the von-Neumann
inequality holds on $V\cap \D^2$ for any polynomial $p$ in $\C[z_1,z_2]$, i.e.
\begin{equation}\label{eqn:Intro-01}
\| p(T_1,T_2) \|\leq \sup_{(z_1,z_2)\in V \cap \D^2} |p(z_1,z_2)|,
\end{equation}
provided that $T_1,T_2$ do not have eigenvalues of unit
modulus. The variety $V$ exits the bidisc through
its distinguished boundary, the $2$-torus $\mathbb T^2$, without
intersecting any other part of its topological boundary. Such a set $V\cap \D^2$ is called a
distinguished variety in $\mathbb D^2$ and has the following determinantal representation
\begin{equation}\label{eqn:Intro-02}
V\cap \D^2=\{ (z,w)\in \mathbb D^2\,:\, \det (\Psi(z)-wI)=0 \}
\end{equation}
for some matrix-valued rational function $\Psi$ on the unit disc
$\mathbb D$ that is unitary on the unit circle $\mathbb T$. In \cite{ando}, Ando proved that for any pair of commuting contractions $T_1,T_2$ and for any polynomial $p\in\mathbb C[z_1,z_2]$,
\begin{equation}\label{eqn:ando}
\| p(T_1,T_2) \|\leq \sup_{(z_1,z_2)\in\overline{\mathbb D^2}}
|p(z_1,z_2)|.
\end{equation}
So in particular when $T_1,T_2$ are commuting matrices, Ando's inequality (\ref{eqn:ando}), which holds on the two-dimensional bidisc, also holds on the one-dimensional set $V\cap \mathbb D^2$. This is to say that the result of Agler and M\raise.45ex\hbox{c}Carthy makes a refinement of Ando's inequality for commuting contractive matrices $T_1,T_2$ without any unimodular eigenvalues. In geometric language it states that a pair of commuting matrices $(T_1,T_2)$ for which $\ov{\D^2}$ is a spectral set, can also have a curve lying in $\ov{\mathbb D^2}$ as its spectral set. This reduces a spectral set for $(T_1,T_2)$ not just by a subset but by a dimension. For a pair of commuting contractions $(T_1,T_2)$, we have the following path-breaking result due to Ando whose impact has always been extraordinary.

\begin{thm} [Ando, \cite{ando}] \label{thm:Char-Contraction}
Let $(T_1,T_2)$ be a pair of commuting operators acting on a complex Hilbert space $\HS$. Then the following are equivalent.
\begin{itemize}
\item[(i)] $(T_1,T_2)$ is a pair of contractions.

\item[(ii)] The inequality $(\ref{eqn:ando})$ holds for every scalar or matricial polynomial $p$.

\item[(iii)] There is a Hilbert space $\mathcal K$ containing $\HS$ as a closed linear subspace and a pair of commuting unitaries $(U_1,U_2)$ such that $p(T_1,T_2)=P_{\HS}\, p(U_1,U_2)|_{\HS}$ for every polynomial $p(z_1,z_2)$, where $P_{\HS}: \mathcal K \rightarrow \HS$ is the orthogonal projection. 
\end{itemize}

\end{thm}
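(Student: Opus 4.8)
The plan is to prove the cyclic chain $(i)\Leftarrow(iii)\Rightarrow(ii)\Rightarrow(i)$, concentrating essentially all of the work in the implication $(i)\Rightarrow(iii)$. For $(iii)\Rightarrow(ii)$, I would use that the commuting unitaries $(U_1,U_2)$ generate a commutative $C^*$-algebra, so by the spectral theorem their joint spectrum $\sigma(U_1,U_2)$ lies in $\mathbb T^2$; hence for any polynomial $p$ one has $\|p(U_1,U_2)\|=\sup_{\sigma(U_1,U_2)}|p|\le \sup_{\overline{\mathbb D^2}}|p|$, and since $p(T_1,T_2)=P_{\mathcal H}\,p(U_1,U_2)|_{\mathcal H}$ is a compression, its norm can only be smaller. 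The matricial case follows verbatim after replacing $\mathcal H$ by $\mathcal H\otimes\mathbb C^k$ and $T_i$ by $T_i\otimes I$. For $(ii)\Rightarrow(i)$, I would simply feed the coordinate polynomials $p=z_1$ and $p=z_2$ into $(\ref{eqn:ando})$ to read off $\|T_1\|,\|T_2\|\le 1$.

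The substance lies in $(i)\Rightarrow(iii)$. Writing $D_i=(I-T_i^*T_i)^{1/2}$ and $\mathcal D_i=\overline{\mathrm{ran}}\,D_i$, the decisive first step is the norm identity, valid for every $h\in\mathcal H$,
\[
\|D_1T_2h\|^2+\|D_2h\|^2=\|h\|^2-\|T_1T_2h\|^2=\|D_1h\|^2+\|D_2T_1h\|^2,
\]
which is a direct consequence of $D_i^2=I-T_i^*T_i$ together with the commutation $T_1T_2=T_2T_1$. This equality says that the assignment sending $(D_1T_2h,\,D_2h)$ to $(D_1h,\,D_2T_1h)$ is a well-defined isometry between two subspaces of $\mathcal D_1\oplus\mathcal D_2$, and I would extend it to a unitary $W$ on $\mathcal D_1\oplus\mathcal D_2$, enlarging the spaces by an auxiliary summand if necessary to equalize the dimensions of the two orthogonal complements.

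With $W$ in hand, the plan is to build a commuting isometric dilation $(V_1,V_2)$ of $(T_1,T_2)$ on the ampliated space $\mathcal K=\mathcal H\oplus\ell^2(\mathcal D_1\oplus\mathcal D_2)$, defining $V_1$ and $V_2$ by explicit block-operator formulas assembled from the $T_i$, the $D_i$, the unilateral shift on $\ell^2$, and $W$, with $W$ inserted precisely so as to force $V_1V_2=V_2V_1$ while each $V_i$ restricts to $T_i$ on $\mathcal H$. Finally I would pass from commuting isometries to commuting unitaries: each $V_i$ admits a minimal unitary extension, and one arranges these on a common larger space $\widetilde{\mathcal K}\supseteq\mathcal K$ so that the resulting pair $(U_1,U_2)$ still commutes and dilates $(T_1,T_2)$, yielding $p(T_1,T_2)=P_{\mathcal H}\,p(U_1,U_2)|_{\mathcal H}$. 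I expect the main obstacle to be exactly this middle step — writing down the block isometries and verifying \emph{simultaneously} the isometry, dilation, and commutation properties: the norm identity guarantees that $W$ exists, but threading it through the infinitely many defect copies so that $V_1$ and $V_2$ genuinely commute is the delicate bookkeeping at the heart of Ando's argument.
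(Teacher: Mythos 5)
The paper does not prove this theorem; it is quoted from Ando's 1963 paper with a citation, so there is no internal argument to compare yours against. Your sketch is the classical Ando proof and its skeleton is sound: $(iii)\Rightarrow(ii)$ via the joint spectrum of commuting unitaries and compression, $(ii)\Rightarrow(i)$ via the coordinate polynomials, and for $(i)\Rightarrow(iii)$ the norm identity
\[
\|D_1T_2h\|^2+\|D_2h\|^2=\|h\|^2-\|T_1T_2h\|^2=\|D_1h\|^2+\|D_2T_1h\|^2
\]
is indeed the correct engine, as is the remedy of padding the defect spaces so that the partial isometry $(D_1T_2h,D_2h)\mapsto(D_1h,D_2T_1h)$ extends to a unitary $W$.

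Two caveats. First, the step you yourself flag as delicate is the actual content of Ando's theorem: without the explicit block formulas (Ando's device is to group the coordinates of $\ell^2(\mathcal D_1\oplus\mathcal D_2)$ into consecutive blocks and modify the second Sch\"affer-type isometry by a unitary assembled from infinitely many copies of $W$, after which $V_1V_2=V_2V_1$ is a computation), what you have is a plan rather than a proof. Second, your closing step is understated to the point of hiding a theorem: one cannot simply take the minimal unitary extensions of $V_1$ and $V_2$ separately and ``arrange'' them on a common space, since nothing guarantees the resulting unitaries commute. That a pair of commuting isometries admits a pair of commuting unitary extensions is a result of It\^o (see also Sz.-Nagy--Foias): one extends $V_1$ to a unitary $U_1$ on $\mathcal K_1=\overline{\bigcup_{n\ge 0}U_1^{-n}\mathcal K}$, verifies that $V_2$ extends to an isometry on $\mathcal K_1$ commuting with $U_1$, and then repeats for the second coordinate. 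You should either cite that result explicitly or supply the argument; as written, the last sentence of your $(i)\Rightarrow(iii)$ is a genuine gap.
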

The part-(iii) of the above theorem tells us that a pair of commuting contractions always admit a commuting unitary dilation. However, we do not have an analogue of part-(ii) and part-(iii) of Theorem \ref{thm:Char-Contraction} for a tuple of commuting contractions $(T_1, \dots , T_n)$ when $n\geq 3$ (see e.g. \cite{Par, paulsen}). Also, note that the inequality (\ref{eqn:Intro-01}) holds even for any matricial polynomial $p$ as was shown in \cite{AM05} and hence the closure of the one-dimensional variety $V\cap \D^2$ becomes a complete spectral set for such $(T_1,T_2)$.  Taking cue from such mysterious behaviour of a tuple of commuting contractions and its relation with the geometry of the polydisc $\D^n$, we make a new attempt towards generalizing the novel idea of Agler and McCarthy from two to several dimensions. To this end we mention that recently the results due to Agler and McCarthy for the bidisc have been reformulated and extended for a wider class of operators in \cite{B-K-S} and \cite{D-S} respectively.\\ 

Recall that for any positive integer $n$, $\mathbb A_{\mathbb C}^n$ denotes the \textit{affine} $n$-\textit{space} $\mathbb C^n$ over $\mathbb C$. If $S$ is any subset of the polynomial ring $\mathbb C[z_1,\dots,z_n]$, the \textit{zero set} of $S$ is defined to be the set of common zeros of all elements of $S$, namely
\[
Z(S)=\{P\in\mathbb A_{\mathbb C}^n\,:\, f(P)=0 \text{ for all } f\in S   \}.
\]
We shall follow standard terminologies from the literature of basic algebraic geometry (e.g. \cite{Ro:H}) and define the following.
\begin{defn}
A subset $W$ of $\mathbb A_{\mathbb C}^n$ is called an \textit{affine algebraic set} or simply an \textit{algebraic set} if there is a subset $T$ of $\mathbb C[z_1,\dots,z_n]$ such that $W=Z(T)$. An \textit{affine algebraic variety} or simply an \textit{algebraic variety} in $\mathbb A_{\mathbb C}^n$ is an irreducible algebraic set in $\mathbb A_{\mathbb C}^n$.
\end{defn}

\begin{defn}
A \textit{distinguished set} $\Omega$ in a domain $G\subset \mathbb C^n$ is the intersection of $G$ with an algebraic set $W\subset \mathbb A^n_{\mathbb C}$ such that the complex-dimension of $W$ is greater than zero and that $W$ exits the domain $G$ through the distinguished boundary $b\overline{G}$ of $G$ without intersecting any other part of the topological boundary $\partial \overline{G}$ of $G$, that is,
\[
\dim W >0, \quad \Omega = G \cap W \quad \text{ and } \quad \overline{\Omega}\cap \partial \overline{G}= \overline{\Omega} \cap b\overline{G},
\]
A \textit{distinguished variety} in $G$ is an irreducible distinguished set. Also for a distinguished set $\Omega$, that is defined by an algebraic set $W$, we denote by $\overline{\Omega}$ and $\partial \Omega$ the sets $W\cap \overline{G}$ and $W\cap b \ov{G} \,\;(=W\cap \partial \ov{G})$ respectively.
\end{defn}
Thus, according to our definitions the distinguished sets in $\D^2$ were studied in \cite{AM05}, not only the distinguished varieties. Nevertheless, a distinguished set is nothing but a union of distinguished varieties. Only the distinguished sets or varieties with dimension greater than zero are considered here in order to avoid triviality as the same was done in \cite{AM05}.\\

In Theorem \ref{thm:poly-01} and Corollary \ref{cor:001}, we show that a distinguished variety or a distinguished set in the polydisc $\D^n$ has complex-dimension $1$ for any $n \geq 2$. Then in Theorems \ref{thm:DVpoly} \& \ref{thm:poly-DVchar-2} we find two different representations for a distinguished variety and a general distinguished set in $\D^n$. Both these representations are obtained in terms of the Taylor joint spectrum of certain linear matrix-pencils. We also show in Corollary \ref{cor:poly-01} that a distinguished variety in the polydisc is a set-theoretic complete intersection, i.e. can have a set of $n-1$ irreducible generating polynomials. This is not true for the second representation of a distinguished set in $\D^n$ as in Theorem \ref{thm:poly-DVchar-2}. In fact, here the representation is given in terms of $n$ linear matrix-pencils that are framed with a help of $n$ projections and $n$ commuting unitaries acting on a finite dimensional Hilbert space.

We have already mentioned that parts-(ii) \& (iii) of Theorem \ref{thm:Char-Contraction} do not hold in general. Thus, one naturally asks if we have an analogue of Ando's inequality (\ref{eqn:Intro-02}) and a success of normal-boundary dilation (i.e. commuting unitary dilation) on a distinguished set in $\D^n$ when $n\geq 3$. In Theorems \ref{thm:VN}, \ref{thm:dilation-variety1} \& \ref{thm:dilation-variety2}, we show that both the questions have affirmative answers if and only if there are certain finite-dimensional projections and commuting unitaries that define a distinguished set in $\D^n$.

\vspace{0.2cm}

\section{Some basic concepts and background materials} \label{sec:prelim}

\vspace{0.3cm}

\noindent In this Section, we recall a few basic concepts associated with commuting tuple of Hilbert space operators having a certain compact subset of a complex-Euclidean space as a spectral set. We begin with the Taylor joint spectrum of a commuting operator tuple.

\subsection{The Taylor joint spectrum}

Let $\Lambda$ be the exterior algebra on $n$ generators
$e_1,...e_n$, with identity $e_0\equiv 1$. $\Lambda$ is the
algebra of forms in $e_1,...e_n$ with complex coefficients,
subject to the collapsing property $e_ie_j+e_je_i=0$ ($1\leq i,j
\leq n$). Let $E_i: \Lambda \rightarrow \Lambda$ denote the
creation operator, given by $E_i \xi = e_i \xi $ ($\xi \in
\Lambda, 1 \leq i \leq n$).
 If we declare $ \{ e_{i_1}... e_{i_k} : 1 \leq i_1 < ... < i_k \leq n \}$ to be an
 orthonormal basis, the exterior algebra $\Lambda$ becomes a Hilbert space,
 admitting an orthogonal decomposition $\Lambda = \oplus_{k=1} ^n \Lambda^k$
 where $\dim \Lambda ^k = {n \choose k}$. Thus, each $\xi \in \Lambda$ admits
 a unique orthogonal decomposition
 $ \xi = e_i \xi' + \xi''$, where $\xi'$ and $\xi ''$ have no $e_i$
contribution. It then follows that that  $ E_i ^{*} \xi = \xi' $,
and we have that each $E_i$ is a partial isometry, satisfying
$E_i^*E_j+E_jE_i^*=\delta_{i,j}$. Let $\mathcal X$ be a normed
space, let $\underline{T}=(T_1,...,T_n)$ be a commuting $n$-tuple
of bounded operators on $\mathcal X$ and set $\Lambda(\mathcal
X)=\mathcal X\otimes_{\mathbb{C}} \Lambda$. We define
$D_{\underline T}: \Lambda (\mathcal X) \rightarrow \Lambda
(\mathcal X)$ by

\[
D_{\underline T} = \sum_{i=1}^n T_i \otimes E_i .
\]
Then it is easy to see $D_{\underline T}^2=0$, so $Ran
D_{\underline T} \subset Ker D_{\underline T}$. The commuting
$n$-tuple is said to be \textit{non-singular} on $\mathcal X$ if
$Ran D_{\underline T}=Ker D_{\underline T}$.
\begin{defn}
The Taylor joint spectrum of ${\underline T}$ on $\mathcal X$ is
the set
\[
\sigma_T({\underline T},\mathcal X) = \{
\lambda=(\lambda_1,...,\lambda_n)\in \mathbb{C}^n : {\underline
T}-\lambda \text{ is singular} \}.
\]
\end{defn}
\begin{rem}
The decomposition $\Lambda=\oplus_{k=1}^n \Lambda^k$ gives rise to
a cochain complex $K({\underline T},\mathcal X)$, known as the
Koszul complex associated to ${\underline T}$ on $\mathcal X$, as
follows:
\[
K({\underline T},\mathcal X):0 \rightarrow \Lambda^0(\mathcal
X)\xrightarrow{D_{\underline T}^0}... \xrightarrow{D_{\underline
T}^{n-1}} \Lambda^n(\mathcal X) \rightarrow 0 ,
\]
where $D_{\underline T}^{k}$ denotes the restriction of
$D_{\underline T}$ to the subspace $\Lambda^k(\mathcal X)$. Thus,
\[
\sigma_T({\underline T},\mathcal X) = \{ \lambda\in \mathbb{C}^n :
K({\underline T}-\lambda ,\mathcal X)\text{ is not exact} \}.
\]
\end{rem}
For a further reading on Taylor joint spectrum an interested reader is referred to Taylor's original works, \cite{Taylor, Taylor1} or Curto's well-written survey article \cite{curto}.

\begin{rem}
Let $A_1, \dots , A_n$ be commuting complex square matrices of same order. Then the Taylor joint spectrum $\sigma_T(A_1, \dots , A_n)$ consists of joint eigenvalues of $A_1, \dots , A_n$.
\end{rem}

\subsection{Spectral and complete spectral set}

We shall follow Arveson's terminologies as in \cite{Arveson-II}. Let $X$ be a compact subset of $\mathbb
C^n$ and let $\mathcal R(X)$ denote the algebra of all rational
functions on $X$, that is, all quotients $p/q$ of polynomials $p,q$ from $\C[z_1, \dots , z_n]$ for which $q$ has no zeros in $X$. The norm of an element
$f$ in $\mathcal R(X)$ is defined as
$$\|f\|_{\infty, X}=\sup \{|f(\xi)|\;:\; \xi \in X  \}. $$
Also for each $k\geq 1$, let $\mathcal R_k(X)$ denote the algebra
of all $k \times k$ matrices over $\mathcal R(X)$. Obviously each
element in $\mathcal R_k(X)$ is a $k\times k$ matrix of rational
functions $F=[f_{ij}]$ and we can define a norm on $\mathcal
R_k(X)$ in the canonical way
$$ \|F\|=\sup \{ \|F(\xi)\|\;:\; \xi\in X \}, $$ thereby making
$\mathcal R_k(X)$ into a non-commutative normed algebra. Let
$\underline{T}=(T_1,\cdots,T_n)$ be an $n$-tuple of commuting
operators on a Hilbert space $\mathcal H$. The set $X$ is said to
be a \textit{spectral set} for $\underline T$ if the Taylor joint
spectrum $\sigma_T (\underline T)$ of $\underline T$ is a subset
of $X$ and
\begin{equation}\label{defn1}
\|f(\underline T)\|\leq \|f\|_{\infty, X}\,, \textup{ for every }
f\in \mathcal R(X).
\end{equation}
Here $f(\underline T)$ can be interpreted as $p(\underline
T)q(\underline T)^{-1}$ when $f=p/q$. Moreover, $X$ is said to be
a \textit{complete spectral set} if $\|F(\underline T)\|\leq
\|F\|$ for every $F$ in $\mathcal R_k(X)$, where $k$ is any natural number. It obvious that if $X$ is a complete spectral set for $\underline
T$ then $X$ is a spectral set for $\underline T$. 

\subsection{The distinguished boundary and rational dilation}

Let $\mathcal A(X)$ be an algebra of continuous complex-valued
functions on $X$ which separates the points of $X$. A
\textit{boundary} for $\mathcal A(X)$ is a closed subset $\Delta$ of $X$ such that every function in $\mathcal A(X)$ attains its
maximum modulus on $\Delta$. It follows from the theory of
uniform algebras that the intersection of all the boundaries of
$X$ is also a boundary for $\mathcal A(X)$ (see Theorem 9.1 of
\cite{wermer}). This smallest boundary is called the
$\check{\textup{S}}$\textit{ilov boundary} for $\mathcal A(X)$.
When $\mathcal A(X)$ is the algebra of rational functions which
are continuous on $X$, the $\check{\textup{S}}$\textit{ilov
boundary} for $\mathcal A(X)$ is called the \textit{distinguished
boundary} of $X$ and is denoted by $bX$.

A commuting $n$-tuple of operators $\underline T$ on a Hilbert
space $\mathcal H$, having $X$ as a spectral set, is said to have
a \textit{rational dilation} or \textit{normal}
$bX$-\textit{dilation} if there exists a Hilbert space $\mathcal
K$, an isometry $V:\mathcal H \rightarrow \mathcal K$ and an
$n$-tuple of commuting normal operators $\underline
N=(N_1,\cdots,N_n)$ on $\mathcal K$ with $\sigma_T(\underline
N)\subseteq bX$ such that
\begin{equation}\label{rational-dilation}
f(\underline T)=V^*f(\underline N)V, \textup{ for every } f\in
\mathcal R(X),
\end{equation}
or, in other words $f(\underline T)=P_{\mathcal H}f(\underline N)|_{\mathcal H}$ for every $f\in \mathcal R(X)$ when $\mathcal H$ is considered as a closed linear subspace of $\mathcal K$. Moreover, the dilation is called {\em minimal} if
\[
\mathcal K=\overline{\textup{span}}\{ f(\underline N) h\,:\;
h\in\mathcal H \textup{ and } f\in \mathcal R(K) \}.
\]
A celebrated
theorem due to Arveson tells us that $X$ being a complete spectral set for $\underline T$ is equivalent to the existence of a normal
$bX$-dilation of $\underline T$.

\begin{thm}[\cite{Arveson-II}, Theorrem 1.2.2 \& Corrollary] \label{Arveson}
Let $X \subset \C^n$ be compact. A commuting $n$-tuple of Hilsert space operators $(T_1, \dots , T_n)$ admits a normal $\partial X$-dilation if and only if $X$ is a complete spectral set for $(T_1, \dots ,T_n)$.
\end{thm}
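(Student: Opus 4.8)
The plan is to prove the two implications separately: the forward direction is elementary, while the converse is the substantive half and rests on Arveson's extension theorem combined with Stinespring's dilation theorem. For the forward implication, suppose $\underline T=(T_1,\dots,T_n)$ admits a normal $bX$-dilation, say a commuting normal tuple $\underline N=(N_1,\dots,N_n)$ on $\mathcal K$ with $\sigma_T(\underline N)\subseteq bX$ and an isometry $V:\HS\RW\mathcal K$ implementing $f(\underline T)=V^*f(\underline N)V$. Then for any matricial $F=[f_{ij}]\in\mathcal R_k(X)$ one has $F(\underline T)=(I_k\otimes V)^*F(\underline N)(I_k\otimes V)$, and since $I_k\otimes V$ is again an isometry, $\|F(\underline T)\|\le\|F(\underline N)\|$. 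Because the $N_i$ are commuting normals, the joint spectral theorem gives $\|F(\underline N)\|=\sup_{\xi\in\sigma_T(\underline N)}\|F(\xi)\|\le\sup_{\xi\in bX}\|F(\xi)\|$, and a standard vector-functional argument shows $\sup_{\xi\in bX}\|F(\xi)\|=\sup_{\xi\in X}\|F(\xi)\|=\|F\|$ because each scalar slice $\langle F(\cdot)u,v\rangle$ lies in $\mathcal R(X)$ and attains its maximum modulus on the Shilov boundary $bX$. Hence $\|F(\underline T)\|\le\|F\|$ for all $k$, so $X$ is a complete spectral set.

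For the converse I would recast the hypothesis representation-theoretically. Regard $A=\mathcal R(X)$ as a unital subalgebra of the commutative $C^*$-algebra $C(bX)$, which is legitimate since every rational function continuous on $X$ restricts to $bX$ with the same supremum norm. Using $\sigma_T(\underline T)\subseteq X$, the assignment $\pi:A\RW B(\HS)$, $\pi(p/q)=p(\underline T)q(\underline T)^{-1}$, is a well-defined unital algebra homomorphism, and the complete spectral set condition says exactly that $\pi$ is completely contractive. The crucial algebraic step is the standard lemma (Paulsen) that a unital completely contractive homomorphism on a function algebra induces a unital completely positive map on the operator system it generates: one defines $\Phi:A+\overline A\RW B(\HS)$ by $\Phi(f+\overline g)=\pi(f)+\pi(g)^*$ and checks that $\Phi$ is well defined, unital and completely positive.

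Granting $\Phi$, the remainder is nearly formal. I would apply Arveson's extension theorem to extend $\Phi$ to a unital completely positive map $\widehat\Phi:C(bX)\RW B(\HS)$, and then Stinespring's dilation theorem to obtain a Hilbert space $\mathcal K$, a unital $*$-representation $\rho:C(bX)\RW B(\mathcal K)$ and an isometry $V:\HS\RW\mathcal K$ with $\widehat\Phi(g)=V^*\rho(g)V$ for every $g\in C(bX)$. Putting $N_i=\rho(z_i)$ for the coordinate functions $z_i$ yields commuting normal operators, and since a $*$-representation of $C(bX)$ has its joint spectrum supported on $bX$, one gets $\sigma_T(\underline N)\subseteq bX$. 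Finally, for $f\in\mathcal R(X)$ one reads off $f(\underline T)=\pi(f)=\widehat\Phi(f)=V^*\rho(f)V=V^*f(\underline N)V$, which is precisely a normal $bX$-dilation.

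The main obstacle is the middle lemma producing $\Phi$, which is the heart of Arveson's argument. Well-definedness requires that an identity $f+\overline g=f'+\overline{g'}$ on $bX$ forces $\pi(f)+\pi(g)^*=\pi(f')+\pi(g')^*$; this uses that on the $C^*$-subalgebra $A\cap\overline A$ the completely contractive unital homomorphism $\pi$ is automatically a $*$-homomorphism. Complete positivity must then be extracted from complete contractivity through a matrix-positivity argument exploiting the homomorphism property. Once this lemma is secured the extension and dilation theorems apply directly, and only routine care is needed to confirm that the normal tuple's joint spectrum lands in $bX$ and that the rational functional calculus is preserved at each stage.
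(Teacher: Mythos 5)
The paper offers no proof of this statement: it is quoted directly from Arveson's \emph{Subalgebras of $C^*$-algebras II} (Theorem 1.2.2 and its Corollary), so there is no internal argument to compare yours against. What you have written is, in substance, the original Arveson--Stinespring argument as it appears in that source and in Paulsen's book: the forward direction by compressing the continuous functional calculus of the commuting normal tuple (only the inequality $\sup_{\sigma_T(\underline N)}\|F\|\le\sup_{bX}\|F\|\le \|F\|$ is actually needed there, so your equality claim is harmless overkill), and the converse by promoting the unital completely contractive homomorphism $\pi$ on $\mathcal R(X)\subseteq C(bX)$ to a unital completely positive map on the operator system $\mathcal R(X)+\overline{\mathcal R(X)}$, then applying the Arveson extension theorem and Stinespring dilation. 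The outline is correct; the one step deserving full detail is exactly the one you flag, namely well-definedness and complete positivity of $\Phi(f+\overline g)=\pi(f)+\pi(g)^*$, which is Paulsen's Proposition 3.5 and is usually established by a $2\times 2$ matrix-positivity argument (your reduction of well-definedness to the fact that $\pi$ is a $*$-homomorphism on the $C^*$-algebra $A\cap\overline A$ also works, since a unital complete contraction on a $C^*$-algebra is automatically completely positive).
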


\subsection{Operator theory on the polydisc}
In this Subsection, we collect from the literature a few results associated with unitary and isometric dilation of a commuting tuple of contractions. These results will be used in sequel. In this context we mention that a contraction $T$ is called \textit{pure} or $C._0$, if ${T^*}^n \rightarrow 0$ strongly as $n \rightarrow \infty$. We first state the famous Berger-Coburn-Lebow theorem that models a pure commuting isometric tuple in terms of Topelitz operators on a certain vector-valued Hardy space.

\begin{thm}[Berger-Coburn-Lebow, \cite{Ber}] \label{BCL}
	Let $V_1, \dots , V_n$ be commuting isometries on $\HS$ such that $V=\prod_{i=1}^n V_i$ is a pure isometry. Then, there exist projections $P_1, \dots , P_n$ and unitaries $U_1, \dots , U_n$ in $\mathcal B(\mathcal D_{V^*})$ such that
\[
(V_1, \dots , V_n , V) \equiv (T_{U_1P_1^{\perp}+ zU_1P_1 }, \dots , T_{U_nP_n^{\perp}+zU_nP_n }, T_z)  \;\; \text{ on } \; \; H^2(\mathcal D_{V^*}).
\] 

\end{thm}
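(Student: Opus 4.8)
\section*{Proof proposal}

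The plan is to realize the product isometry $V$ concretely as a shift and then read off the remaining isometries as analytic Toeplitz operators whose symbols are forced to be linear in $z$. First I would invoke the Wold decomposition: since $V=\prod_{i=1}^n V_i$ is a pure isometry, $V$ is unitarily equivalent to the shift $T_z$ on $H^2(\mathcal D_{V^*})$, where the wandering subspace $\ker V^*=(\mathrm{Ran}\,V)^\perp$ is identified with the defect space $\mathcal D_{V^*}$; the implementing unitary $W:\HS\RW H^2(\mathcal D_{V^*})$ sends $V^k e\mapsto z^k e$ for $e\in\mathcal D_{V^*}$ and intertwines $V$ with $T_z$. Transporting the whole tuple through $W$, I may as well assume $\HS=H^2(\mathcal D_{V^*})$ and $V=T_z$, so that the task reduces to identifying the operators $V_i$.

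Next, since the $V_i$ commute with one another they commute with their product $V=T_z$. By the standard description of the commutant of the shift on a vector-valued Hardy space, every bounded operator commuting with $T_z$ is an analytic Toeplitz operator; hence $V_i=T_{\Phi_i}$ for some $\Phi_i\in H^\infty(\mathcal B(\mathcal D_{V^*}))$. Because analytic Toeplitz operators multiply as their symbols, the relation $\prod_{i=1}^n T_{\Phi_i}=T_z$ gives $\prod_{i=1}^n\Phi_i(z)=z\,I$. Each $V_i=T_{\Phi_i}$ being an isometry forces $\Phi_i(e^{i\theta})$ to be an isometry for a.e.\ $\theta$; and since a finite product of isometries can equal the unitary $e^{i\theta}I$ only if each factor is onto, every $\Phi_i(e^{i\theta})$ is in fact unitary a.e., i.e.\ each $\Phi_i$ is two-sided inner.

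The heart of the argument — and the step I expect to be the main obstacle — is to show that each symbol is \emph{linear} in $z$. Fix $i$ and set $\Theta=\prod_{j\neq i}\Phi_j$, which is again analytic and inner, so that $\Phi_i(z)\Theta(z)=z\,I$. On the unit circle $\Phi_i$ is unitary, whence $\Theta(e^{i\theta})=e^{i\theta}\Phi_i(e^{i\theta})^*$; writing $\Phi_i(z)=\sum_{k\ge 0}A_k z^k$ and demanding that the right-hand side carry no strictly negative Fourier modes — as it must, being the boundary value of the analytic function $\Theta$ — forces $A_k=0$ for all $k\ge 2$. Thus $\Phi_i(z)=A_0+zA_1$, a degree-one polynomial.

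Finally I would extract the projection and the unitary. Expanding the boundary identities $\Phi_i^*\Phi_i=I=\Phi_i\Phi_i^*$ for the linear symbol yields $A_0^*A_1=A_1^*A_0=0$, $A_0A_1^*=A_1A_0^*=0$ and $A_0^*A_0+A_1^*A_1=I$. These relations show that $U_i:=A_0+A_1$ is unitary and that $P_i:=A_1^*A_1$ is a projection with $P_i^\perp=A_0^*A_0$; a short computation using the vanishing cross-terms then gives $A_1=U_iP_i$ and $A_0=U_iP_i^\perp$, so that $\Phi_i(z)=U_iP_i^\perp+zU_iP_i$. Reassembling, $V_i$ is unitarily equivalent to $T_{U_iP_i^\perp+zU_iP_i}$ and $V$ to $T_z$ simultaneously through the single unitary $W$, which is the asserted model. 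The routine verifications I would leave implicit are the classical Wold-type identification in the first step and the elementary operator identities in the last.
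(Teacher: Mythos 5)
Your argument is correct and complete: the Wold identification of $V$ with $T_z$ on $H^2(\mathcal D_{V^*})$, the commutant-of-the-shift step giving $V_i=T_{\Phi_i}$, the Fourier-coefficient argument forcing $\Phi_i(z)=A_0+zA_1$, and the algebra extracting $U_i=A_0+A_1$ and $P_i=A_1^*A_1$ all check out. Note, however, that the paper offers no proof of this statement at all --- it is quoted verbatim as the Berger--Coburn--Lebow theorem from \cite{Ber} --- so there is nothing internal to compare against; what you have reconstructed is precisely the standard argument from the cited literature.
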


It is clear from the definitions of the previous Subsection that if $X=\ov{\D^n}$, then a commuting tuple $(T_1, \dots , T_n)$ has $\ov{\D^n}$ as a spectral set only if $T_1, \dots , T_n$ are commuting contractions. Also, a rational dilation for such a tuple is nothing but a (commuting) unitary dilation $(W_1, \dots , W_n)$. However, an isometric dilation for such a tuple can be defined in an analogous manner where unitaries are replaced by isometries only. Here we recall from the literature a few useful theorems associated with isometric and unitary dilations of commuting contractions. 
	
	\begin{thm} [\cite{S:Pal3}, Theorem 4.1] \label{puredil}
	Let $T_1,\ldots ,T_n$ be commuting contractions on a Hilbert space $\mathcal{H}$ such that their product $T=\prod_{i=1}^nT_i$ is a $C._0$ contraction. Then $(T_1,\ldots ,T_n)$ possesses an isometric dilation $(V_1,\ldots ,V_n)$ with $V=\prod_{i=1}^nV_i$ being a minimal isometric dilation of $T$ if and only if there are unique orthogonal projections $P_1,\ldots ,P_n$ and unique commuting unitaries $U_1,\ldots ,U_n$ in $\mathcal{B}(\mathcal{D}_{T^*})$ with $\prod_{i=1}^n U_i=I$ such that the following conditions hold for $i=1, \dots , n$ :
	\begin{enumerate}
		\item $D_{T^*}T_i^*=P_i^{\perp}U_i^*D_{T^*}+P_iU_i^*D_{T^*}T^*$,
		\item $P_i^{\perp}U_i^*P_j^{\perp}U_j^*=P_j^{\perp}U_j^*P_i^{\perp}U_i^*$ ,
		\item $U_iP_iU_jP_j=U_jP_jU_iP_i$ ,
		\item  $P_1+U_1^*P_2U_1+U_1^*U_2^*P_3U_2U_1+\ldots +U_1^*U_2^*\ldots U_{n-1}^*P_nU_{n-1}\ldots U_2U_1 =I_{\mathcal{D}_{T^*}}$. 
	\end{enumerate}
	Moreover, $(V_1^*, \dots , V_n^*)$ is a co-isometric extension of $(T_1^*, \dots , T_n^*)$ and $(V_1, \dots , V_n)$ is unitarily equivalent to the comuting Toeplitz operator tuple $(T_{U_1P_1^{\perp}+zU_1P_1}, \dots , T_{U_nP_n^{\perp}+zU_nP_n})$ acting on $H^2(\mathcal D_{T^*})$.

\end{thm}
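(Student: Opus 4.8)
The plan is to prove Theorem \ref{puredil}, which characterizes exactly when a commuting tuple $(T_1,\ldots,T_n)$ whose product $T=\prod_{i=1}^n T_i$ is pure admits an isometric dilation $(V_1,\ldots,V_n)$ whose product is the \emph{minimal} isometric dilation of $T$. The key structural fact I would exploit is that the minimal isometric dilation of a $C._0$ contraction $T$ is the pure isometry $T_z$ on the Hardy space $H^2(\mathcal D_{T^*})$, with $\mathcal H$ embedded via the canonical isometry $\mathcal O_{T^*}: \mathcal H \to H^2(\mathcal D_{T^*})$ given by $\mathcal O_{T^*}h = \sum_{k\geq 0} (z^k) D_{T^*}T^{*k}h$ (the Sz.-Nagy--Foias/characteristic-function model). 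Throughout, I would identify $\mathcal H$ with its image and treat $T_i^*$ as the compression of $V_i^*$.

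First I would establish the forward (necessity) direction. Assuming an isometric dilation $(V_1,\ldots,V_n)$ exists with $V=\prod V_i$ the minimal isometric dilation of $T$, the $V_i$ are commuting isometries on $H^2(\mathcal D_{T^*})$ with pure product $V=T_z$. By the Berger--Coburn--Lebow theorem (Theorem \ref{BCL}) there are projections $P_i$ and unitaries $U_i$ on $\mathcal D_{V^*}=\mathcal D_{T^*}$ with $V_i \equiv T_{U_iP_i^{\perp}+zU_iP_i}$; the constraint $\prod V_i = T_z$ forces the symbols to multiply to $z\,I$, which upon comparing the constant and linear Fourier coefficients yields $\prod_{i=1}^n U_i = I$ together with the ordering relations that become conditions (3) and the telescoping identity (4). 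Then (1) comes from writing $V_i^*$ explicitly on $H^2(\mathcal D_{T^*})$ and compressing to the subspace $\mathcal O_{T^*}\mathcal H$: since $T_i^* = \mathcal O_{T^*}^* V_i^* \mathcal O_{T^*}$ and the $V_i^*$ acts by the adjoint Toeplitz symbol, evaluating on the spanning vectors $D_{T^*}T^{*k}h$ produces precisely $D_{T^*}T_i^* = P_i^{\perp}U_i^* D_{T^*} + P_i U_i^* D_{T^*}T^*$. Condition (2) then follows because the commutativity $V_i^*V_j^* = V_j^*V_i^*$ of the adjoints, restricted to the zeroth Fourier level, is exactly the relation $P_i^{\perp}U_i^* P_j^{\perp}U_j^* = P_j^{\perp}U_j^* P_i^{\perp}U_i^*$. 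Uniqueness of the $P_i,U_i$ I would argue from the uniqueness of the minimal isometric dilation together with the fact that (1) determines $P_i^{\perp}U_i^*$ and $P_iU_i^*$ on the range of $D_{T^*}$, which spans $\mathcal D_{T^*}$.

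For the converse (sufficiency), given $P_i,U_i$ satisfying (1)--(4) I would \emph{define} candidate operators on $H^2(\mathcal D_{T^*})$ by $V_i := T_{U_iP_i^{\perp}+zU_iP_i}$ and verify three things: that each $V_i$ is an isometry (immediate, since $U_iP_i^{\perp}+zU_iP_i$ is inner because $U_i$ is unitary and $P_i$ a projection), that they commute (this is where conditions (2) and (3) enter, by matching Fourier coefficients in the product of two symbols), and that $\prod_{i=1}^n V_i = T_z$ (this is where (4) and $\prod U_i = I$ enter, via the telescoping of the symbol product to $z\,I$). Finally I would confirm that this $(V_1,\ldots,V_n)$ genuinely dilates $(T_1,\ldots,T_n)$ by checking $\mathcal O_{T^*}^* V_i^* \mathcal O_{T^*} = T_i^*$, which reduces on spanning vectors to identity (1); this simultaneously gives the co-isometric extension statement and, via minimality of $V=T_z$ over $\mathcal O_{T^*}\mathcal H$, the minimality clause.

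I expect the main obstacle to be the bookkeeping in the noncommutative symbol algebra: both the forward extraction of (2)--(4) from commutativity and product relations, and the converse verification, hinge on carefully expanding products of the operator-valued symbols $U_iP_i^{\perp}+zU_iP_i$ and reading off coefficients of $z^0,z^1,z^2$. The delicate point is that $P_i^{\perp}U_i^*$ and $P_iU_i^*$ play asymmetric roles (one is the ``constant'' action, the other the ``shift'' action of $V_i^*$), so the commutation relation (2) constrains only the constant parts while (3) constrains the shift parts, and (4) is the normalization forcing the product symbol to equal $z\,I$ rather than a general inner function. Keeping track of which relation governs which Fourier level, and confirming that no additional independent relations are needed, is the real content; the isometry and dilation properties themselves are comparatively routine once the model is fixed.
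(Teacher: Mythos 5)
This theorem is not proved in the paper at all: it is imported verbatim from \cite{S:Pal3} (Theorem 4.1 there) as background material in Section 2, so there is no in-document proof to compare against. Judged on its own, your outline follows what is essentially the only natural route and, as far as the strategy goes, the same one the cited source takes: model the minimal isometric dilation of the pure contraction $T$ as $T_z$ on $H^2(\mathcal D_{T^*})$ via $\mathcal O_{T^*}$, invoke Berger--Coburn--Lebow for the commuting isometries $V_1,\dots,V_n$ with pure product, and read the conditions off the symbols. Your derivation of condition (1) by comparing zeroth Fourier coefficients of $V_i^*\mathcal O_{T^*}h$ and $\mathcal O_{T^*}T_i^*h$ is right, and the observation that the $k$-th coefficient identity is just (1) applied to $T^{*k}h$ makes the sufficiency of (1) for the co-isometric extension work.

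There are two genuine gaps. First, in the converse direction you define $V_i=T_{\varphi_i}$ with $\varphi_i=U_iP_i^{\perp}+zU_iP_i$ and assert that (2) and (3) give commutativity ``by matching Fourier coefficients.'' Writing $A_i=U_iP_i^{\perp}$, $B_i=U_iP_i$, the identity $\varphi_i\varphi_j=\varphi_j\varphi_i$ requires three coefficient equations: $A_iA_j=A_jA_i$ (the adjoint of (2)), $B_iB_j=B_jB_i$ (condition (3)), \emph{and} the cross term $A_iB_j+B_iA_j=A_jB_i+B_jA_i$, which is not among the hypotheses. You correctly flag that ``confirming that no additional independent relations are needed is the real content,'' but you never supply the argument that (4) and $\prod U_i=I$ force this cross-term identity and the telescoping $\prod\varphi_i=zI$; this is precisely the hard combinatorial core of the theorem (it is the substance of the sharpened BCL-type result the source relies on), and a proof that stops at ``keeping track of which relation governs which Fourier level'' has not actually closed it. Second, your uniqueness argument does not work as stated: condition (1) is a single identity in which $P_i^{\perp}U_i^*$ and $P_iU_i^*$ appear only in the combination $P_i^{\perp}U_i^*D_{T^*}+P_iU_i^*D_{T^*}T^*$, so it cannot ``determine $P_i^{\perp}U_i^*$ and $P_iU_i^*$ on the range of $D_{T^*}$'' separately. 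Uniqueness has to come instead from recovering $U_iP_i^{\perp}$ and $U_iP_i$ directly from $V_i$ and $V$ on the wandering subspace $\mathcal D_{V^*}\cong\mathcal D_{T^*}$ (e.g.\ as compressions of $V_i$ and $V^*V_i$ to $\ker V^*$), combined with the uniqueness of the minimal isometric dilation up to a unitary fixing $\mathcal H$.
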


\begin{thm}  [\cite{S:Pal4}, Theorem 4.3]\label{Uni-puredil}
	Let $T_1,\ldots ,T_n$ be commuting contractions on a Hilbert space $\mathcal{H}$ such that their product $T=\Pi_{i=1}^nT_i$ is a $C._0$ contraction. Then $(T_1,\ldots ,T_n)$ possesses a unitary dilation $(W_1,\ldots ,W_n)$ with $W=\prod_{i=1}^nW_i$ being a minimal unitary dilation of $T$ if and only if there are unique orthogonal projections $P_1,\ldots ,P_n$ and unique commuting unitaries $U_1,\ldots ,U_n$ in $\mathcal{B}(\mathcal{D}_{T^*})$ with $\prod_{i=1}^n U_i=I_{\mathcal D_{T^*}}$ such that the following hold for $i=1, \dots , n$ :
	\begin{enumerate}
		\item $D_{T^*}T_i^*=P_i^{\perp}U_i^*D_{T^*}+P_iU_i^*D_{T^*}T^*$,
		\item $P_i^{\perp}U_i^*P_j^{\perp}U_j^*=P_j^{\perp}U_j^*P_i^{\perp}U_i^*$ ,
		\item $U_iP_iU_jP_j=U_jP_jU_iP_i$ ,
		\item  $P_1+U_1^*P_2U_1+U_1^*U_2^*P_3U_2U_1+\ldots +U_1^*U_2^*\ldots U_{n-1}^*P_nU_{n-1}\ldots U_2U_1 =I_{\mathcal{D}_{T^*}}$. 
	\end{enumerate} 
	Moreover, $(W_1, \dots , W_n)$ is unitarily equivalent to $(M_{U_1P_1^{\perp}+zU_1P_1}, \dots , M_{U_nP_n^{\perp}+zU_nP_n})$ acting on $L^2(\mathcal D_{T^*})$.
\end{thm}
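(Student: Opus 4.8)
The plan is to deduce this unitary-dilation statement from its isometric analogue, Theorem \ref{puredil}, by exploiting the canonical correspondence between the minimal isometric dilation $T_z$ on $H^2(\mathcal D_{T^*})$ and the minimal unitary dilation $M_z$ on $L^2(\mathcal D_{T^*})$ of the pure contraction $T$. The two theorems carry the identical algebraic data $(1)$--$(4)$ and the same symbols $\Phi_i(z)=U_iP_i^{\perp}+zU_iP_i=U_i(P_i^{\perp}+zP_i)$; the guiding idea is that the analytic Toeplitz model $V_i=T_{\Phi_i}$ on $H^2(\mathcal D_{T^*})$ and the Laurent model $W_i=M_{\Phi_i}$ on $L^2(\mathcal D_{T^*})$ are two realizations of the same symbols. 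I shall use throughout that, $T$ being $C._0$, the space $H^2(\mathcal D_{T^*})$ sits inside $L^2(\mathcal D_{T^*})$ as the non-negative Fourier modes, that $T_z=M_z|_{H^2(\mathcal D_{T^*})}$, and that $\mathcal H$ embeds as the Sz.-Nagy--Foias model space inside $H^2(\mathcal D_{T^*})$.

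For sufficiency, assume $(1)$--$(4)$. Theorem \ref{puredil} produces the commuting isometric dilation $(V_1,\dots,V_n)\cong(T_{\Phi_1},\dots,T_{\Phi_n})$ with $V=\prod_{i}V_i=T_z$ the minimal isometric dilation of $T$, and I extend each $V_i$ to the Laurent operator $W_i=M_{\Phi_i}$ on $L^2(\mathcal D_{T^*})$. First, $W_i$ is unitary, because on $\mathbb T$ the symbol $\Phi_i(z)=U_i(P_i^{\perp}+zP_i)$ is the product of the unitary $U_i$ with $P_i^{\perp}+zP_i$, which is unitary when $|z|=1$ as $P_i,P_i^{\perp}$ are complementary projections. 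Second, the $W_i$ commute, since the commuting operators $V_i=T_{\Phi_i}$ force $\Phi_i\Phi_j=\Phi_j\Phi_i$ pointwise and hence the Laurent operators commute. Third, $\prod_{i}W_i=M_{\prod_i\Phi_i}=M_{zI}=M_z$, because $\prod_iT_{\Phi_i}=T_z$ gives $\prod_i\Phi_i(z)=zI$; thus $W=\prod_iW_i$ is the minimal unitary dilation $M_z$ of $T$. The dilation identity is then clean: each $\Phi_i$ is analytic, so $H^2(\mathcal D_{T^*})$ is $W_i$-invariant with $W_i|_{H^2(\mathcal D_{T^*})}=V_i$; hence for $h\in\mathcal H$ a non-negative monomial $W_1^{m_1}\cdots W_n^{m_n}h$ stays in $H^2(\mathcal D_{T^*})$ and equals $V_1^{m_1}\cdots V_n^{m_n}h$, and compressing to $\mathcal H$ yields $T_1^{m_1}\cdots T_n^{m_n}h$. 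Summation over monomials gives $p(T)=P_{\mathcal H}\,p(W_1,\dots,W_n)|_{\mathcal H}$ for polynomials $p$, whence the functional calculus over $\mathcal R(\overline{\D^n})$, and the unitary equivalence with $(M_{\Phi_1},\dots,M_{\Phi_n})$ is built into the construction.

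For necessity, suppose $(T_1,\dots,T_n)$ has a unitary dilation $(W_1,\dots,W_n)$ with $W=\prod_iW_i$ the minimal unitary dilation of $T$. As $T$ is $C._0$, I identify $W=M_z$ on $\mathcal K=L^2(\mathcal D_{T^*})$, with $\mathcal K_+=H^2(\mathcal D_{T^*})$ and $W|_{\mathcal K_+}=T_z$ the minimal isometric dilation of $T$. Each $W_i$ is unitary and commutes with $M_z$, so it is a Laurent operator $W_i=M_{\Psi_i}$ with a unitary-valued symbol $\Psi_i\in L^{\infty}(\mathbb T;\mathcal B(\mathcal D_{T^*}))$ satisfying $\prod_i\Psi_i(z)=zI$. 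Granting that each $W_i$ leaves $\mathcal K_+$ invariant, the operators $V_i:=W_i|_{\mathcal K_+}$ are commuting isometries on $H^2(\mathcal D_{T^*})$ whose product is the minimal isometric dilation $T_z$, and exactly as above $(V_1,\dots,V_n)$ is an isometric dilation of $(T_1,\dots,T_n)$. Theorem \ref{puredil} then supplies projections $P_i$ and commuting unitaries $U_i$ obeying $(1)$--$(4)$ with $V_i=T_{\Phi_i}$, forcing $W_i=M_{\Phi_i}$; the uniqueness of $P_i,U_i$ and the ``Moreover'' assertion descend directly from the corresponding parts of Theorem \ref{puredil}.

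The main obstacle is exactly the invariance of $\mathcal K_+=H^2(\mathcal D_{T^*})$ under each $W_i$, equivalently the analyticity of each symbol $\Psi_i$. A general unitary dilation of the tuple need not respect the splitting $L^2(\mathcal D_{T^*})=H^2(\mathcal D_{T^*})\oplus H^2(\mathcal D_{T^*})^{\perp}$; what must be leveraged is the hypothesis that the product $W=\prod_iW_i$ is the \emph{minimal} unitary dilation of $T$, so that $\mathcal K_+=\bigvee_{m\geq0}W^m\mathcal H$ is already the smallest $W$-invariant subspace containing $\mathcal H$. I would establish $W_i\mathcal K_+\subseteq\mathcal K_+$ through the complementary inclusion $W_i^*\mathcal K_+^{\perp}\subseteq\mathcal K_+^{\perp}$, using that $\mathcal H$ is the Sz.-Nagy--Foias model space co-invariant under $W^*=M_{\bar z}$ together with the dilation identities $P_{\mathcal H}W_1^{m_1}\cdots W_n^{m_n}|_{\mathcal H}=T_1^{m_1}\cdots T_n^{m_n}$ to exclude negative Fourier modes from $W_i\mathcal H$. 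Once the symbols are shown to be analytic, the remainder of the necessity direction is a transcription of Theorem \ref{puredil}.
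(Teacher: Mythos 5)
This statement is imported verbatim from [S:Pal4, Theorem 4.3]; the paper you are reading gives no proof of it, so there is nothing in-paper to compare your argument with, and your proposal has to stand on its own. Your sufficiency direction does: starting from conditions $(1)$--$(4)$, Theorem \ref{puredil} yields the Toeplitz model $(T_{\Phi_1},\dots,T_{\Phi_n})$ on $H^2(\mathcal D_{T^*})$, the Laurent extensions $M_{\Phi_i}$ on $L^2(\mathcal D_{T^*})$ are commuting unitaries with product $M_z$ (the minimal unitary dilation of the $C._0$ contraction $T$), and since each $\Phi_i$ is a polynomial symbol, $H^2(\mathcal D_{T^*})$ is invariant and the compression of non-negative monomials to $\mathcal H$ factors through the isometric dilation. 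That half, together with the ``Moreover'' clause, is complete.

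The necessity direction, however, contains a genuine gap exactly where you flag it, and your proposed repair does not close it. A unitary $W_i$ commuting with the bilateral shift $W=M_z$ is a Laurent operator $M_{\Psi_i}$ with $\Psi_i\in L^\infty(\T;\mathcal B(\mathcal D_{T^*}))$, and nothing in the hypotheses forces $\Psi_i$ to be analytic a priori. Your plan is to ``exclude negative Fourier modes from $W_i\mathcal H$'' using the dilation identities, but those identities only give you $\langle W_1^{m_1}\cdots W_n^{m_n}h,h'\rangle$ for $h,h'\in\mathcal H$ and $m_j\ge 0$ (and, by adjoints, for $m_j\le 0$). To show $W_ih\perp \mathcal K_+^{\perp}$ you must pair $W_ih$ against $L^2(\mathcal D_{T^*})\ominus H^2(\mathcal D_{T^*})$, whose natural spanning vectors are \emph{not} of the form $W^{-k}h'$ (those have nonzero components in $\mathcal K_+$ as well, since $W^*$ does not restrict to $T^*$ on $\mathcal H$ for the unitary dilation). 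Writing them out requires the explicit identification $\mathcal K_+\cong H^2(\mathcal D_{T^*})$ and a Gram-matrix computation showing $\|P_{\mathcal K_+}W_ih\|=\|h\|$; that computation is essentially the technical heart of the theorem and is not a formality. The space that \emph{is} automatically invariant under every $W_i$ is the larger $\widetilde{\mathcal K}_+=\bigvee_{m_1,\dots,m_n\ge 0}W_1^{m_1}\cdots W_n^{m_n}\mathcal H$, and restricting there does produce a commuting isometric dilation of $(T_1,\dots,T_n)$; but its product $W|_{\widetilde{\mathcal K}_+}$ is the \emph{minimal} isometric dilation of $T$ only if $\widetilde{\mathcal K}_+=\bigvee_{m\ge 0}W^m\mathcal H$, which is precisely the invariance claim you are trying to prove. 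So Theorem \ref{puredil} cannot be invoked until that identity is established, and as written your argument for it is a restatement of the goal rather than a proof. Until this step is supplied, the necessity direction (including the uniqueness assertion and the ``Moreover'' clause, both of which you route through Theorem \ref{puredil}) remains open.
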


\begin{thm} [\cite{S:Pal4}, Theorem 3.8] \label{Unimain}
	
	Let $T_1,\ldots,T_n\in \mathcal{B}(\mathcal{H})$ be commuting contractions, $T=\Pi_{j=1}^nT_j$ and $T_i'=\Pi_{i\neq j} T_j$ for $1\leq i \leq n$. 
	\begin{itemize}
		\item[(a)] If $\widetilde{\mathcal{K}}$ is the minimal unitary dilation space of $T$, then $(T_1,\ldots ,T_n)$ possesses a unitary dilation $(W_1,\ldots,W_{n})$ on $\widetilde{\mathcal{K}}$ with $W=\prod_{i=1}^{n}W_i$ being the minimal unitary dilation of $T$ if and only if there exist unique projections $P_1,\ldots ,P_n$ and unique commuting unitaries $U_1,\ldots ,U_n$ in $\mathcal{B}(\mathcal{D}_T)$ with $\prod_{i=1}^n U_i=I$ such that the following hold for $i=1, \dots, n:$
		\begin{enumerate} 
			\item $D_TT_i=P_i^{\perp}U_i^*D_T+P_iU_i^*D_TT$ ,
			\item  $P_i^{\perp}U_i^*P_j^{\perp}U_j^*=P_j^{\perp}U_j^*P_i^{\perp}U_i^*$ ,
			\item $U_iP_iU_jP_j=U_jP_jU_iP_i$ ,
			\item $D_TU_iP_iU_i^*D_T=D_{T_i}^2$ ,
			\item  $P_1+U_1^*P_2U_1+U_1^*U_2^*P_3U_2U_1+\ldots +U_1^*U_2^*\ldots U_{n-1}^*P_nU_{n-1}\ldots U_2U_1 =I_{\mathcal{D}_{T}}$.
			
		\end{enumerate} 
		Moreover, such a unitary dilation is minimal and the Hilbert space $\widetilde{\mathcal K}$ is the smallest in the sense that if $\widetilde{\mathcal H}$ is a closed linear subspace of $\widetilde{\mathcal K}$ such that $\widetilde{\mathcal H}$ is not isomorphic to $\widetilde{\mathcal K}$, then $(T_1, \dots, T_n)$ cannot possess such a unitary dilation on $\widetilde{\mathcal H}$.
		
		\item[(b)] If $(X_1, \dots , X_n)$ on $\widetilde{\mathcal K}_1$ and $(Z_1, \dots , Z_n)$ on $\widetilde{\mathcal K}_2$ are two unitary dilations of $(T_1, \dots , T_n)$ such that $X=\prod_{i=1}^nX_i$ and $Z=\prod_{i=1}^nZ_i$ are minimal unitary dilations of $T$ on $\mathcal K_1$ and $\mathcal K_2$ respectively, then there is a unitary $\widetilde{W}:\widetilde{\mathcal K}_1 \rightarrow \widetilde{\mathcal K}_2$ such that $(X_1, \dots , X_n)=(\widetilde{W}^*Z_1\widetilde{W}, \dots , \widetilde{W}^*Z_n\widetilde{W})$.
		
	\end{itemize}
\end{thm}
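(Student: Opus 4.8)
The plan is to reduce the whole statement to the (essentially unique) minimal unitary dilation $W$ of the single product contraction $T=\prod_{i=1}^nT_i$, and to read off the data $(P_i,U_i)$ from the way commuting dilating unitaries $W_i$ act on the defect space $\mathcal{D}_T$. Concretely, I would fix once and for all an explicit Sch\"affer/Sz.-Nagy--Foias realization of the minimal unitary dilation $W$ on $\widetilde{\mathcal{K}}$, in which $\mathcal{H}$ is embedded so that $P_{\mathcal{H}}W^m|_{\mathcal{H}}=T^m$ and $\widetilde{\mathcal{K}}=\overline{\operatorname{span}}\{W^m h: m\in\mathbb{Z},\, h\in\mathcal{H}\}$; in this model $W$ is a bilateral-shift-type operator whose wandering data is governed by $\mathcal{D}_T$ (with the residual part controlled by $\mathcal{D}_{T^*}$). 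The theorem then asserts that a factorisation $W=\prod_iW_i$ into commuting unitaries dilating the $T_i$ exists exactly when the symbol data on $\mathcal{D}_T$ satisfies (1)--(5). This is the general (non-pure) companion of the $C._0$ statement in Theorem \ref{Uni-puredil}, and the symbol form $U_iP_i^{\perp}+zU_iP_i$ is the one supplied by the Berger--Coburn--Lebow model of Theorem \ref{BCL}.

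For necessity, suppose $(W_1,\dots,W_n)$ is a commuting unitary dilation with $W=\prod_iW_i$ minimal. Since each $W_i$ commutes with $W$ and dilates $T_i$, I would compress the defining relations to $\mathcal{H}$ and to the first copy of the wandering space to extract, for each $i$, a projection $P_i$ and a unitary $U_i$ on $\mathcal{D}_T$; these are the fundamental parameters realising $W_i$ as multiplication by the unitary-valued symbol $U_iP_i^{\perp}+zU_iP_i$. Relation (1) is the compression identity $P_{\mathcal{H}}W_i|_{\mathcal{H}}=T_i$ read at the level of $D_T$; relations (2) and (3) are the two algebraic identities obtained by comparing the coefficients in the product of the two degree-one symbols forced by $W_iW_j=W_jW_i$; relation (5) is the telescoping identity equivalent to the symbols multiplying to $zI$, i.e. $\prod_iW_i=W$. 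The identity (4), $D_TU_iP_iU_i^{*}D_T=D_{T_i}^2$, records that the unitary $W_i$ carries $\mathcal{H}$ off $\mathcal{H}$ by exactly the amount measured by the defect of $T_i$: from $\|W_ih\|^2=\|h\|^2$ one gets $\langle(I-T_i^{*}T_i)h,h\rangle=\|(I-P_{\mathcal{H}})W_ih\|^2$, and the right-hand norm is precisely $\|P_iU_i^{*}D_Th\|^2$ in the model. This last identity is the feature absent from the pure case of Theorem \ref{Uni-puredil}, and I expect verifying it to be the crux.

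For sufficiency I would run the construction in reverse: given $(P_i,U_i)$ satisfying (1)--(5), define operators $W_i$ on $\widetilde{\mathcal{K}}$ as multiplication by $U_iP_i^{\perp}+zU_iP_i$ on the $\mathcal{D}_T$-wandering part, glued to the action on $\mathcal{H}$ so that (1) makes the pieces compatible across the decomposition. Then (2)--(3) yield $W_iW_j=W_jW_i$; the unitarity of each symbol on $\mathbb{T}$ together with (4) yields that each $W_i$ is genuinely unitary (not merely isometric) on all of $\widetilde{\mathcal{K}}$; (5) yields $\prod_iW_i=W$; and the embedding of $\mathcal{H}$ together with (1) yields $P_{\mathcal{H}}\,p(W_1,\dots,W_n)|_{\mathcal{H}}=p(T_1,\dots,T_n)$ for every polynomial $p$, i.e. the dilation property. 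Minimality of the tuple dilation and the ``smallest space'' clause are inherited from the minimality of $W$, since any honest such dilation must at least carry the minimal unitary dilation of $T$.

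Finally, for part (b), given two minimal dilations over $\widetilde{\mathcal{K}}_1$ and $\widetilde{\mathcal{K}}_2$ with products $X,Z$ being minimal unitary dilations of $T$, the uniqueness of the minimal unitary dilation of a single contraction produces a unitary $\widetilde{W}:\widetilde{\mathcal{K}}_1\to\widetilde{\mathcal{K}}_2$ fixing $\mathcal{H}$ and intertwining $X$ with $Z$; because each factor $X_i,Z_i$ is determined on the respective wandering space by the same recovered data $(P_i,U_i)$, the same $\widetilde{W}$ intertwines $X_i$ with $Z_i$. The main obstacle throughout is controlling the non-pure (residual, unitary) part of $W$ that is invisible in the $C._0$ setting of Theorem \ref{Uni-puredil}: it is precisely there that condition (4) must be shown to be both necessary and sufficient for the constructed $W_i$ to remain unitary, and that is where I would concentrate the technical effort.
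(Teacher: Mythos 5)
This theorem is not proved in the paper at all: it is imported verbatim from \cite{S:Pal4} (Theorem 3.8 there), so there is no internal argument to measure your proposal against. Judged on its own terms, your sketch follows the right philosophy --- a Sch\"affer-type model for the minimal unitary dilation $W$ of $T$, with the commuting factors $W_i$ encoded by Berger--Coburn--Lebow symbols $U_iP_i^{\perp}+zU_iP_i$ on $\mathcal D_T$, in the spirit of Theorems \ref{BCL}, \ref{puredil} and \ref{Uni-puredil} and of the isometric analogue on the minimal isometric dilation space --- but it leaves genuine gaps at exactly the places you yourself flag as the crux, so it is a plan rather than a proof.

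Concretely, three steps are missing. First, in the necessity direction you assert that a unitary $W_i$ commuting with $W$ and compressing to $T_i$ ``is realised as multiplication by the symbol $U_iP_i^{\perp}+zU_iP_i$'' on the wandering part. That is the content of Theorem \ref{BCL}, which applies only when the ambient product isometry is \emph{pure}; on the full two-sided space $\widetilde{\mathcal K}$ of a general (non-$C._0$) contraction, $W$ has a nontrivial residual summand, and an operator commuting with $W$ is not determined by data on $\mathcal D_T$ alone. Establishing that the $W_i$ nevertheless compress coherently to BCL data on $\mathcal D_T$ --- equivalently, reducing to the isometric statement on $\overline{\operatorname{span}}\{W^mh : m\ge 0,\ h\in\mathcal H\}$ and then controlling the residual part --- is the actual work, and your proposal defers it rather than supplies it. Second, in the sufficiency direction, the dilation property $P_{\mathcal H}\,p(W_1,\dots,W_n)|_{\mathcal H}=p(T_1,\dots,T_n)$ for \emph{all} polynomials $p$ does not follow from the first-order compressions $P_{\mathcal H}W_i|_{\mathcal H}=T_i$; one needs $\mathcal H$ to be semi-invariant for the tuple (in practice, that the constructed isometric part is a co-isometric extension of $(T_1^*,\dots,T_n^*)$, as recorded in Theorem \ref{puredil}), and your ``gluing'' step does not establish this. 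Third, the theorem asserts \emph{uniqueness} of the $P_i$ and $U_i$, which your argument never addresses; note that part (b) of your sketch silently relies on this uniqueness and on the missing structural step, so it inherits both gaps. Your heuristic for condition (4), namely $\|h\|^2=\|T_ih\|^2+\|(I-P_{\mathcal H})W_ih\|^2$ with $(I-P_{\mathcal H})W_ih=P_iU_i^*D_Th$, is the right idea, but it presupposes the explicit block form of $W_i|_{\mathcal H}$ that the absent structural argument was supposed to deliver.
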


\vspace{0.2cm}

\section{Distinguished sets and distinguished varieties in the polydisc}

\vspace{0.3cm}

\noindent Recall that a distinguished set $\Omega$ in a domain $G\subset \mathbb C^n$ is the intersection of $G$ with an algebraic set $W$ in the affine space $\mathbb A^n_{\mathbb C}$ such that the complex-dimension of $W$ is greater than zero and that $W$ exits the domain $G$ through the distinguished boundary $b\overline{G}$ of $G$ without intersecting any other part of the topological boundary $\partial \overline{G}$ of $G$. Also, a distinguished variety in $G$ is a distinguished set determined by an irreducible algebraic set (i.e. an algebraic variety) $W$.

In this Section, we present two different representations of a distinguished set in the polydisc $\D^n$. The first one represents a distinguished variety in $\D^n$ which is a set-theoretic complete intersection and is obtained via a representation of a distinguished variety in the symmetrized polydisc $\mathbb G_n$. The second representation is an independent characterization, where the generating set consists of $n$ polynomials and hence is not a set-theoretic complete intersection. The common factor among these two representations is that both are obtained in terms of Taylor joint spectrum of linear matrix pencils.

\subsection{The first representation} We begin with a few useful facts from basic algebraic geometry.

\begin{defn}
 An \textit{affine algebraic curve} or simply an \textit{algebraic curve} in the affine space $\mathbb A^n_{\mathbb C}$ is an algebraic set that has dimension one as a complex manifold.
\end{defn}

\begin{defn}
An affine algebraic set or variety $V$ of dimension $k \,(\leq n)$ in $\mathbb A_{\mathbb C}^n$ is called a \textit{set-theoretic complete intersection} or simply a \textit{complete intersection} if $V$ is defined by $n-k$ independent polynomials in $\mathbb C[z_1.\dots, z_n]$. Thus an algebraic curve $C$ in $\mathbb A^n_{\mathbb C}$ is a complete intersection if $C$ is defined by $n-1$ independent polynomials in $\mathbb C[z_1,\dots, z_n]$.
\end{defn}
Note that in general set-theoretic complete intersection and complete intersection are different, but here we deal only with Zariski closed sets and thus here the phrase 'complete intersection' will mean the set-theoretic complete intersection.

\begin{thm}[\cite{Ro:H}, CH-I, Corollary 1.6]
Every algebraic set in $\mathbb A_{\mathbb C}^n$ can be uniquely expressed as a union of affine algebraic varieties, no one containing another.
\end{thm}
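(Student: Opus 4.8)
The plan is to prove this via the standard two-step strategy for decomposing a Noetherian topological space: first establish the existence of a finite irredundant decomposition, and then establish its uniqueness. The crucial structural input I would invoke is that $\mathbb A_{\mathbb C}^n$, equipped with the Zariski topology whose closed sets are exactly the algebraic sets $Z(S)$, is a \emph{Noetherian} topological space. This follows from the Hilbert Basis Theorem: since $\mathbb C[z_1,\dots,z_n]$ is Noetherian, every ascending chain of ideals terminates, and passing through the order-reversing correspondence between ideals and their zero sets, every descending chain of algebraic sets must stabilize. Equivalently, every nonempty collection of algebraic sets possesses a minimal element, and this minimal condition is the engine of the argument.

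For existence, I would argue by Noetherian induction in contrapositive form. Suppose the family $\Sigma$ of algebraic sets that \emph{cannot} be written as a finite union of algebraic varieties is nonempty; then by the minimal condition it contains a minimal element $Y$. Such a $Y$ cannot itself be irreducible, for otherwise it would trivially be a union of one variety; hence it splits as $Y = Y_1 \cup Y_2$ with $Y_1, Y_2$ proper algebraic subsets of $Y$. Minimality of $Y$ forces each $Y_i \notin \Sigma$, so each is a finite union of varieties, and therefore so is $Y$ itself, contradicting $Y \in \Sigma$. Thus every algebraic set admits a representation $W = W_1 \cup \dots \cup W_r$ as a finite union of varieties. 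Discarding any $W_i$ that happens to be contained in some $W_j$ with $i \neq j$ yields an \emph{irredundant} decomposition in which no component contains another.

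For uniqueness, suppose $W = W_1 \cup \dots \cup W_r = W_1' \cup \dots \cup W_s'$ are two irredundant decompositions into varieties. For each $i$ I would write $W_i = \bigcup_{j}(W_i \cap W_j')$; since $W_i$ is irreducible and this displays it as a finite union of closed subsets, one of these subsets must equal $W_i$, giving $W_i \subseteq W_j'$ for some $j$. Applying the same reasoning to $W_j'$ yields $W_j' \subseteq W_k$ for some $k$, so that $W_i \subseteq W_j' \subseteq W_k$. Irredundancy of the first decomposition then forces $i = k$, whence $W_i = W_j'$. This sets up a bijection between the two families of components and proves uniqueness up to reordering.

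The conceptual heart of the argument, and the only place that demands genuine care, is the existence step: one must be certain that the splitting process terminates after finitely many stages, and this is precisely where the Noetherian property is indispensable, since without it one could envisage an infinite regress of proper decompositions. The uniqueness step, by contrast, rests entirely on the elementary observation that an irreducible set cannot be covered by finitely many proper closed subsets, which I would deploy twice, once in each direction, to pin down the bijection between components.
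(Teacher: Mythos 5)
Your proof is correct and is precisely the standard argument from the cited reference (Hartshorne, Ch.~I, Prop.~1.5 and Cor.~1.6): Noetherian induction for existence of a finite irreducible decomposition, followed by the irredundancy/irreducibility argument for uniqueness. The paper itself offers no proof, merely citing the result, so there is nothing further to compare.
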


\begin{lem} [\cite{S:Pal2}, Lemma 3.4]  \label{lem:ag1}

Let $C$ be an affine algebraic curve in $\mathbb A_{\mathbb C}^n$ such that $V_b=C\cap V(z_n - b)$ contains finitely many points for any $b\in\mathbb C$, where $V(z_n-b)$ is the variety generated by $z_n-b \in \mathbb C[z_1,\dots, z_n]$. Then there exists a positive integer $\widetilde k$ such that $\# (V_b)\leq \widetilde k$ for any $b\in \mathbb C$, where $\# (V_b)$ is the cardinality of the set $V_b$.
\end{lem}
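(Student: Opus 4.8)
The plan is to reduce to an irreducible curve and then bound each fibre coordinate-by-coordinate using a single polynomial relation between $z_j$ and $z_n$.

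First I would decompose $C = C_1 \cup \dots \cup C_m$ into its finitely many irreducible components (the unique decomposition recorded in the theorem quoted above). A uniform bound on $\#(V_b \cap C_i)$ for each $i$ yields one for $C$ by summation, so it suffices to treat a single irreducible component; a component of dimension $0$ is a point and contributes at most $1$ to every $V_b$, so I may assume $C_i$ is an irreducible curve of dimension $1$. The hypothesis that every $V_b$ is finite forbids $C_i$ from being vertical: were $C_i \subseteq V(z_n - c)$ for some $c \in \mathbb{C}$, then $V_c \supseteq C_i$ would be infinite. Hence $z_n$ is non-constant on $C_i$, i.e.\ transcendental over $\mathbb{C}$ in the function field $\mathbb{C}(C_i)$; since $\dim C_i = 1$, this field has transcendence degree one over $\mathbb{C}$, so $\{z_n\}$ is a transcendence basis and $\mathbb{C}(C_i)$ is algebraic over $\mathbb{C}(z_n)$.

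The key step is then to extract, for each $j \in \{1, \dots, n-1\}$, a polynomial relation between $z_j$ and $z_n$. Since $z_j$ is algebraic over $\mathbb{C}(z_n)$, it has a minimal polynomial there; clearing denominators and dividing out the content, I obtain a primitive polynomial $q_j(z_j, z_n) = \sum_{l=0}^{d_j} c_l(z_n) z_j^l \in \mathbb{C}[z_n][z_j]$, with $d_j = \deg_{z_j} q_j \geq 1$, that vanishes identically on $C_i$. Primitivity means $\gcd_l c_l = 1$ in $\mathbb{C}[z_n]$, so there is no $b$ at which all the $c_l(b)$ vanish simultaneously; consequently $q_j(\,\cdot\,, b)$ is, for every $b \in \mathbb{C}$, a nonzero polynomial in $z_j$ of degree at most $d_j$. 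Now any point $(a_1, \dots, a_{n-1}, b) \in V_b \cap C_i$ satisfies $q_j(a_j, b) = 0$, so each $a_j$ lies in the root set of $q_j(\,\cdot\,, b)$, a set of at most $d_j$ elements. A point of $V_b \cap C_i$ is therefore determined by its first $n-1$ coordinates, each drawn from one of these finite root sets, whence $\#(V_b \cap C_i) \leq \prod_{j=1}^{n-1} d_j$ with the bound independent of $b$. Summing over components gives $\widetilde{k} = \sum_{i=1}^m \prod_{j=1}^{n-1} d_{i,j}$.

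The one delicate point is the uniformity over \emph{all} $b$, in particular the finitely many values where the leading coefficient $c_{d_j}(z_n)$ vanishes and the projection to the $z_n$-axis degenerates. Insisting that $q_j$ be primitive in $\mathbb{C}[z_n][z_j]$ is exactly what prevents the fibre polynomial from collapsing (it guarantees $q_j(\,\cdot\,, b) \not\equiv 0$ for every $b$) and thereby upgrades a merely generic degree bound into one valid for every single $b$. Absent this normalization one would instead isolate the exceptional fibres and take a maximum with their cardinalities, which are finite by hypothesis; either route yields the required uniform $\widetilde{k}$.
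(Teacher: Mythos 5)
Your argument is correct and complete. Note, however, that the paper does not prove this lemma at all: it is imported verbatim from \cite{S:Pal2} (Lemma 3.4 there) and used as a black box, so there is no in-paper proof to measure your attempt against. Judged on its own terms, your proof is sound: the reduction to irreducible components is legitimate (zero-dimensional components contribute at most one point each, and a one-dimensional component cannot lie in a fibre $V(z_n-b)$ without violating the finiteness hypothesis); the passage to the function field $\mathbb C(C_i)$, algebraic of transcendence degree one over $\mathbb C(z_n)$, is standard; and the primitivity normalization of $q_j\in\mathbb C[z_n][z_j]$ is exactly the right device to guarantee $q_j(\cdot,b)\not\equiv 0$ for \emph{every} $b$, which is what converts a generic degree bound into the uniform bound $\#(V_b\cap C_i)\le \prod_{j=1}^{n-1}d_j$. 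The only point worth making explicit is why the primitive part still vanishes on $C_i$ after dividing out the content $g\in\mathbb C[z_n]$: since $z_n$ is transcendental on $C_i$, the image of $g$ in the integral domain $\mathbb C[C_i]$ is nonzero, so $g\cdot\tilde q_j=0$ forces $\tilde q_j=0$ there. With that sentence added, the proof stands as a self-contained replacement for the external citation.
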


\begin{lem} [\cite{S:Pal2},, Lemma 3.5] \label{lem:ag2}
Let $X$ be an affine algebraic variety in $\mathbb A_{\mathbb C}^n$ such that $X\cap V(z_n-b)$ is a finite set for every $b\in\mathbb C$. Then $X$ is an algebraic curve.
\end{lem}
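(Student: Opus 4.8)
The plan is to analyse $X$ through the last-coordinate projection and to recover $\dim X$ from the size of its fibres. Consider the morphism $\pi : X \to \mathbb A^1_{\mathbb C}$ given by $\pi(z_1,\dots,z_n)=z_n$; its fibre over $b\in\mathbb C$ is precisely $\pi^{-1}(b)=X\cap V(z_n-b)$. First I would dispose of the case in which $z_n$ is constant on $X$, say $X\subseteq V(z_n-c)$: then the fibre over $c$ equals all of $X$, which by hypothesis is finite, so the irreducible set $X$ is a single point and $\dim X=0$ --- the degenerate situation ruled out once one asks (as throughout the paper) that the set have positive dimension. In every remaining case $z_n$ is nonconstant on the irreducible $X$, whence its image is infinite; since the only infinite irreducible closed subset of $\mathbb A^1_{\mathbb C}$ is the whole line, $\pi$ is dominant and $\dim\overline{\pi(X)}=1$.

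Now the conclusion is immediate from the theorem on fibre dimension for a dominant morphism of irreducible varieties (see e.g. \cite{Ro:H}): for every $b$ in the image and every irreducible component $Z$ of $\pi^{-1}(b)$ one has $\dim Z \ge \dim X - \dim \overline{\pi(X)} = \dim X - 1$. Choosing any $b_0\in\pi(X)$, the fibre $X\cap V(z_n-b_0)$ is nonempty and, by hypothesis, finite, so each of its components is a point of dimension $0$; the displayed inequality then forces $0\ge \dim X -1$, that is, $\dim X\le 1$. Since dominance already gives $\dim X\ge \dim\overline{\pi(X)}=1$, we conclude $\dim X=1$, so that $X$ is an algebraic curve in the sense of the earlier definition.

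The one genuine input is the fibre-dimension inequality, which I would cite rather than reprove; it expresses exactly that slicing a $k$-dimensional irreducible variety by the preimage of a point of a one-dimensional base lowers the dimension by at most one. An equivalent, purely field-theoretic route avoids invoking it: once $z_n$ is nonconstant it is transcendental over $\mathbb C$ inside the function field $\mathbb C(X)$, and finiteness of a generic fibre says precisely that each of $z_1,\dots,z_{n-1}$ is algebraic over $\mathbb C(z_n)$; hence $\mathbb C(X)$ is algebraic over $\mathbb C(z_n)$ and $\dim X=\operatorname{trdeg}_{\mathbb C}\mathbb C(X)=\operatorname{trdeg}_{\mathbb C}\mathbb C(z_n)=1$. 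Either argument is short; the only subtlety to handle cleanly is that the hypothesis is imposed for \emph{every} $b$, which is what lets one pass freely between a generic fibre and a concrete nonempty one, and which is also what collapses the constant-$z_n$ case to a single point.
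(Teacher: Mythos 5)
Your argument is correct. Note first that the paper does not actually prove this lemma: it is quoted verbatim from \cite{S:Pal2} (Lemma 3.5) and used as a black box, so there is no in-paper proof to compare against; I can only assess your proof on its own terms. Both of your routes are sound. The main one --- project by $\pi(z)=z_n$, observe that $\overline{\pi(X)}$ is an irreducible closed subset of $\mathbb A^1_{\mathbb C}$ and hence either a point or the whole line, and in the dominant case apply the fibre-dimension inequality $\dim Z\ge\dim X-\dim\overline{\pi(X)}$ to a nonempty finite fibre to squeeze $\dim X$ between $1$ and $1$ --- is the standard and efficient proof, and the transcendence-degree variant ($\mathbb C(X)$ algebraic over $\mathbb C(z_n)$, so $\operatorname{trdeg}_{\mathbb C}\mathbb C(X)=1$) is an acceptable substitute provided you justify that finiteness of a generic fibre really forces each $z_i$ to be algebraic over $\mathbb C(z_n)$ (this again hides a constructibility/fibre-dimension argument, so it is not genuinely cheaper). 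Your one caveat is the right one to flag: as literally stated the lemma fails for $X$ a single point, which satisfies the hypothesis but is not a curve, and the exclusion of that case rests on the paper's standing convention that the sets under consideration have positive dimension; it would be worth saying explicitly that this is an added hypothesis rather than something you derive.
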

We first prove that every distinguished variety (or distinguished set) in the polydisc $\D^n$ has complex dimension $1$ which is surprising yet true for all $n \geq 2$ and is a main result of this paper.

\begin{thm}\label{thm:poly-01}
Every distinguished variety in the polydisc $\mathbb D^n$ is a part of an affine algebraic curve in $\mathbb C^n$.
\end{thm}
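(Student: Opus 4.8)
The plan is to show that the algebraic variety $W$ defining the distinguished variety $\Omega=\mathbb D^n\cap W$ has complex dimension exactly $1$; since an irreducible algebraic set of dimension one is an affine algebraic curve and $\Omega\subseteq W$, this gives the theorem. By hypothesis $W$ is irreducible with $\dim W>0$, and the distinguished condition reads $W\cap\partial\overline{\mathbb D^n}\subseteq\mathbb T^n$, that is, every point of $W$ lying on the topological boundary of the closed polydisc already lies on the $n$-torus. We may assume $\Omega\neq\emptyset$, as otherwise the statement is vacuous. The idea is to slice $W$ by the hyperplanes $V(z_n-b)$ and to argue, in the spirit of Lemma \ref{lem:ag2}, that if $\dim W\geq 2$ then the slices through points of $\Omega$ would be forced to be positive-dimensional, contradicting the torus-exit condition.

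The geometric heart is the following claim: for $|b|<1$, every positive-dimensional irreducible component $C$ of the slice $W\cap V(z_n-b)$ is disjoint from $\mathbb D^n$. To see this, note first that $C$, being a positive-dimensional affine variety over $\mathbb C$, is connected in the Euclidean topology and unbounded, since a bounded component would be a compact, hence finite, analytic set. On $C$ we have $z_n\equiv b$ with $|b|<1$, so $C\cap\mathbb D^n=C\cap\{|z_i|<1 : i\le n-1\}$ is open in $C$. It is also closed in $C$: if some $z\in C$ met $\partial\overline{\mathbb D^n}$, then $z\in W\cap\partial\overline{\mathbb D^n}\subseteq\mathbb T^n$, forcing $|z_n|=1\neq|b|$, which is impossible; hence $C\cap\mathbb D^n=C\cap\overline{\mathbb D^n}$ is closed in $C$. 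A clopen subset of the connected set $C$ is $\emptyset$ or $C$, and unboundedness rules out the latter, so $C\cap\mathbb D^n=\emptyset$.

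Finally I would rule out $\dim W\geq 2$. First, $z_n$ is non-constant on $W$: if $z_n\equiv b_0$ then no point of $W$ could reach $\partial\overline{\mathbb D^n}$ without forcing $|b_0|=1$, while $\Omega\neq\emptyset$ forces $|b_0|<1$; thus $\overline\Omega=W\cap\overline{\mathbb D^n}\subseteq\mathbb D^n$ would be a compact analytic subset of $\mathbb D^n$ and therefore finite, contradicting $\dim W>0$. Hence $z_n\colon W\to\mathbb C$ is dominant, and the theorem on the dimension of fibers gives that every component of every nonempty fiber $W\cap V(z_n-b)$ has dimension at least $\dim W-1$. Now choose any $p\in\Omega$ and set $b:=p_n$, so that $|b|<1$ and the component $C$ of $W\cap V(z_n-b)$ through $p$ is nonempty with $p\in C\cap\mathbb D^n$. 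If $\dim W\geq 2$ this component has dimension at least $1$, contradicting the claim of the previous paragraph. Hence $\dim W=1$, so $W$ is an affine algebraic curve containing $\Omega$; alternatively, knowing that each slice $W\cap V(z_n-b)$ is then finite, Lemma \ref{lem:ag2} identifies $W$ as a curve. The main obstacle is precisely that a positive-dimensional slice may escape the polydisc, so the torus-exit hypothesis controls only the portion of each slice inside $\mathbb D^n$; the resolution is to combine the uniform lower bound on fiber dimension with the freedom to take the slicing value $b$ in $z_n(\Omega)$, which guarantees that the offending component actually meets $\mathbb D^n$.
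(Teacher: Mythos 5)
Your proposal is correct and follows essentially the same strategy as the paper: slice $W$ by a hyperplane $z_n=b$ through a point of $\Omega$, and observe that a positive-dimensional slice, being connected and unbounded, would have to cross $\partial\overline{\mathbb{D}^n}$ at a point with $|z_n|=|b|<1$, hence outside $\mathbb{T}^n$, contradicting the distinguished-boundary condition. The one genuine difference is how you certify that the slice through a point of $\Omega$ is positive-dimensional when $\dim W\geq 2$: you invoke the theorem on the dimension of fibers of the dominant map $z_n\colon W\to\mathbb{C}$, whereas the paper argues by contraposition via Lemma \ref{lem:ag2} (all slices finite would force $\dim\Omega=1$) and then selects a point where the slice is infinite; your route is slightly cleaner in that it pins the offending component to a specific point $p\in\Omega$, so it automatically meets $\mathbb{D}^n$, and your clopen formulation of the exit argument is a tidy rewording of the paper's ``unbounded connected set must exit through the topological boundary.''
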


\begin{proof}

Let $\Omega$ be a distinguished variety in $\mathbb D^n$. Then $\Omega=V_S\cap \mathbb D^n$ for some complex algebraic variety $V_S$ in $\mathbb C^n$, generated by a set of polynomials $S\subseteq \mathbb C[z_1,\dots ,z_n]$. By Eisenbud-Evans theorem (see \cite{Eisenbud}), there exist $n$ polynomials say $f_1,\dots,f_n$ in $\mathbb C[z_1,\dots,z_n]$ such that $V_S$ is generated by $f_1,\dots,f_n$. So, without loss of generality let us assume that
\[
\Omega=V_S\cap \mathbb D^n=\{(z_1\dots,z_n)\in\mathbb D^n\,:\,f_i(z_1,\dots,z_n)=0\,,\,1\leq i \leq n  \}.
\]
Let if possible $\Omega$ has complex dimension $u$ and $u>1$. Let $(t_1,\dots,t_n)\in\Omega$. Then $|t_i|<1$ for $i=1,\dots,n$. Let
\[
\Omega_{t_n}=\{(z_1,\dots,z_n)\in \Omega \,:\, z_n=t_n   \}
\]
and let
\[
V_{S_{t_n}}=\{(z_1,\dots,z_{n-1},t_n)\in\mathbb C^n\,:\, f_i(z_1,\dots,z_{n-1},t_n)=0\,,\, i=1,\dots,n   \} \subseteq V_S.
\]
Then $\Omega_{t_n}=V_{S_{t_n}}\cap \mathbb D^n \subset V_S\cap \mathbb D^n=\Omega $. Now if every such set $\Omega_{t_n}$ is finite whenever $(t_1, \dots t_n) \in \Omega$, then by an argument similar to that in Lemma \ref{lem:ag2}, it follows that $\Omega$ has complex dimension $1$, a contradiction as $u>1$. Thus, we choose such a point $(t_1, \dots , t_n)\in \Omega$ for which $\Omega_{t_n}$ has complex dimension at least $1$. Set
\[
V_{S^{\prime}}=\{(z_1,\dots,z_{n-1})\in\mathbb C^{n-1}\,:\, f_i(z_1,\dots,z_{n-1},t_n)=0\,,\, 1\leq i \leq n   \}.
\]
Then $V_{S^{\prime}}$ is a complex algebraic variety in $\C^{n-1}$ with complex dimension at least $u-1$ and $u-1 \geq 1$. The intersection of $V_{S^{\prime}}$ and $\mathbb D^{n-1}$ is not empty as $(t_1,\dots,t_{n-1})\in V_{S^{\prime}}\cap \mathbb D^{n-1}$. Note that $V_{S^{\prime}}$, being a complex algebraic variety, is an unbounded connected subset of $\C^{n-1}$. Therefore, $V_{S^{\prime}}$ must exit through the topological boundary $\partial \overline{\mathbb D^{n-1}}$ of $\mathbb D^{n-1}$. Let $(q_1,\dots,q_{n-1})\in V_{S^{\prime}}\cap \partial \overline{\mathbb D^{n-1}}$. Then at least one of $q_1,\dots,q_{n-1}$ has modulus equal to $1$. Let $|q_t|=1$. Clearly $(q_1,\dots, q_{n-1},t_n)\in V_{S_{t_n}} \subseteq V_S$ as $(q_1,\dots q_{n-1})\in V_{S^{\prime}}\cap \partial \overline{\mathbb D^{n-1}}$. Since $|q_t|=1$ and $|t_n|<1$, $(q_1,\dots, q_{n-1},t_n)$ belongs to $\overline{\mathbb D^n}\setminus \mathbb D^n$ but does not belong to $\mathbb T^n$. Thus, $(q_1,\dots, q_{n-1},t_n) \in V_S \cap (\partial \ov{\D^n} \setminus \mathbb T^n )$ and hence $\Omega$ does not exit through the distinguished boundary of $\mathbb D^n$, a contradiction. Hence $\Omega$ has complex dimension $1$ and consequently it is an affine algebraic curve lying in $\mathbb D^n$.

\end{proof}

\begin{cor}\label{cor:001}
Every distinguished set in $\D^n$ has complex dimension $1$.
\end{cor}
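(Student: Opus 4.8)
The plan is to reduce the general distinguished set to the distinguished variety case already handled by Theorem~\ref{thm:poly-01}. By definition a distinguished set $\Omega$ in $\D^n$ has the form $\Omega = W \cap \D^n$ for an algebraic set $W \subseteq \mathbb{A}^n_{\mathbb{C}}$ with $\dim W > 0$, and the exit condition forces $W \cap \partial\ov{\D^n} = W \cap \T^n$, i.e. $W \cap (\partial\ov{\D^n}\setminus \T^n)=\emptyset$. First I would apply the decomposition result quoted above (\cite{Ro:H}, CH-I, Corollary 1.6) to write $W = W_1 \cup \cdots \cup W_m$ as a finite union of affine algebraic varieties with no one containing another, so that $\Omega = \bigcup_{i=1}^m (W_i \cap \D^n)$.

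The key step is that the defining features of a distinguished set are inherited by each irreducible component. Since each $W_i \subseteq W$ and $W$ avoids $\partial\ov{\D^n}\setminus \T^n$, every component also satisfies $W_i \cap (\partial\ov{\D^n}\setminus \T^n)=\emptyset$; thus $W_i$ can only leave $\D^n$ through the distinguished boundary $\T^n$. Hence for each index $i$ with $\dim W_i > 0$ and $W_i \cap \D^n \neq \emptyset$, the set $\Omega_i = W_i \cap \D^n$ is an irreducible distinguished set, that is, a distinguished variety in $\D^n$. This makes precise the earlier remark that a distinguished set is a union of distinguished varieties.

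Finally I would invoke Theorem~\ref{thm:poly-01}: each distinguished variety $\Omega_i$ is part of an affine algebraic curve and so has complex dimension exactly $1$. Because the dimension of a finite union equals the maximum of the dimensions of its members, and because $\dim \Omega > 0$ guarantees that at least one positive-dimensional component meets $\D^n$ while any $0$-dimensional component contributes only isolated points, I conclude $\dim \Omega = \max_i \dim(W_i \cap \D^n) = 1$. The only point requiring care is this bookkeeping around low-dimensional components---ensuring that isolated points of $W$ lying in $\D^n$ do not govern $\dim\Omega$, and that the exit condition genuinely descends to each $W_i$ rather than holding only for $W$ as a whole---but both are immediate, the former from the standing hypothesis $\dim\Omega>0$ and the latter because the offending boundary set $\partial\ov{\D^n}\setminus\T^n$ is disjoint from $W$ and hence from every $W_i$.
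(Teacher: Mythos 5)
Your argument is correct and is essentially the paper's own proof: the paper likewise decomposes the algebraic set into its irreducible components (no one containing another), observes that the dimension of the union is the maximum of the dimensions of the components, and applies Theorem~\ref{thm:poly-01} to each positive-dimensional component meeting $\D^n$. Your write-up merely makes explicit the bookkeeping (inheritance of the exit condition by each component, discarding components that miss $\D^n$ or are zero-dimensional) that the paper leaves implicit.
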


\begin{proof}

An algebraic set $W$ is nothing but the union of its irreducible components which are algebraic varieties and the dimension of $W$ is the maximum of the dimensions of its irreducible components. Hence a distinguished set in $\D^n$ has complex dimension $1$.

\end{proof}

Note that Theorem \ref{thm:poly-01} and Corollary \ref{cor:001} modify Theorem 1.4 in \cite{D:Sch} due to Scheinker which was later efficiently reformed in \cite{D:Sch1}. In \cite{pal-shalit}, the author and Shalit gave an explicit description of distinguished sets in the symmetrized bidisc $
\mathbb G_2 = \{(z_1+z_2,z_1z_2):\, |z_i|<1, \; i=1,2  \}
$ as $\mathcal Z(f)\cap \mathbb G_2$, where $f(z_1,z_2)=A^*+z_2A-z_1I=0$ for a square matrix $A$ having numerical radius not greater than one.
The author of this article further generalized the result for the symmetrized polydisc $\mathbb G_n$ in \cite{S:Pal2}, where
\[
\mathbb G_n =\left\{ \left(\sum_{1\leq i\leq n} z_i,\sum_{1\leq
i<j\leq n}z_iz_j,\dots, \prod_{i=1}^n z_i \right): \,|z_i|< 1,
i=1,\dots,n \right \}.
\]
Recall from the literature (see \cite{costara1} and the references therein) that the \textit{symmetrized} $n$-\textit{disc} or simply the symmetrized polydisc $\mathbb G_n$ is the image of the polydisc $\mathbb D^n$ under the symmetrization map $\pi_n:\mathbb C^n \rightarrow \mathbb C^n$ defined by
\[
\pi_n(z)=(s_1(z),\dots, s_{n-1}(z), p(z)), \quad z=(z_1,\dots,z_n),
\]
 where
 \[
s_i(z)= \sum_{1\leq k_1 \leq k_2 \dots \leq k_i \leq n}
z_{k_1}\dots z_{k_i} \;,\; i=1,\dots,n-1 \quad \text{ and } \quad
p(z)=\prod_{i=1}^{n}z_i\,.
 \]
In \cite{S:Pal2}, we had the following repressentation of a ditinguished variety in $\mathbb G_n$.
\begin{thm} [\cite{S:Pal2}, Theorem 3.14] \label{thm:DVchar}
Let
\begin{equation}\label{eq:W}
\Lambda = \{ (s_1,\dots,s_{n-1},p)\in \mathbb G_n \,: \; (s_1,\dots,s_{n-1}) \in \sigma_T(F_1^*+pF_{n-1}\,,\, F_2^*+pF_{n-2}\,,\,\dots\,, F_{n-1}^*+pF_1) \},
\end{equation}
where $\sigma_T$ denotes the Taylor joint spectrum and $F_1,\dots,F_{n-1}$ are complex square matrices of same order that satisfy the following conditions:
\begin{itemize}
\item[(i)] $[F_i,F_j]=0$ and $[F_i^*,F_{n-j}]=[F_j^*,F_{n-i}]$, for $1\leq i<j\leq
n-1$ ; \item[(ii)] $\sigma_T(F_1^*+zF_{n-1}, F_2^*+zF_{n-2},\dots,F_{n-1}^*+zF_1, zI)\subseteq \mathbb
G_{n}$, $\forall z\in \mathbb D$ ;
\item[(iii)] the polynomials $\{ f_1,\dots ,f_{n-1} \}$, where $f_i=\det\,(F_i^*+pF_{n-i}-s_iI)$, form a regular sequence ;
\item[(iv)] either $\Lambda=V_S\cap \mathbb G_n$ or $\Lambda$ is an irreducible component of $V_S\cap \mathbb G_n$, where $V_S$ is the complex algebraic set generated by the set of polynomials $S=\{ f_1,\dots, f_{n-1} \}$.
\end{itemize}
Then, $\Lambda$ is a distinguished variety in $\mathbb G_n$, which further is an affine algebraic curve lying in $\mathbb G_n$ and a complete intersection.\\
Conversely, every distinguished variety $\Lambda$ in $\mathbb G_n$ is an affine algebraic curve in $\mathbb G_n$, a complete intersection and has representation as in $($\ref{eq:W}$)$, where $F_1,\dots, F_{n-1}$ are complex square matrices of same order satisfying the above conditions ${(i)-(iv)}$.

\end{thm}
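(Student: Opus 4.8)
The statement is a biconditional, so the plan is to treat the two implications separately after isolating the one geometric fact that drives everything: in $\overline{\mathbb{G}}_n=\pi_n(\overline{\mathbb{D}}^n)$ the last coordinate $p$ decides interior versus distinguished boundary. Indeed, if $(s_1,\dots,s_{n-1},p)\in\overline{\mathbb{G}}_n$ has $|p|=1$, writing it as $\pi_n(z)$ with $z\in\overline{\mathbb{D}}^n$ forces $\prod|z_i|=1$ and hence every $|z_i|=1$, so the point lies in $\pi_n(\mathbb{T}^n)=b\mathbb{G}_n$; thus $\overline{\mathbb{G}}_n\cap\{|p|=1\}=b\mathbb{G}_n$, while the slice $\{|p|<1\}$ meets $\overline{\mathbb{G}}_n$ only in the open set $\mathbb{G}_n$ together with the non-distinguished boundary. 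I would also record that, since $zI$ is scalar, the joint spectrum in (ii) factors as $\sigma_T(F_1^*+zF_{n-1},\dots,F_{n-1}^*+zF_1)\times\{z\}$, so (ii) is exactly the assertion that the $p=z$ fibre of $\Lambda$ sits inside $\mathbb{G}_n$ for every $z\in\mathbb{D}$.

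For the forward implication I would first check, using (i), that the linear pencils $F_i^*+pF_{n-i}$ genuinely commute for each fixed $p$: the mixed commutator terms cancel precisely because $[F_i^*,F_{n-j}]=[F_j^*,F_{n-i}]$, while the remaining terms vanish since the $F$'s (hence the $F^*$'s) commute, so the Taylor joint spectrum and its description as joint eigenvalues are available. Condition (ii) then gives $\Lambda\subseteq\mathbb{G}_n$ at once. Viewing $\Lambda$ as fibred over the $p$-disc, each fibre lies in the open $\mathbb{G}_n$ for $|p|<1$; passing to the closure can only adjoin limit points, and since the joint spectrum varies upper-semicontinuously (indeed with closed graph) in $p$, the limit points over $|p|<1$ remain in the interior fibres, while those over $|p|=1$ land in $b\mathbb{G}_n$ by the fact above. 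Hence the algebraic set $V_S$ generated by the $f_i$ meets $\partial\overline{\mathbb{G}}_n$ only along $b\mathbb{G}_n$, which is the defining property of a distinguished set. Finiteness of each fibre (the $s_i$ are among the finitely many eigenvalues of $F_i^*+pF_{n-i}$) lets me invoke Lemma \ref{lem:ag2} to conclude $\dim\Lambda=1$; condition (iii) makes $f_1,\dots,f_{n-1}$ a regular sequence cutting out codimension $n-1$, so $\Lambda$ is a complete intersection; and (iv) secures that $\Lambda$ is a variety and fixes its relation to $V_S$.

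The converse is where the real work lies, and the plan is to manufacture the matrices $F_i$ from operator theory rather than from the equations of $\Lambda$. Starting from a distinguished variety $\Lambda\subseteq\mathbb{G}_n$, I would first argue as in Theorem \ref{thm:poly-01} and Corollary \ref{cor:001} — using finiteness of the $p$-fibres of a one-dimensional set that exits through $\{|p|=1\}$ — that $\Lambda$ is an algebraic curve and, via an Eisenbud–Evans reduction, a complete intersection. The representation itself I would obtain by realizing $\Lambda$ as the joint spectrum of the multiplication tuple of a pure $\Gamma_n$-isometry: the coordinate functions $(s_1,\dots,s_{n-1},p)$ on $\Lambda$, together with the inner behaviour of $p$ on $b\mathbb{G}_n$, give a commuting tuple on a Hardy-type space of finite-dimensional defect whose structure is governed by the Berger–Coburn–Lebow data of Theorem \ref{BCL} and the dilation results of Theorems \ref{Uni-puredil} and \ref{Unimain}. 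The matrices $F_1,\dots,F_{n-1}$ are then the fundamental operators of this $\Gamma_n$-contraction, living on the finite-dimensional defect space; the operator identities they satisfy translate into (i), the spectral localization of the model over $\overline{\mathbb{D}}$ yields (ii), and the curve/complete-intersection conclusions of the first step yield (iii)–(iv).

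The hardest point, and the one I would spend the most care on, is showing that the joint spectrum of the reconstructed pencils reproduces exactly $\Lambda$ and not merely the larger determinantal set $V_S$ — this is the gap between ``each $s_i$ is an eigenvalue of $F_i^*+pF_{n-i}$'' and ``the $s_i$ share a common eigenvector'' — together with the matching boundary behaviour on $|p|=1$, where the torus relation $s_{n-i}=p\,\overline{s_i}$ must be seen to force the pencils to be simultaneously normal so that their joint spectrum lands in $b\mathbb{G}_n$. Absorbing this discrepancy is precisely the role of condition (iv), and verifying that the fundamental operators deliver $\Lambda$ rather than an overlarge $V_S$ is the crux of the whole argument.
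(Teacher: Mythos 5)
You should first note that the paper does not prove this statement at all: Theorem \ref{thm:DVchar} is imported verbatim from \cite{S:Pal2} (Theorem 3.14) and used as a black box, so there is no in-paper proof to compare against. Judged on its own, your outline does follow the strategy of that source --- commutativity of the pencils from (i), the spectral inclusion (ii) plus the observation that $|p|=1$ characterizes $b\Gamma_n$ inside $\Gamma_n$, fibre-finiteness over $p$ feeding into Lemma \ref{lem:ag2} to get dimension one, and, for the converse, realizing $\Lambda$ via the multiplication tuple on an $H^2(\mu)$-type space with finite-dimensional defect and extracting the $F_i$ as fundamental operators of the resulting model.

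The problem is that what you have written is a plan rather than a proof, and the step you yourself flag as ``the crux'' is exactly the one you do not carry out. In the converse direction you never show that the joint spectrum of the reconstructed pencils $F_i^*+pF_{n-i}$ equals $\Lambda$ rather than the larger determinantal set $V_S$; the mechanism that closes this in the cited source is the $\mathbb G_n$-analogue of Lemma \ref{lemeval} --- the kernel functions $k_\lambda$ of $H^2(\mu)$ are joint eigenvectors of $(M_{s_1}^*,\dots,M_{s_{n-1}}^*,M_p^*)$, and compressing to the defect space converts bounded point evaluations on $\Lambda$ into joint eigenvalues of the pencils --- together with an argument that for $|p|=1$ the pencils are simultaneously normal (unitary), which you assert must happen but do not derive. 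A second, smaller gap sits in the forward direction: the definition of a distinguished variety requires $W\cap\partial\Gamma_n\subseteq b\Gamma_n$ for the generating algebraic set $W$ (this is what $\overline{\Omega}$ means in the paper's definition), whereas your closure argument only controls limit points of the joint-spectrum set $\Lambda$ itself; on $V_S\setminus\Lambda$ the coordinates $s_i$ are merely individual eigenvalues of the respective pencils, so condition (ii) does not directly apply there, and you need condition (iv) to do more work than you assign to it. Until these two points are argued rather than announced, the proposal does not constitute a proof.
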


We are on our way to find a representation of a distinguished variety in the polydisc in terms of a distinguished variety in the symmetrized polydisc. The first step towards this is the following result that guarantees that every distinguished variety in $\mathbb G_n$ gives rise to a distinguished variety in $\D^n$ and vice-versa.

\begin{thm}\label{thm:poly-02}
Let $\Lambda \subset \mathbb G_n$. Then $\Lambda$ is a distinguished variety in $\mathbb G_n$ if and only if $\Lambda=\pi_n (\Omega)$ for some distinguished variety $\Omega$ in $\mathbb D^n$.
\end{thm}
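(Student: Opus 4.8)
The plan is to prove the biconditional by exploiting the fact that the symmetrization map $\pi_n : \mathbb D^n \to \mathbb G_n$ is a proper, surjective, finite-to-one holomorphic map that carries $\mathbb T^n$ onto the distinguished boundary $b\mathbb G_n$ and carries $\partial\overline{\mathbb D^n}$ onto $\partial\overline{\mathbb G_n}$. Concretely, the first step is to record the boundary behaviour of $\pi_n$: since $\pi_n$ is a branched covering (a polynomial map, proper because $\mathbb D^n$ and $\mathbb G_n$ are bounded and $\pi_n^{-1}$ of a compact set is compact), it restricts to a surjection $\mathbb T^n \to b\mathbb G_n$ and, crucially, satisfies $\pi_n^{-1}(b\mathbb G_n)\cap\overline{\mathbb D^n} = \mathbb T^n$ and $\pi_n^{-1}(\partial\overline{\mathbb G_n})\cap\overline{\mathbb D^n}=\partial\overline{\mathbb D^n}$; equivalently a point of $\overline{\mathbb D^n}$ lies in $\mathbb T^n$ precisely when its image lies in $b\mathbb G_n$. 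These facts are standard for the symmetrized polydisc and I would cite \cite{costara1}.

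For the forward direction, suppose $\Omega$ is a distinguished variety in $\mathbb D^n$ and set $\Lambda=\pi_n(\Omega)$. I would first argue that $\Lambda$ is an algebraic variety: $\Omega$ is irreducible of dimension one by Theorem \ref{thm:poly-01}, and the image of an irreducible variety under a polynomial map is irreducible; since $\pi_n$ is finite-to-one, $\dim\Lambda=\dim\Omega=1>0$, so $\Lambda$ is an affine algebraic curve (its Zariski closure is a variety, and $\Lambda$ is the intersection of that curve with $\mathbb G_n$). Writing $W$ for the Zariski closure of $\Omega$, the Zariski closure of $\Lambda$ is $\pi_n(W)$. The key point is to transfer the distinguished boundary condition: I must show $\overline{\Lambda}\cap\partial\overline{\mathbb G_n}=\overline{\Lambda}\cap b\mathbb G_n$. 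Take a point $w\in\overline{\Lambda}\cap\partial\overline{\mathbb G_n}$; using properness of $\pi_n$ and continuity, $w=\pi_n(z)$ for some $z\in\overline{\Omega}\cap\partial\overline{\mathbb D^n}$. Because $\Omega$ is distinguished, $z\in\mathbb T^n$, whence $w=\pi_n(z)\in b\mathbb G_n$. This gives the required exit-through-the-distinguished-boundary property for $\Lambda$, so $\Lambda$ is a distinguished variety in $\mathbb G_n$.

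For the converse, let $\Lambda$ be a distinguished variety in $\mathbb G_n$ and define $\Omega=\pi_n^{-1}(\Lambda)\cap\mathbb D^n$. Since $\pi_n$ is a polynomial map, $\pi_n^{-1}$ of the algebraic set defining $\Lambda$ is again an algebraic set in $\mathbb C^n$, and intersecting with $\mathbb D^n$ gives a distinguished set; the dimension is again one because $\pi_n$ is finite-to-one. The boundary condition runs in the reverse direction using the same equivalence: if $z\in\overline{\Omega}\cap\partial\overline{\mathbb D^n}$ then $\pi_n(z)\in\overline{\Lambda}\cap\partial\overline{\mathbb G_n}=\overline{\Lambda}\cap b\mathbb G_n$, and by the boundary characterization of $\pi_n$ this forces $z\in\mathbb T^n$. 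Hence $\Omega$ exits $\mathbb D^n$ only through $\mathbb T^n$. One then passes to an irreducible component of $\Omega$ whose image is $\Lambda$ (such a component exists because $\pi_n$ surjects $\Omega$ onto $\Lambda$ and $\Lambda$ is irreducible, so at least one component must dominate it) to obtain a genuine distinguished \emph{variety} $\Omega$ with $\pi_n(\Omega)=\Lambda$.

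I expect the main obstacle to be the careful handling of the boundary behaviour of $\pi_n$, specifically the two identities $\pi_n^{-1}(b\mathbb G_n)\cap\overline{\mathbb D^n}=\mathbb T^n$ and $\pi_n^{-1}(\partial\overline{\mathbb G_n})\cap\overline{\mathbb D^n}=\partial\overline{\mathbb D^n}$, since the entire transfer of the distinguished-boundary condition rests on them; a point could a priori leave $\mathbb G_n$ through $b\mathbb G_n$ while its preimage approaches the Shilov boundary only tangentially, so I would need the cleanest available statement identifying $b\mathbb G_n=\pi_n(\mathbb T^n)$ together with properness to rule this out. A secondary subtlety is ensuring irreducibility is preserved in both directions and that the finite-to-one property indeed equates the two complex dimensions; both follow from standard dimension theory for finite morphisms, but they must be invoked explicitly rather than assumed.
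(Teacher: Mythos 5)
Your proof is correct, but it takes a genuinely different route from the paper's. The paper argues entirely through explicit polynomial generators: starting from a distinguished variety $\Lambda$ in $\mathbb G_n$, it invokes Theorem \ref{thm:DVchar} to get $n-1$ generators $f_i$, pulls them back to $f_i\circ\pi_n$, and takes their common zero set in $\mathbb D^n$ as $\Omega$; starting from a distinguished variety $\Omega$ in $\mathbb D^n$, it symmetrizes the Eisenbud--Evans generators $g_i$ over the permutation group $S_n$, observes that each symmetrized polynomial $\tilde g_i$ factors as $f_i\circ\pi_n$, and lets the $f_i$ define $\Lambda$. You instead argue abstractly with the map $\pi_n$ itself: properness and finiteness, the fact that fibers of $\pi_n$ are $S_n$-orbits (which is exactly what gives $\pi_n^{-1}(b\Gamma_n)\cap\overline{\mathbb D^n}=\mathbb T^n$, the identity you flag as the main obstacle --- it follows at once since $\pi_n(z)=\pi_n(w)$ forces $z$ to be a permutation of $w$), and the fact that a finite morphism carries irreducible closed sets to irreducible closed sets of the same dimension. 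Both arguments ultimately rest on $\pi_n(\mathbb T^n)=b\Gamma_n$ for the boundary transfer, and there is no gap in yours. Indeed, your handling of irreducibility in the pullback direction is more careful than the paper's: $\pi_n^{-1}(\Lambda)$ need not be irreducible, and you explicitly pass to an irreducible component dominating $\Lambda$ (checking it meets $\mathbb D^n$), whereas the paper simply asserts that the zero set of the $f_i\circ\pi_n$ is a one-dimensional variety. What the paper's constructive route buys in exchange is an explicit generating set $\{f_i\circ\pi_n\}$ for $\Omega$ consisting of $n-1$ polynomials, which is precisely what makes Corollary \ref{cor:poly-01} (the set-theoretic complete intersection property) immediate; your abstract argument would need an additional step to recover that.
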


\begin{proof}

Let $\Lambda$ be a distinguished variety in $\mathbb G_n$. Then by Theorem \ref{thm:DVchar}, $\Lambda$ is an affine algebraic curve and is a complete intersection. Let $\Lambda$ be generated by $S=\{f_1,\dots, f_{n-1}  \}\subset \mathbb C[z_1,\dots,z_{n-1}]$, that is,
\[
\Lambda=\{(s_1,\dots,s_{n-1},p)\in \mathbb G_n \,:\, f_i(s_1,\dots,s_{n-1},p)=0\,,\, 1\leq i\leq n-1  \}.
\]
Let
\[
\Omega =\{(z_1,\dots,z_n)\in\mathbb D^n \,:\, f_i \circ \pi_n (z_1,\dots,z_n)=0 \,,\, 1\leq i \leq n-1 \}.
\]
Then $\Lambda =\pi_n (\Omega)$ and thus $\Omega$ is a one-dimensional complex algebraic variety in $\mathbb D^n$. Since $\pi_n(\mathbb T^n)=b\Gamma_n$, it follows that $\Omega$ exits through the distinguished boundary $\mathbb T^n$ of $\mathbb D^n$. Hence $\Omega$ is a distinguished variety in $\mathbb D^n$.\\

Conversely, suppose $\Omega$ is a distinguished variety in $\mathbb D^n$. Then by Theorem \ref{thm:poly-01}, $\Omega$ has complex dimension $1$ and by Eisenbud-Evans theorem (see \cite{Eisenbud}), there exist $n$ polynomials $g_1,\dots, g_n$ in $\mathbb C[z_1,\dots,z_n]$ that generate $\Omega$. For $i=1,\dots,n$ let
\[
\tilde{g}_i(z_1\dots,z_n)=\prod_{(\sigma(z_1),\dots,\sigma(z_n))\in S_n}\; g_i(\sigma(z_1),\dots,\sigma(z_n))\,,
\]
where $S_n$ is the group of permutation of $\{1,\dots,n  \}$. If $V_i$ and $\widetilde{V}_i$ are the algebraic sets generated by $g_i$ and $\tilde{g}_i$ respectively, then $\pi_n(\tilde{V}_i)=\pi_n(V_i)$ for each $i$. Thus if $S=\{  g_1,\dots, g_n\}$ and $S^{\prime}= \{ \tilde{g}_1,\dots,\tilde{g}_{n} \}$, then $\Omega=V_S\cap \mathbb D^n =V_{S^{\prime}}\cap \mathbb D^n$. Again since $\tilde{g}_i$ is a symmetric polynomial, $\tilde{g}_i(z_1,\dots,z_n)=f_i \circ \pi_n(z_1,\dots,z_n)$ for some polynomial $f_i$. Let
\[
\Lambda = \{ (s_1,\dots, s_{n-1},p)\in\mathbb G_n\,:\, f_i(s_1,\dots, s_{n-1},p)=0\,,\, 1\leq i \leq 1 \}.
\]
Then $\Lambda=\pi_n(\Omega)$ and $\Lambda$ is a one-dimensional complex algebraic variety in $\mathbb G_n$. Since $\pi_n(\mathbb T_n)=b\Gamma_n$, $\Lambda$ exits through the distinguished boundary $b\Gamma$. Hence $\Lambda=\pi_n(\Omega)$ is a distinguished variety in $\mathbb G_n$ and the proof is complete.

\end{proof}

\begin{cor}\label{cor:poly-01}
Every distinguished variety in the polydisc $\mathbb D^n$ is a set-theoretic complete intersection.
\end{cor}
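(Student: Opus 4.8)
The plan is to transport the complete–intersection property across the symmetrization map $\pi_n$, using the two preceding theorems as the only substantial inputs. First I would invoke Theorem \ref{thm:poly-02}: since $\Omega$ is a distinguished variety in $\D^n$, its image $\Lambda:=\pi_n(\Omega)$ is a distinguished variety in $\gn$. Next, by the characterization in Theorem \ref{thm:DVchar}, $\Lambda$ is an affine algebraic curve that is itself a set-theoretic complete intersection; in particular condition (iii) there furnishes $n-1$ polynomials $f_1,\dots,f_{n-1}\in\C[z_1,\dots,z_n]$ forming a regular sequence whose common zero locus is the Zariski closure of $\Lambda$. This is exactly where the improvement from $n$ to $n-1$ generators comes from: Eisenbud--Evans only gives $n$ equations for $\Omega$ directly, whereas the $\gn$-picture supplies one fewer.

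The second step is to pull these defining equations back along $\pi_n$. Setting $g_i:=f_i\circ\pi_n$ for $1\le i\le n-1$, each $g_i$ is a symmetric polynomial in $\C[z_1,\dots,z_n]$, and the common zero locus of $g_1,\dots,g_{n-1}$ is precisely $\pi_n^{-1}$ of the closure of $\Lambda$. By Theorem \ref{thm:poly-01}, $\Omega$ is one-dimensional, and since $\pi_n$ is a finite (proper) morphism this pulled-back locus is again one-dimensional; the count $n-1=n-1$ then matches the codimension of a curve in $\C^n$, so the locus is a complete intersection of the expected dimension. Intersecting with $\D^n$ recovers the distinguished set attached to $\Omega$, and I would conclude that the $n-1$ polynomials $g_1,\dots,g_{n-1}$ exhibit $\Omega$ as a set-theoretic complete intersection, as claimed.

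The hard part will be the finite-fiber subtlety: $\pi_n$ is generically $n!$-to-one, so a priori $\pi_n^{-1}(\ov\Lambda)$ is the union of the whole $S_n$-orbit $\{\sigma(\ov\Omega):\sigma\in S_n\}$ of the closure of $\Omega$, rather than $\ov\Omega$ alone, and a single irreducible component of a complete intersection need not inherit the property. To land on $\Omega$ exactly I would argue at the level of the correspondence in Theorem \ref{thm:poly-02}: the generators produced there for $\Omega$ are themselves symmetric, of the form $\tilde g_i=f_i\circ\pi_n$, so pulling back the $\gn$-side generators returns the same symmetrized generating set and yields $V_{S'}\cap\D^n=\Omega$ with $S'=\{g_1,\dots,g_{n-1}\}$. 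Thus inside the polydisc the pullback cuts out precisely $\Omega$, and verifying this coincidence (equivalently, that passing to $\gn$ and back does not enlarge the zero set within $\D^n$) is the one point requiring genuine care; everything else is the dimension bookkeeping recorded above.
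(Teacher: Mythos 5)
Your proposal follows the paper's route exactly: pass to $\Lambda=\pi_n(\Omega)$ via Theorem \ref{thm:poly-02}, invoke Theorem \ref{thm:DVchar} to present $\ov{\Lambda}$ as the zero locus of $n-1$ polynomials $f_1,\dots,f_{n-1}$, and pull back to $g_i=f_i\circ\pi_n$. The paper's entire proof is the one-sentence assertion that $\{f_1\circ\pi_n,\dots,f_{n-1}\circ\pi_n\}$ is a generating set for $\Omega$, so in substance you have reproduced it, and the dimension bookkeeping you add is consistent with Theorem \ref{thm:poly-01}.

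The finite-fiber subtlety you flag at the end is, however, a genuine gap --- and it is present in the paper's own proof as well; your proposed repair does not close it. The common zero set of $g_1,\dots,g_{n-1}$ is $\pi_n^{-1}\bigl(Z(f_1,\dots,f_{n-1})\bigr)$, which is the full $S_n$-orbit $\bigcup_{\sigma\in S_n}\sigma(\ov{\Omega})$, and your claim that symmetric generators yield $V_{S'}\cap\D^n=\Omega$ forces $\Omega$ to be $S_n$-invariant, which an irreducible distinguished variety need not be. Concretely, for fixed $\omega\in\T\setminus\{1\}$ the line $\Omega=\{(z,\omega z):z\in\D\}$ is a distinguished variety in $\D^2$, but any symmetric polynomial vanishing on it also vanishes on $\{(\omega z,z):z\in\D\}$, so the pullback locus has two irreducible components and cuts out strictly more than $\Omega$. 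This is precisely why condition (v) of Theorem \ref{thm:DVpoly} is phrased with the alternative ``or $\Omega$ is an irreducible component of $V_{S'}\cap\D^n$,'' and, as you yourself note, an irreducible component of a set-theoretic complete intersection need not inherit the property. So your argument establishes only that the $S_n$-orbit of $\ov{\Omega}$ is cut out by $n-1$ equations; to conclude for $\Omega$ itself one needs a further argument (isolating the component by additional non-symmetric equations without increasing the count, or a direct construction in $\D^n$), which neither you nor the paper supplies. In the example above the corollary still holds for trivial reasons ($\Omega=Z(z_2-\omega z_1)$), so the statement is not refuted, but the symmetrization route as written does not prove it.
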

This is straight-forward because for any distinguished variety $\Omega$ in $\mathbb D^n$, if $\Lambda=\pi_n(\Omega)$ then $\{f_1\circ \pi_n,\dots, f_{n-1}\circ \pi_n\}$ is a set of generators for $\Omega$ while $\{f_1,\dots,f_{n-1}  \}$ is a generating set for $\Lambda$.\\

Thus, combining Theorem \ref{thm:DVchar}, Theorem \ref{thm:poly-01}, Theorem \ref{thm:poly-02} and Corollary \ref{cor:poly-01}, we obtain the following representation for a distinguished variety in the polydisc. This is a main result of this paper.

\begin{thm}\label{thm:DVpoly}
A set $\Omega \subset \mathbb D^n$ is a distinguished variety in $\mathbb D^n$ if and only if $\Omega$ is an affine algebraic curve in $\mathbb D^n$, a complete intersection and there are $n-1$ complex square matrices $F_1,\dots, F_{n-1}$ of same order such that the following conditions are satisfied:

\begin{itemize}
\item[(i)] $[F_i,F_j]=0$ and $[F_i^*,F_{n-j}]=[F_j^*,F_{n-i}]$, for $1\leq i<j\leq
n-1$ ; \item[(ii)] $\sigma_T(F_1^*+zF_{n-1}, F_2^*+zF_{n-2},\dots,F_{n-1}^*+zF_1, zI)\subseteq \mathbb
G_{n}$, $\forall z\in \mathbb D$ ;
\item[(iii)] the set of polynomials $S=\{ f_1,\dots ,f_{n-1} \}\subset \mathbb C[z_1,\dots,z_n]$, where $f_i=\det\,(F_i^*+pF_{n-i}-s_iI)$, forms a regular sequence ;
\item[(iv)] either the set
\begin{align*}
\quad \quad \quad \Lambda = & \{ (s_1,\dots,s_{n-1},p)\in \mathbb G_n \,: \nonumber
\\& \; (s_1,\dots,s_{n-1}) \in \sigma_T(F_1^*+pF_{n-1}\,,\,
F_2^*+pF_{n-2}\,,\,\dots\,, F_{n-1}^*+pF_1) \},
\end{align*}
coincides with $V_S\cap \mathbb G_n$, or $\Lambda$ is an irreducible component of $V_S\cap \mathbb G_n$ ;
\item[(v)] the set $S'=\{  g_i \in \mathbb C[z_1,\dots, z_n]: g_i=f_i\circ \pi_n \,,i=1,\dots , n-1 \}$ generates $\Omega$, that is, either $\Omega=V_{S'}\cap \mathbb D^n$, or $\Omega$ is an irreducible component of $V_{S'}\cap \mathbb D^n$.

\end{itemize}

\end{thm}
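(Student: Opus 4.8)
The plan is to prove the stated equivalence by assembling the four results invoked just before it---Theorem \ref{thm:DVchar} (the characterization of distinguished varieties in $\mathbb G_n$), Theorem \ref{thm:poly-01}, Theorem \ref{thm:poly-02} and Corollary \ref{cor:poly-01}---using the symmetrization map $\pi_n$ as the bridge between $\mathbb D^n$ and $\mathbb G_n$. Throughout I would exploit the elementary but crucial fact that for any $f\in\mathbb C[z_1,\dots,z_{n-1}]$ one has $\pi_n^{-1}(\{f=0\})=\{f\circ\pi_n=0\}$, so that pulling back the generators of a variety in $\mathbb G_n$ produces the generators of its preimage in $\mathbb C^n$.

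For the necessity, I would begin with a distinguished variety $\Omega\subset\mathbb D^n$. Theorem \ref{thm:poly-01} at once gives that $\Omega$ is a part of an affine algebraic curve, and Corollary \ref{cor:poly-01} that it is a set-theoretic complete intersection. Next, Theorem \ref{thm:poly-02} shows that $\Lambda:=\pi_n(\Omega)$ is a distinguished variety in $\mathbb G_n$, whence the necessity part of Theorem \ref{thm:DVchar} furnishes $n-1$ square matrices $F_1,\dots,F_{n-1}$ of the same order satisfying hypotheses (i)--(iv) of that theorem---these are verbatim conditions (i)--(iv) here---such that $\Lambda$ has the spectral representation \eqref{eq:W}. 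Writing $S=\{f_1,\dots,f_{n-1}\}$ for the corresponding generators of $\Lambda$ in $\mathbb G_n$, the pullback identity above together with $\pi_n(\mathbb D^n)=\mathbb G_n$ gives $V_{S'}\cap\mathbb D^n=\pi_n^{-1}(\Lambda)\cap\mathbb D^n$ for $S'=\{f_i\circ\pi_n\}$; since $\Omega$ equals this preimage (or an irreducible component of it, which is all one can claim when the preimage is reducible), this is exactly condition (v).

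For the sufficiency I would run the same chain in reverse. Given matrices satisfying (i)--(iv), the converse part of Theorem \ref{thm:DVchar} produces a distinguished variety $\Lambda$ in $\mathbb G_n$---namely the set displayed in (iv)---and then Theorem \ref{thm:poly-02} produces a distinguished variety $\Omega'$ in $\mathbb D^n$ with $\pi_n(\Omega')=\Lambda$. It remains to match $\Omega$ with $\Omega'$. Since condition (v) writes $\Omega$ as $V_{S'}\cap\mathbb D^n$ (or an irreducible component) with $S'=\{f_i\circ\pi_n\}$, and since the same generating set cuts out $\pi_n^{-1}(\Lambda)\cap\mathbb D^n=\Omega'$, the set $\Omega$ is $\Omega'$ or an irreducible component of it; being by hypothesis a one-dimensional complete intersection that maps onto $\Lambda$ and exits $\mathbb D^n$ through $\mathbb T^n$ (as $\pi_n(\mathbb T^n)=b\Gamma_n$), it is a distinguished variety in $\mathbb D^n$.

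I expect the only genuine difficulty to be the bookkeeping around irreducibility and components under $\pi_n$, which is a finite $n!$-to-one branched covering: the preimage $\pi_n^{-1}(\Lambda)$ can decompose into several pieces, so the symmetric generators $g_i=f_i\circ\pi_n$ of condition (v) need not by themselves single out one irreducible curve. This is precisely why both (iv) and (v) are phrased to allow $\Lambda$ and $\Omega$ to be irreducible \emph{components} of $V_S\cap\mathbb G_n$ and $V_{S'}\cap\mathbb D^n$ respectively; once this slack is built in, the dimension-one and boundary-behaviour assertions needed to conclude ``distinguished variety'' are delivered back by Theorems \ref{thm:poly-01} and \ref{thm:poly-02}, and no new estimate is required.
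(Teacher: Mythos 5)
Your proposal is correct and follows essentially the same route as the paper's own proof: both directions are obtained by assembling Theorem \ref{thm:DVchar}, Theorem \ref{thm:poly-01}, Theorem \ref{thm:poly-02} and Corollary \ref{cor:poly-01}, with the symmetrization map $\pi_n$ serving as the bridge between $\mathbb D^n$ and $\mathbb G_n$ and condition (v) identifying $\Omega$ with (a component of) $\pi_n^{-1}(\Lambda)\cap\mathbb D^n$. Your additional care about the $n!$-to-one branched covering and the component bookkeeping only makes explicit what the paper leaves as ``evidently'' and ``follows from the proof of Theorem \ref{thm:poly-02}.''
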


\begin{proof}

Let there be $n-1$ complex square matrices of same order such that the conditions $(i)-(iv)$ are satisfied. Then $\Lambda$ is a distinguished variety in $\mathbb G_n$, by Theorem \ref{thm:DVchar}. Evidently condition-$(v)$ implies that $\Lambda = \pi_n(\Omega)$ and thus by Theorem \ref{thm:poly-02}, $\Omega$ is a distinguished variety in $\mathbb D^n$.\\
Conversely, suppose that $\Omega$ is a distinguished variety in $\mathbb D^n$. Then by Theorem \ref{thm:poly-01} and Corollary \ref{cor:poly-01}, $\Omega$ is an affine algebraic curve in $\mathbb D^n$ and is a complete intersection. Also by Theorem \ref{thm:poly-02}, there exists a distinguished variety $\Lambda$ in $\mathbb G_n$ such that $\Lambda=\pi_n(\Omega)$. Again since $\Lambda$ is a distinguished variety in $\mathbb G_n$, by Theorem \ref{thm:DVchar}, there are $n-1$ complex square matrices $F_1,\dots, F_{n-1}$ of same order such that the conditions $(i)-(iv)$ are satisfied. Condition-$(v)$ follows from the proof of Theorem \ref{thm:poly-02}. Hence the proof is complete.

\end{proof}

\begin{rem}\label{rem:AM05}
Needless to mention that the preceding theorem is a generalization of the famous result due to Agler and M\raise.45ex\hbox{c}Carthy (\cite{AM05}, Theorem 1.12) describing the distinguished sets in the bidisc $\mathbb D^2$. Their description of a distinguished set $V$ in $\mathbb D^2$ was given in terms of a matrix-valued rational function in the following way:
\[
V=\{ (z,w)\in\mathbb D^2\,:\, \det(\psi (z)-wI)=0 \},
\]
where $\psi (z)=A+zB(I-zD)^{-1}C$ is an $m \times m$-matrix-valued rational function defined on $\mathbb D$ by the entries of a unitary $(m+n)\times (m+n)$-matrix $U$ given by
\[
U=\begin{pmatrix}
A & B \\
C & D
\end{pmatrix}\,:\, \mathbb C^m \oplus \mathbb C^n \rightarrow \mathbb C^m \oplus \mathbb C^n.
\]
For $n=2$, our representation of a distinguished variety as in Theorem \ref{thm:DVpoly} is simpler and is given by determinant of a matrix-valued polynomial in the following way:
\[
V=\{ (z,w)\in\mathbb D^2\,:\, \det (A^*+Azw -(z+w)I)=0 \},
\]
where $A$ is any square matrix such that each joint eigenvalue $(s,p)$ of $(A^*+Ap, sI)$ is in the symmetrized bidisc $\mathbb G_2$.
\end{rem}

\subsection{The second representation} In this Subsection, we shall use the same notations and terminologies
as in \cite{AM05} introduced by Agler and M$^{\textup{c}}$Carthy.
We say that a function $f$ is \textit{holomorphic} on a
distinguished variety $\Lambda$ in $\mathbb G_n$, if for every
point in $\Lambda$, there is an open ball $B$ in $\mathbb C^n$
containing the point and a holomorphic function $F$ in $n$
variables on $B$ such that $F|_{B\cap \Lambda}=f|_{B \cap
\Lambda}$. We shall denote by $A(\Lambda)$ the Banach algebra of
functions that are holomorphic on $\Lambda$ and continuous on
$\overline{\Lambda}$. This is a closed unital subalgebra of
$C(\partial \Lambda)$ that separates points. The maximal ideal
space of $A(\Lambda)$ is $\overline{\Lambda}$.

For a finite measure $\mu$ on $\Lambda$, let $H^2(\mu)$ be the
closure of polynomials in $L^2(\partial \Lambda, \mu)$. If $G$ is
an open subset of a Riemann surface $S$ and $\nu$ is a finite
measure on $\overline G$, let $\mathcal A^2(\nu)$ denote the
closure in $L^2(\partial G, \nu)$ of $A(G)$. A point $\lambda$ is
said to be a \textit{bounded point evaluation} for $H^2(\mu)$ or
$\mathcal A^2(\nu)$ if evaluation at $\lambda$, \textit{a priori}
defined on a dense set of analytic functions, extends continuously
to the whole Hilbert space $H^2(\mu)$ or $\mathcal A^2(\nu)$
respectively. If $\lambda$ is a bounded point evaluation, then the
function defined by
$$ f(\lambda)=\langle f,k_{\lambda} \rangle $$
is called the \textit{evaluation functional at} $\lambda$. Below we recall a few useful results from the literature.

\begin{lem}[\cite{AM05}, Lemma 1.1]\label{basiclem1}
Let $S$ be a compact Riemann surface. Let $G\subseteq S$ be a
domain whose boundary is a finite union of piecewise smooth Jordan
curves. Then there exists a measure $\nu$ on $\partial G$ such
that every point $\lambda$ in $G$ is a bounded point evaluation
for $\mathcal A^2(\nu)$ and such that the linear span of the
evaluation functional is dense in $\mathcal A^2(\nu)$.
\end{lem}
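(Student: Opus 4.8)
The plan is to take $\nu$ to be the harmonic measure $\omega_{\lambda_0}$ of $G$ at a fixed base point $\lambda_0\in G$ (equivalently one could take arc-length with respect to a smooth conformal metric on $S$, the two being comparable on the smooth arcs of $\partial G$), and then verify the two assertions separately. Recall that $\mathcal A^2(\nu)$ is by definition the $L^2(\partial G,\nu)$-closure of $A(G)$, so that its elements are \emph{a priori} only boundary functions; the real content of the lemma is that this space is in fact a reproducing kernel Hilbert space of genuine holomorphic functions on $G$. Indeed, once bounded point evaluations are known to exist, the density of the span of the evaluation functionals $\{k_\lambda:\lambda\in G\}$ is \emph{equivalent} to the injectivity of the boundary-to-interior map $g\mapsto \hat g$, where $\hat g(\lambda):=\langle g,k_\lambda\rangle$: if $g\perp k_\lambda$ for every $\lambda$ then $\hat g\equiv 0$, and conversely any non-zero element of $\bigcap_\lambda\{k_\lambda\}^\perp$ is exactly a non-zero $g$ with vanishing holomorphic extension.

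First I would establish the bounded point evaluations. For $f\in A(G)$ the function $|f|^2$ is subharmonic on $G$ and continuous up to $\partial G$, so it is dominated by the harmonic extension of its boundary values; evaluating at $\lambda$ against harmonic measure gives
\[
|f(\lambda)|^2 \;\le\; \int_{\partial G}|f|^2\,d\omega_\lambda \;=\; \int_{\partial G}|f|^2\,\frac{d\omega_\lambda}{d\omega_{\lambda_0}}\,d\nu .
\]
By Harnack's inequality applied to the positive harmonic Poisson kernels $\lambda\mapsto \tfrac{d\omega_\lambda}{d\omega_{\lambda_0}}(\zeta)$, this Radon--Nikodym derivative is bounded on $\partial G$ by a constant $C_\lambda$ that stays bounded as $\lambda$ ranges over compact subsets of $G$; crucially this estimate is purely interior and so is insensitive to the corners of the piecewise-smooth boundary. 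Hence $|f(\lambda)|\le C_\lambda^{1/2}\|f\|_{L^2(\nu)}$ for all $f\in A(G)$, and by density the evaluation at $\lambda$ extends to a bounded functional on $\mathcal A^2(\nu)$, represented by some $k_\lambda$. Local boundedness of $C_\lambda$ then shows that $\hat g$ is holomorphic and that $f_n\to g$ in $L^2(\nu)$ forces $f_n\to\hat g$ locally uniformly on $G$.

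It remains to prove the density, i.e. the injectivity of $g\mapsto\hat g$, and this is the main obstacle, since it requires upgrading interior information to the boundary: I must show that if $g\in\mathcal A^2(\nu)$ has $\hat g\equiv 0$ on $G$, then $g=0$ in $L^2(\partial G,\nu)$. Equivalently, I must show that the $L^2(\nu)$-boundary function of an element of $\mathcal A^2(\nu)$ is recovered as the nontangential boundary limit of its holomorphic extension. This is the standard Hardy-space phenomenon on a finite bordered Riemann surface, and I would prove it by localizing: near each smooth boundary arc a conformal chart carries the situation to a half-disc, where the classical nontangential maximal-function estimate holds, and one patches these local estimates with a partition of unity. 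Given $f_n\in A(G)$ with $f_n\to g$ in $L^2(\nu)$ and $f_n\to\hat g=0$ locally uniformly, the uniform $L^2$-bound on the nontangential maximal functions lets one pass to the limit and identify $g$ with the $\nu$-a.e. nontangential limit of $\hat g\equiv 0$, whence $g=0$. Thus $\mathcal A^2(\nu)$ is genuinely a reproducing kernel Hilbert space on $G$ and the span of $\{k_\lambda\}$ is dense. The delicate point throughout is the compatibility of $L^2(\nu)$-convergence with nontangential boundary behaviour, which is exactly where the smoothness hypothesis on $\partial G$ and the F. and M. Riesz / maximal-function machinery enter; alternatively one may quote the established theory of Hardy spaces on finite bordered Riemann surfaces to obtain this compatibility directly.
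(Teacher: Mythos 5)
The paper offers no proof of this statement: it is quoted verbatim as Lemma 1.1 of Agler--M\raise.45ex\hbox{c}Carthy \cite{AM05} and used as imported machinery, so there is no in-paper argument to compare yours against. I can therefore only judge your sketch on its own terms, noting that it is consistent in spirit with the original source (harmonic measure as the candidate $\nu$, plus the Hardy-space theory of finite bordered Riemann surfaces).

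The first half of your argument is sound and essentially complete: subharmonicity of $|f|^2$ for $f\in A(G)$, regularity of the piecewise smooth boundary for the Dirichlet problem, and Harnack's inequality for the kernels $d\omega_\lambda/d\omega_{\lambda_0}$ give bounded point evaluations with locally uniform constants, hence holomorphy of $\hat g$ and the correct equivalence between density of $\operatorname{span}\{k_\lambda\}$ and injectivity of $g\mapsto\hat g$. The second half is where the real content lies, and there your argument is an outline rather than a proof: the identification of $g\in\mathcal A^2(\nu)$ with the nontangential boundary values of $\hat g$ is exactly the hard step, and you defer it to ``the established theory of Hardy spaces on finite bordered Riemann surfaces.'' That is a legitimate citation (Rudin, Voichick, Hasumi, Fisher, Abrahamse--Douglas all cover it), but two points deserve care if you want to make it self-contained. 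First, patching nontangential maximal estimates with a partition of unity is delicate because cutoffs destroy holomorphy; the standard localization goes through the global least harmonic majorant of $|f|^2$ rather than through localized copies of $f$. Second, you should say explicitly that the finitely many corners of the piecewise smooth boundary carry no $\nu$-mass, so the local half-disc model applies $\nu$-almost everywhere. With those repairs the sketch is a correct proof modulo the quoted Hardy-space theory --- which is the same epistemic status the lemma has in the paper under review, where it is simply cited from \cite{AM05}.
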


\begin{lem} [\cite{D:Sch}, Theorem 3.1]  \label{basiclem2}
Let $\Omega$ be a one-dimensional distinguished algebraic set in
$\mathbb D^n$. Then there exists a measure $\mu$ on $\partial
\Omega $ such that every point in $\Omega$ is a bounded point
evaluation for $H^2(\mu)$ and such that the span of the bounded
evaluation functionals is dense in $H^2(\mu)$.
\end{lem}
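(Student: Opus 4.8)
The plan is to transport the problem from the algebraic curve $\Omega$ sitting in $\mathbb D^n$ to a domain on a compact Riemann surface and then invoke Lemma \ref{basiclem1}. First I would take the one-dimensional algebraic set $W$ with $\Omega = W \cap \mathbb D^n$ and pass to its normalization: let $S$ be the compact Riemann surface obtained by normalizing the projective closure of $W$, with normalization map $\phi : S \to \mathbb P^n$ whose image on the affine chart recovers $\overline W \subset \mathbb C^n$. Since $\phi$ is a finite, hence proper, holomorphic map that is biholomorphic off a finite set, the set $G := \phi^{-1}(\Omega)$ is open in $S$ with compact closure $\overline G = \phi^{-1}(\overline \Omega)$ and boundary $\partial G = \phi^{-1}(W \cap \mathbb T^n) = \phi^{-1}(\partial \Omega)$.

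The crucial step is to verify that $G$ is a domain, or a finite union of domains, whose boundary is a finite union of piecewise smooth Jordan curves, so that Lemma \ref{basiclem1} applies. Here the distinguished hypothesis is exactly what is needed: because $\Omega$ exits $\mathbb D^n$ only through the torus, every point of $\overline \Omega \setminus \Omega$ lies in $\mathbb T^n$, so on $S$ the boundary $\partial G$ is cut out by the real-analytic equations $|z_i \circ \phi| = 1$. Away from the finitely many ramification points of $\phi$ and singular points of $W$, each nonconstant coordinate $z_i \circ \phi$ is holomorphic, so the locus $\{|z_i \circ \phi| = 1\}$ is a real-analytic $1$-manifold; algebraicity forces the exceptional set to be finite, whence $\partial G$ is a finite union of piecewise smooth Jordan curves. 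I expect this boundary-regularity verification, namely controlling the behaviour at ramification and singular points and confirming that the arcs close up into Jordan curves, to be the main technical obstacle.

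Granting this, Lemma \ref{basiclem1}, applied to each connected component of $G$ and summed, produces a measure $\nu$ on $\partial G$ for which every $\lambda \in G$ is a bounded point evaluation of $\mathcal A^2(\nu)$ and for which the linear span of the evaluation functionals is dense. I would then set $\mu := \phi_* \nu$, the push-forward to $\partial \Omega$. The change-of-variables identity $\int |f|^2 \, d\mu = \int |f \circ \phi|^2 \, d\nu$ makes pullback $f \mapsto f \circ \phi$ an isometry of $L^2(\partial \Omega, \mu)$ into $L^2(\partial G, \nu)$ that carries $H^2(\mu)$ into $\mathcal A^2(\nu)$, since every polynomial pulls back to a function holomorphic on $G$. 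For $p \in \Omega$, pick any $\lambda$ with $\phi(\lambda) = p$; then $|f(p)| = |(f \circ \phi)(\lambda)| \leq C_\lambda \|f \circ \phi\|_{\mathcal A^2(\nu)} = C_\lambda \|f\|_{H^2(\mu)}$, so $p$ is a bounded point evaluation for $H^2(\mu)$.

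Finally, for the density of the evaluation functionals, suppose $f \in H^2(\mu)$ is orthogonal to $k_p$ for every $p \in \Omega$, that is $f(p) = \langle f, k_p \rangle = 0$ for all $p$. Then $f \circ \phi$ vanishes at every $\lambda \in G$, hence $\langle f \circ \phi, k_\lambda \rangle = 0$ for all $\lambda$; since the span of these $k_\lambda$ is dense in $\mathcal A^2(\nu)$ we get $f \circ \phi = 0$ in $\mathcal A^2(\nu)$, and by the isometry $f = 0$ in $H^2(\mu)$. Thus the span of the bounded evaluation functionals is dense in $H^2(\mu)$, which completes the argument.
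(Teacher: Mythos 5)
The paper does not prove this lemma at all --- it is imported verbatim from Scheinker \cite{D:Sch}, Theorem 3.1, whose proof (modelled on Agler--M\raise.45ex\hbox{c}Carthy's Lemma 1.2 for the bidisc) is precisely the argument you give: desingularize the curve to a compact Riemann surface, invoke Lemma \ref{basiclem1} on the lifted domain, and push the resulting measure forward along the normalization map. Your write-up is correct and follows that standard route, and you rightly identify the verification that the lifted boundary is a finite union of piecewise smooth Jordan curves as the only step requiring genuine technical care.
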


\begin{lem} [\cite{B-K-S}, Lemma 7.6] \label{lemeval}
Let $\Omega$ be a one-dimensional distinguished algebraic set in
$\mathbb D^n$, and let $\mu$ be the measure on $\partial \Omega$
given as in Lemma \textup{\ref{basiclem2}}. A point
$(y_1,\dots,y_n) \in \mathbb D^n$ is in $\Omega$ if and only if
$(\bar y_1, \dots, \bar y_n)$ is a joint eigenvalue of the commuting tuple
$\left( M_{z_1}^*,\dots, M_{z_{n}}^* \right)$.
\end{lem}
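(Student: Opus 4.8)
The plan is to exploit the reproducing-kernel structure of $H^2(\mu)$ furnished by Lemma \ref{basiclem2}. Since $\partial \Omega$ is a compact subset of $\overline{\mathbb D^n}$, each coordinate function $z_i$ is bounded on $\partial \Omega$, so multiplication by $z_i$ is a bounded operator on $L^2(\partial \Omega,\mu)$ that carries polynomials to polynomials and hence restricts to a bounded operator $M_{z_i}$ on the closure $H^2(\mu)$; these operators obviously commute. For a point $\lambda=(y_1,\dots,y_n)\in\Omega$, which by Lemma \ref{basiclem2} is a bounded point evaluation, let $k_\lambda\in H^2(\mu)$ be the associated evaluation functional, so that $f(\lambda)=\langle f,k_\lambda\rangle$ for every $f\in H^2(\mu)$.

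For the forward implication I would compute the action of $M_{z_i}^*$ on $k_\lambda$ directly. For every $f\in H^2(\mu)$,
\[
\langle f, M_{z_i}^* k_\lambda\rangle=\langle M_{z_i}f, k_\lambda\rangle=(z_i f)(\lambda)=y_i\,f(\lambda)=y_i\langle f,k_\lambda\rangle=\langle f,\overline{y_i}\,k_\lambda\rangle,
\]
whence $M_{z_i}^* k_\lambda=\overline{y_i}\,k_\lambda$ for each $i$. Since $\langle 1,k_\lambda\rangle=1$, the vector $k_\lambda$ is nonzero, and therefore $(\bar y_1,\dots,\bar y_n)$ is a joint eigenvalue of $(M_{z_1}^*,\dots,M_{z_n}^*)$ with joint eigenvector $k_\lambda$.

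The converse is where the real work lies. Suppose $(\bar y_1,\dots,\bar y_n)$ is a joint eigenvalue with a nonzero joint eigenvector $v$, so that $M_{z_i}^* v=\bar y_i v$ for each $i$. Passing to adjoints gives $\langle z_i f, v\rangle=y_i\langle f,v\rangle$ for all $f$, so the linear functional $L(p)=\langle p,v\rangle$ on polynomials satisfies $L(z_i p)=y_i L(p)$; iterating over monomials yields $L(p)=p(y)\,L(1)$ for every polynomial $p$, where $y=(y_1,\dots,y_n)$. If $L(1)=0$ then $L$ annihilates all polynomials, and since polynomials are dense in $H^2(\mu)$ this forces $v=0$, a contradiction, so $L(1)\neq 0$. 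It then remains to locate $y$ inside $\Omega$. Here I would use that $\Omega=W\cap\mathbb D^n$ for the generating algebraic set $W$, together with the fact that $\operatorname{supp}\mu\subseteq\partial\Omega\subseteq W$ by the very definition of the boundary of a distinguished set. Consequently every generating polynomial $f$ of $W$ vanishes $\mu$-almost everywhere and is thus the zero element of $H^2(\mu)$, giving $0=L(f)=f(y)\,L(1)$ and hence $f(y)=0$. As this holds for every generator of $W$, we obtain $y\in W$, and since $y\in\mathbb D^n$ we conclude $y\in W\cap\mathbb D^n=\Omega$.

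I expect the main obstacle to be exactly this last step of the converse: passing from the abstract statement that evaluation at $y$ is a bounded, multiplicative functional on $H^2(\mu)$ to the conclusion that $y$ is an honest point of the variety $\Omega$, rather than some spurious point of $\mathbb D^n$. The crux is the observation that the generating polynomials of $W$, being zero on $\operatorname{supp}\mu$, collapse to the zero vector of $H^2(\mu)$, which forces the functional $L$ to read them as vanishing at $y$; the density of the evaluation functionals asserted in Lemma \ref{basiclem2} is what guarantees that no information is lost when one passes to the Hilbert-space closure.
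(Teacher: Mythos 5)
The paper offers no proof of this lemma---it is imported verbatim from \cite{B-K-S}, Lemma 7.6---so there is no in-paper argument to compare against; your proof is correct and is essentially the standard kernel-function argument used there (and, for $n=2$, in Agler--M\raise.45ex\hbox{c}Carthy). In the forward direction the kernel functions at points of $\Omega$ are joint eigenvectors of $(M_{z_1}^*,\dots,M_{z_n}^*)$, and in the converse your key step is sound: a joint eigenvector yields a functional $L$ with $L(p)=p(y)L(1)$ and $L(1)\neq 0$, and since each generator of $W$ vanishes on $\operatorname{supp}\mu\subseteq\partial\Omega\subseteq W$ it is the zero vector of $H^2(\mu)$, forcing it to vanish at $y$; combined with the hypothesis $y\in\mathbb D^n$ built into the statement, this places $y$ in $W\cap\mathbb D^n=\Omega$.
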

\begin{lem}\label{lempure}
Let $\Lambda$ be a one-dimensional distinguished algebraic set in
$\mathbb D^n$, and let $\mu$ be the measure on $\partial \Omega$
given as in Lemma \textup{\ref{basiclem2}}. The multiplication
operator tuple $(M_{z_1},\dots, M_{z_{n}})$ on $H^2(\mu)$,
defined as multiplication by the co-ordinate functions, is a pure isometric tuple, i.e. $\prod_{i=1}^n M_{z_i}$ is a pure isometry on $H^2(\mu)$.
\end{lem}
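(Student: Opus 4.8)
The plan is to establish the two parts of the claim separately: first that $(M_{z_1},\dots,M_{z_n})$ is a commuting isometric tuple, and then that the product $M_z:=\prod_{i=1}^n M_{z_i}$ is pure, the latter by producing a dense family of eigenvectors of $M_z^*$ whose eigenvalues all lie strictly inside the unit disc.

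First I would record that, since $\Omega$ is a distinguished set in $\mathbb D^n$, its boundary satisfies $\partial\Omega \subseteq b\overline{\mathbb D^n}=\mathbb T^n$, so each coordinate function $z_i$ is unimodular $\mu$-almost everywhere on $\partial\Omega$. As each $z_i$ is a polynomial, $M_{z_i}$ carries polynomials to polynomials and hence extends to a bounded operator on $H^2(\mu)$; moreover $\|M_{z_i}f\|^2=\int_{\partial\Omega}|z_i|^2|f|^2\,d\mu=\|f\|^2$, so each $M_{z_i}$ is an isometry. These operators commute because multiplication operators commute, whence $M_z=M_{z_1\cdots z_n}$ is again an isometry, the product $z_1\cdots z_n$ being unimodular on $\partial\Omega$. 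This settles that the tuple is isometric, and it remains only to prove purity of $M_z$.

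For purity I would invoke Lemmas \ref{basiclem2} and \ref{lemeval}. By Lemma \ref{lemeval}, for each $y=(y_1,\dots,y_n)\in\Omega$ the reproducing kernel $k_y\in H^2(\mu)$ is a joint eigenvector of $(M_{z_1}^*,\dots,M_{z_n}^*)$ with $M_{z_i}^*k_y=\bar y_i k_y$; consequently $M_z^*k_y=\overline{y_1\cdots y_n}\,k_y$. Since $y\in\mathbb D^n$ we have $|y_1\cdots y_n|<1$, so $(M_z^*)^k k_y=(\overline{y_1\cdots y_n})^k k_y\to 0$ as $k\to\infty$. By Lemma \ref{basiclem2} the span of the evaluation functionals $\{k_y:y\in\Omega\}$ is dense in $H^2(\mu)$, and any $g$ in this span is a finite sum of such eigenvectors, so $(M_z^*)^k g\to 0$ on the dense linear span.

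Finally I would upgrade this to strong convergence on all of $H^2(\mu)$. Because $M_z$ is an isometry we have $\|(M_z^*)^k\|\le 1$ for every $k$, so the family $\{(M_z^*)^k\}$ is uniformly bounded; combined with convergence to $0$ on the dense span, a routine $\varepsilon/2$ approximation argument forces $(M_z^*)^k f\to 0$ for every $f\in H^2(\mu)$. Hence $M_z=\prod_{i=1}^n M_{z_i}$ is pure and the tuple is a pure isometric tuple. I expect the only step needing genuine care to be this last passage from the dense set to the whole space, which rests entirely on the uniform bound $\|(M_z^*)^k\|\le 1$; the eigenvector computation and the density are immediate from the quoted lemmas.
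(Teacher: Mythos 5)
Your proposal is correct and follows essentially the same route as the paper: show $M_z=\prod_{i=1}^n M_{z_i}$ is an isometry, observe that each kernel function $k_y$ ($y\in\Omega$) is an eigenvector of $M_z^*$ with eigenvalue $\overline{y_1\cdots y_n}$ of modulus less than one, and combine the density of the span of these kernels (Lemma \ref{basiclem2}) with the uniform bound $\|(M_z^*)^k\|\le 1$ to conclude $(M_z^*)^k\to 0$ strongly. You are merely more explicit than the paper about why each $M_{z_i}$ is isometric (support of $\mu$ in $\mathbb T^n$) and about the $\varepsilon/2$ passage from the dense span to all of $H^2(\mu)$, which the paper leaves implicit.
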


\begin{proof}
Evidently $(M_{z_1}, \dots , M_{z_n})$ on $H^2(\mu)$ is a commuting tuple of isometries and so their product $\prod_{i=1}^n M_{z_i}=M_z$ is also an isometry. By a standard computation, for every
$\overline y=(y_1,\dots, y_n) \in \Omega$, the kernel function
$k_{\bar y}$ is an eigenfunction of $M_{z}^*$ corresponding to the
eigenvalue $y=y_1.y_2 \dots y_n$. Therefore,
\[
(M_{z}^*)^jk_{\overline y}=y^jk_{\overline y}
\rightarrow 0 \; \textup{ as } j \rightarrow \infty,
\]
because $|y|<1$. Since the evaluation
functionals $k_{\overline y}$ are dense in $H^2(\mu)$, this shows
that $M_{z}$ is a pure isometry.
Consequently $(M_{z_1},\dots, M_{z_{n}})$ is a pure isometric tuple on $H^2(\mu)$.

\end{proof}

In \cite{B-K-S}, a description for a distinguished set having complex dimension $1$ is given (see Theorem 7.3 in \cite{B-K-S}) in terms of a symmetric variety in $\mathbb A_\C^n$. Also, in the previous Subsection, we proved that a distinguished set in $\D^n$ is always one-dimensional. So, \cite{B-K-S} indeed describes all distinguished sets in $\D^n$. However, the second representation of a distinguished set that we are going to present here is framed differently with an independent proof. Let us mention that $\mathcal M_d(\C)$ denotes the space of $d \times d$ complex matrices.

\begin{thm} \label{thm:poly-DVchar-2}

Let $P_1, \dots , P_n$ be orthogonal projections and $U_1, \dots , U_n$ be commuting unitaries in $\mathcal M_d(\C)$ with $\prod_{i=1}^n U_i=I_{\C^d}$ satisfying
\begin{enumerate}
		\item $P_i^{\perp}U_i^*P_j^{\perp}U_j^*=P_j^{\perp}U_j^*P_i^{\perp}U_i^*$ ,
		\item $U_iP_iU_jP_j=U_jP_jU_iP_i$ ,
		\item  $P_1+U_1^*P_2U_1+U_1^*U_2^*P_3U_2U_1+\ldots +U_1^*U_2^*\ldots U_{n-1}^*P_nU_{n-1}\ldots U_2U_1 =I_{\mathcal{D}_T}$. 
	\end{enumerate}
	Then with the notation $z=\prod_{i=1}^n z_i$, the set
	\begin{equation}\label{eq:PDv-1}
\Omega =\{(z_1, \dots , z_n)\in \D^n \,: \, (z_1, \dots , z_n) \in \sigma_T(U_1P_1^{\perp}+zU_1P_1\,, \dots \,,U_nP_n^{\perp}+zU_nP_n) \}
\end{equation}
	is a distinguished set in $\D^n$.
	
	Conversely, every distinguished set in $\D^n$ is of the form (\ref{eq:PDv-1}) for a set of orthogonal projections $P_1, \dots , P_n$ and commuting unitaries $U_1, \dots , U_n$ from $\mathcal M_d(\C)$ for some $d\in \mathbb N$ with $\prod_{i=1}^n U_i=I_{\C^d}$ satisfying the above conditions $(1)-(3)$. 
	
	\end{thm}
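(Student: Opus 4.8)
The plan is to prove both implications through the Berger--Coburn--Lebow model (Theorem \ref{BCL}) together with the bounded--point--evaluation machinery of Lemmas \ref{basiclem2}--\ref{lempure}, exploiting two elementary facts about commuting matrices: the Taylor joint spectrum is the joint point spectrum, and $\sigma_T(A_1^*,\dots,A_n^*)=\{(\bar\lambda_1,\dots,\bar\lambda_n):(\lambda_1,\dots,\lambda_n)\in\sigma_T(A_1,\dots,A_n)\}$. Throughout I write $\phi_i(z)=U_iP_i^{\perp}+zU_iP_i$, so that the membership condition in $(\ref{eq:PDv-1})$ reads $(z_1,\dots,z_n)\in\sigma_T(\phi_1(z),\dots,\phi_n(z))$ with $z=\prod_i z_i$.

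For the forward direction I would first note that conditions $(1)$--$(3)$ together with $\prod_iU_i=I$ are precisely the relations forcing the analytic Toeplitz operators $V_i:=T_{\phi_i}$ on $H^2(\C^d)$ to be commuting isometries whose product $V=\prod_iV_i$ is the pure shift $T_z$; this is the content of the Berger--Coburn--Lebow analysis, cf. Theorem \ref{puredil}. In particular $\phi_1(z)\cdots\phi_n(z)=zI_{\C^d}$ and the matrices $\phi_1(z),\dots,\phi_n(z)$ commute for every $z$. Evaluating $V_i^*$ on the kernels $k_w\otimes\eta$ gives $V_i^*(k_w\otimes\eta)=k_w\otimes\phi_i(w)^*\eta$, so a point $(z_1,\dots,z_n)\in\D^n$ lies in $\Omega$ exactly when $(\bar z_1,\dots,\bar z_n)$ is a joint eigenvalue of $(V_1^*,\dots,V_n^*)$ at $w=\prod_iz_i\in\D$. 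The identity $\prod_i\phi_i(z)=zI$ makes the constraint $w=\prod_iz_i$ automatic and shows that the joint-spectrum condition is polynomial in $(z_1,\dots,z_n)$, exhibiting $\Omega$ as the $\D^n$-part of an algebraic set. Fixing the value $z=\prod_iz_i$ leaves only finitely many joint eigenvalues, so $\Omega$ is a finite-sheeted family over the disc and hence one-dimensional.

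The delicate point is that $\Omega$ exits $\overline{\D^n}$ only through $\T^n$. When $|z|=1$ each $\phi_i(z)=U_i(P_i^{\perp}+zP_i)$ is unitary, so every joint eigenvalue has all coordinates of modulus one; thus the genuine sheets reach the boundary only over $\partial\D$ and land in $\T^n$. The real work is to exclude boundary points of the Zariski closure of $\Omega$ lying over $|z|<1$ with some $|z_i|=1$: a norm computation shows that such an eigenvalue forces its eigenvector $v$ into $\ker P_i$ with $U_iv=z_iv$, i.e. it is a $z$-independent eigendirection generating a spurious component which meets $|z_i|=1$ and is therefore disjoint from $\D^n$, so it does not belong to the Zariski closure of $\Omega\cap\D^n$. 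I expect the separation of the genuine sheets from these constant eigendirections, and the verification that the closure of the genuine part meets $\partial\overline{\D^n}$ only in $\T^n$, to be the technical heart and the main obstacle of the forward implication.

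For the converse I would start from a distinguished set $\Omega$, which is one-dimensional by Corollary \ref{cor:001}, and form the Hilbert space $H^2(\mu)$ of Lemma \ref{basiclem2}. By Lemma \ref{lempure} the coordinate multiplication tuple $(M_{z_1},\dots,M_{z_n})$ is a commuting isometric tuple whose product $M_z$ is a pure isometry; finite-sheetedness of the curve over the product coordinate (Lemma \ref{lem:ag1}) makes the wandering subspace $\mathcal D_{M_z^*}$ finite-dimensional, say of dimension $d$. Applying Theorem \ref{BCL} produces projections $P_1,\dots,P_n$ and commuting unitaries $U_1,\dots,U_n$ in $\mathcal M_d(\C)$ with $\prod_iU_i=I$ and $M_{z_i}\cong T_{\phi_i}$, and the commutativity of the model isometries together with $M_z=T_z$ is exactly conditions $(1)$--$(3)$. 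Finally, Lemma \ref{lemeval} identifies $\Omega$ with the conjugate joint point spectrum of $(M_{z_1}^*,\dots,M_{z_n}^*)$; transporting this through the unitary equivalence and evaluating $T_{\phi_i}^*$ on the kernels $k_w\otimes\eta$ yields $\phi_i(w)^*\eta=\bar z_i\eta$ with $w=\prod_iz_i$, which is precisely the membership condition of $(\ref{eq:PDv-1})$. The only input here beyond the cited lemmas is the finite-dimensionality of $\mathcal D_{M_z^*}$, which I would secure from Lemma \ref{lem:ag1}.
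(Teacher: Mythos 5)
Your converse argument is essentially the paper's own: one-dimensionality from Corollary \ref{cor:001}, passage to $H^2(\mu)$ via Lemma \ref{basiclem2}, purity of the product isometry from Lemma \ref{lempure}, finite dimension of the wandering subspace, the Berger--Coburn--Lebow model, and recovery of $\Omega$ through Lemma \ref{lemeval}. One caveat: $\dim\mathcal D_{M_z^*}<\infty$ does not follow by merely citing Lemma \ref{lem:ag1}. The paper first passes to the auxiliary curve $V_{\widetilde S}\subset\C^{n+1}$ with $z_{n+1}=\prod_i z_i$, checks its fibres over $z_{n+1}=p$ are finite and uniformly bounded by some $\widetilde k$, extracts monic degree-$k$ relations in each $z_i$ modulo $\mathrm{Ran}\,M_{z}$, and concludes $H^2(\mu)=\mathrm{span}\{z_1^{i_1}\cdots z_n^{i_n}:0\le i_j\le\widetilde k-1\}+\mathrm{Ran}\,M_z$; that spanning argument is the actual content and should appear in your write-up.

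The forward direction is where your proposal is incomplete, and you have put your finger on exactly the delicate step. The paper's treatment is a single estimate: for a unit joint eigenvector $\xi$ one has $z_i=\langle(P_i^{\perp}+zP_i)\xi,U_i^*\xi\rangle$, and the paper asserts $\|P_i^{\perp}+zP_i\|<1$ for $|z|<1$, so $|z_i|<1$ and the zero set can meet $\partial\ov{\D^n}$ only in $\T^n$. But the computation actually gives $\|(P_i^{\perp}+zP_i)\xi\|^2=1-(1-|z|^2)\|P_i\xi\|^2$, which is strictly less than $1$ only when $P_i\xi\neq0$; in the degenerate case $P_i\xi=0$ one gets $U_i\xi=z_i\xi$ with $|z_i|=1$ --- precisely the ``spurious eigendirections'' you describe. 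Your proposal stops there (``I expect \dots to be the technical heart and the main obstacle''), so the claim $\ov{\Omega}\cap\partial\ov{\D^n}\subseteq\T^n$ is not established in your argument. To close it you must either show that conditions $(1)$--$(3)$ preclude a common joint eigenvector annihilated by some $P_i$ from contributing a component whose closure meets $\ov{\D^n}$, or argue that such eigenvalues are independent of $z$, hence produce components of the algebraic set disjoint from $\D^n$ which do not belong to (and must be excised from) the Zariski closure of $\Omega\cap\D^n$. Carrying out one of these is required; as it stands the forward implication has a genuine gap, one that the paper itself papers over with a norm inequality that fails exactly in the case you isolated.
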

	
	\begin{proof}
	
Suppose there are orthogonal projections $P_1, \dots , P_n$ and commuting unitaries in $\mathcal M_d(\C)$ with $\prod_{i=1}^n U_i=I_{\C^d}$ satisfying the above conditions $(1)-(3)$. Note that conditions (1) \& (2) guarantee the commutativity of the matrix pencils $U_1P_1^{\perp}+zU_1P_1\,, \dots \,,U_nP_n^{\perp}+zU_nP_n$. Thus, the Taylor joint spectrum of the matrix pencils $\sigma_T(U_1P_1^{\perp}+zU_1P_1\,, \dots \,,U_nP_n^{\perp}+zU_nP_n)$ is nonempty. Condition-(3) implies that $\prod_{i=1}^n (U_iP_i^{\perp}+zU_iP_i) = zI$. We now show that for any $z\in \D$, if $(z_1, \dots , z_n) \in \sigma_T(U_1P_1^{\perp}+zU_1P_1\,, \dots \,,U_nP_n^{\perp}+zU_nP_n)$, where $z=\prod_{i=1}^nz_i$ , then each $z_i \in \D$. Suppose $\xi$ is a unit joint eigenvector corresponding to the joint eigenvalue $(z_1, \dots , z_n)$. Then for each $i=1, \dots , n$, we have
$
(U_iP_i^{\perp}+zU_iP_i)\xi = z_i \xi.
$
Taking inner product with $\xi$ we have that
\begin{equation} \label{eqn:401}
\langle (P_i^{\perp}+zP_i)\xi , U_i^*\xi \rangle = z_i.
\end{equation}
Since $|z|<1$, we have that $\|P_i^{\perp}+zP_i\|<1$. This is because, for any unit vector $x= x_1 \oplus x_2 \in \HS$, where $x_1 \in Ran\, P_i$ and $x_2\in Ran\, P_i^{\perp}$, we have
\[
\|(P_i^{\perp}+zP_i)x\|=\| (P_i^{\perp}+zP_i)(x_1 \oplus x_2) \|=\| P_i^{\perp}x_2 \oplus zP_ix_1 \|=\sqrt{\|P_i^{\perp}x_2\|^2+ |z|\|P_ix_1\|^2} <1.
\]
Therefore, from (\ref{eqn:401}) we have that
\[
|z_i|= \left| (P_i^{\perp}+zP_i)\xi , U_i^*\xi \rangle \right| \leq \| (P_i^{\perp}+zP_i)\xi \|\| U_i^*\xi \| \leq \| P_i^{\perp}+zP_i\| <1.
\]
Therefore, $|z_i|<1$ for each $i=1, \dots , n$. This shows that the algebraic set $Z(f_1, \dots , f_n)$ generated by the polynomials $\{ f_i= U_iP_i^{\perp}+zU_iP_i\;:\; i=1, \dots , n \}$, intersects $\D^n$ and exits through the distinguished boundary $\T^n$. Hence, $\Omega$ is a distinguished set in $\D^n$.\\
 
Conversely,	suppose $\Omega$ is a distinguished set in $\D^n$. Then by Theorem \ref{thm:poly-01} and Corollary \ref{cor:001}, we have that $\Omega$ has dimension $1$. Let $\Omega=V_S\cap \mathbb D^n$ for a set of
polynomials
$S\subseteq \mathbb C[z_1,\dots,z_n]$. Since $V_S$ is an affine algebraic variety in
$\mathbb C^n$, by a celebrated theorem due to Eisenbud and Evans (see \cite{Eisenbud}), there are $n$ polynomials $f_1, \dots , f_n \in \mathbb C[z_1,\dots , z_n]$ generating $V_S$. Therefore, without loss of
generality we can choose $S$ to be the set $\{f_1,\dots, f_n\}$ and thus have
\[
\Omega =\{ (z_1,\dots, z_n)\in \mathbb D^n \,:\,
f_i(z_1,\dots,z_n)=0\,,\, i=1,\dots,n \}.
\]
We show that the polynomials $f_1,\dots ,f_n$ do not have a non-constant common factor that intersects $\mathbb D^n$. If $f_1,\dots , f_n$ have a non-constant common factor say $g$ that intersects $\mathbb D^n$ and if $(t_1, \dots , t_n) \in Z(g)\cap \D^n$, then considering the intersection of the hyperplane $z_n=t_n$ with the set $Z(g)\cap \D^n$ and following an argument similar to that in the proof of Theorem \ref{thm:poly-01}, we see that $g$ intersects $\partial \D^n \setminus \T^n$. This contradicts the fact that $\Omega$ is a distinguished set in $\D^n$. So, $f_1,\dots , f_n$ do not have any non-constant common factor.\\

Let $f_{n+1}\in \C[z_1, \dots , z_{n+1}]$ be the polynomial $z_{n+1}=\prod_{i=1}^n z_i=z_1\dots z_n$ and let $\widetilde S =\{ f_1, \dots , f_{n+1} \}$. Then
\[
V_{\widetilde S}=\{ (z_1, \dots , z_n, \Pi_{i=1}^n z_i)\,:\, (z_1, \dots , z_n)\in V_S \}.
\]
Now $V_{\widetilde S}\cap \D^{n+1} \neq \emptyset$ as $\Omega=V_S\cap \mathbb D^n \neq \emptyset $ and $V_{\widetilde S} \cap \partial \D^{n+1}= V_{\widetilde S} \cap \T^{n+1}$, because, $V_S\cap \partial \D^n=V_S \cap \T^n$. This shows that the set
\[
V_{\widetilde S}\cap \D^{n+1}=\{ (z_1, \dots , z_{n+1})\in \C^{n+1} \,:\, (z_1, \dots , z_n) \in  \Omega \; \; \& \; \; z_{n+1}=\Pi_{i=1}^n z_i \}
\]
is a distinguished set in $\D^{n+1}$. Evidently $V_{\widetilde S}$ has complex dimension $1$ and thus, for any complex number $p$, the intersection of $V_{\widetilde S}$ with the hyperplane $z_{n+1}=p$ is either equal to whole $V_{\widetilde S}$ or consists of finitely many points. If the intersection is equal to $V_{\widetilde S}$ then $V_{\widetilde S}\cap \D^{n+1}$ consists of points having the $(n+1)$-th coordinate equal to $p$. Now if $|p|<1$ then it follows that $V_{\widetilde S}$ intersects $\D^{n+1}$ but does not exit through the $(n+1)$-torus $\T^{n+1}$, a contradiction as every point on $\T^{n+1}$ is of unit modulus. On the other hand if $|p|\geq 1$, then $V_{\widetilde S}$ does not intersect $\D^{n+1}$, a contradiction to the fact that $V_{\widetilde S} \cap \D^{n+1}$ is a distinguished set in $\D^{n+1}$. Thus, the intersection of $V_{\widetilde S}$ with the hyperplane $z_{n+1}=p$ consists of finitely many points for any $p \in \C$. By Lemma \ref{lem:ag1}, there exists a positive integer $\tilde k$ such that the number of points in the intersection of $V_{\widetilde S}$ with $z_{n+1}=p$ does not exceed $\tilde k$ for any $p \in \C$. So, for the point $p\in \C$, if there are $k$ such points, say
\begin{gather*}
\left( z_{11}(p), z_{21}(p),\dots ,z_{n\,1}(p), p \right),\\
 (z_{12}(p),z_{22}(p),\dots, z_{n\,2}(p), p),\\
 \vdots \\
  (z_{1k}(p), z_{2k}(p), \dots ,z_{n \, k}(p), p),
\end{gather*}
then the points
\begin{gather*}
(z_{11}(p), z_{21}(p),\dots ,z_{n\,1}(p)),\\
(z_{12}(p),z_{22}(p),\dots, Z_{n\,2}(p)),\\
\vdots \\
(z_{1k}(p), z_{2k}(p), \dots , z_{n \, k}(p))
\end{gather*}
lies in the intersection of the sets of zeros of the following polynomials:
\begin{align*}
&g_1=(z_1-z_{11}(p))(z_1-z_{12}(p))\dots (z_1-z_{1\,k}(p))\,, \\
& g_2=(z_2-z_{21}(p))(z_2-z_{22}(p))\dots (z_2-z_{2\,k}(p))\,,\\
& \vdots \\
& g_{n}=(z_{n}-z_{n\,1}(p))(z_{n}-z_{n\, 2}(p))\dots (z_{n}-z_{n \,k}(p))\,.
\end{align*}
Considering the first polynomial, i.e. $g_1=0$ we see that 
\[
z_1^k \in {\text{span}}\,\{ 1,z_1,\dots, z_1^{k-1} \}+{Ran}\,M_{p}.
\]
Since $k \leq \widetilde k$, it follows that
\[
z_1^{\widetilde k} \in {\text{span}}\,\{ 1,z_1,\dots, z_1^{{\widetilde k}-1} \}+{Ran}\,M_{p}.
\]
Since this holds for any $z_{n+1}=p \in \C$, we have that
\[
z_1^{\widetilde k} \in {\text{span}}\,\{ 1,z_1,\dots, z_1^{{\widetilde k}-1} \}+{Ran}\,M_{z_{n+1}}.
\]
A similar argument holds for $z_2,\dots, z_{n}$ if we consider $g_2=0, \dots , g_{n}=0$ respectively. Therefore, we have
\begin{equation}\label{eqn:d01}
z_i^{\widetilde k}\in {\text{span}}\,\{ 1,z_i,\dots, z_i^{{\widetilde k}-1} \}+{Ran}\,M_{z_{n+1}}\,,\; \text{ for } i=1,\dots, n.
\end{equation}
Thus, it follows from (\ref{eqn:d01}) that for any non-negative integers $k_1,\dots , k_{n} $,
\[
z_1^{k_1}\dots z_{n}^{k_{n}}\in \text{span}\, \{ z_1^{i_1}z_2^{i_2}\dots z_{n}^{i_{n}}\,:\,0 \leq i_1,\dots,i_{n} \leq {\widetilde k}-1 \} +{Ran} \, M_{z_{n+1}}.
\]
Since $z_{n+1}=\prod_{n=1}^n z_i$ , using the symbol $\prod_{i=1}^n z_i = z$ we have that
\[
H^2(\mu) = {\text{span}}\, \{ z_1^{i_1}z_2^{i_2}\dots z_{n}^{i_{n}}\,:\,0 \leq i_1,\dots,i_{n} \leq {\widetilde k}-1 \} +{Ran}\, M_{z}\,,
\]
where $\mu$ is the measure as in Lemma \ref{basiclem2}. Now $M_{z}M_{z}^*$ is a
projection onto $Ran\, M_{z}$ and
\begin{equation}\label{eqn:essn}
Ran\,M_{z} \supseteq \{ z f(z_1,\dots,z_n):\; f \in \C[z_1,\dots, z_n] \}.
\end{equation}
Therefore, $Ran\,(I-M_{z}M_{z}^*)$, which is equal to $\mathcal D_{M_{z}^*}$, has finite dimension, say $d$. Therefore, $\mathcal D_{M_{z}^*} \equiv \C^d$. Consider the tuple of coordinate multipliers $(M_{z_1},\dots, M_{z_n})$ on $H^2(\mu)$. By Lemma \ref{lempure},
$(M_{z_1},\dots, M_{z_n})$ is a pure isometric tuple on $H^2(\mu)$. Note that the product of $M_{z_1}, \dots , M_{z_n}$ is equal to $M_z$ which is a pure isometry. Since $\mathcal D_{M_{z}^*} \equiv \C^d$, it follows from Theorem \ref{BCL} that $(M_{z_1},\dots, ,M_{z_n}, M_z)$ can be identified with
$(T_{\varphi_1},\dots,T_{\varphi_{n}},T_{z})$ on $H^2(\C^d)$, where $\varphi_i(z)=U_iP_i^{\perp}+zU_iP_i$ for each $i=1, \dots , n$, where $P_1, \dots , P_n$ are projections and $U_1, \dots , U_n$ are unitaries in $\mathcal B(\C^d)$. It remains to show that $\prod_{i=1}^n U_i=I_{\C^d}$ and that conditions $(1)-(3)$ of the theorem hold. The facts that each $T_{\varphi_i}$ is an isometry and that $T_{\varphi_i}T_{\varphi_j}=T_{\varphi_j}T_{\varphi_i}$ imply the conditions $(1)$ and $(2)$. Also, the fact that $\prod_{i=1}^n \varphi_i(z)=zI_{\C^d}$ give $\prod_{i=1}^n U_i=I_{\C^d}$ and condition-$(3)$ of the theorem (see the proof of Lemma 2.2 in \cite{Berc:Dou:Foi} for technical details). By Lemma \ref{lemeval}, a point $(t_1,\dots ,t_{n})$ is in $\Omega$ if
 and only if $(\bar t_1, \dots ,\bar t_{n})$ is a joint eigenvalue of $\left( T_{\varphi_1}^*,\dots,T_{\varphi_{n}}^* \right)$ what happens if and only if $(\bar t_1, \dots ,\bar t_{n})$ is a joint eigenvalue of $(\varphi_1(z), \dots , \varphi_n(z))$. Therefore,
\[
\Omega =\{(z_1, \dots , z_n)\in \D^n \,: \, (z_1, \dots , z_n) \in \sigma_T(U_1P_1^{\perp}+zU_1P_1\,, \dots \,,U_nP_n^{\perp}+zU_nP_n) \}.
\]
The proof is now complete.	
	
	\end{proof}
	
It is obvious from the proof of Theorem \ref{thm:poly-DVchar-2} that the generating polynomials for a distinguished set in $\D^n$ is determined by a set of projections and commuting unitaries acting on a finite dimensional Hilbert space. For this reason let us declare such a set to be a set defining a distinguished set in $\D^n$.
\begin{defn} \label{defn:Dist-1}
For any $d\geq 2$, a set $\Sigma=\{ P_1, \dots , P_n , U_1, \dots , U_n \} \subset \mathcal M_d(\C)$ consisting of orthogonal projections $P_1, \dots , P_n$ and commuting unitaries $U_1, \dots , U_n$ is said to define a distinguished set in the polydisc $\D^n$ if
	\begin{equation*}
\Omega_\Sigma =\{(z_1, \dots , z_n)\in \D^n \,: \, (z_1, \dots , z_n) \in \sigma_T(U_1P_1^{\perp}+zU_1P_1\,, \dots \,,U_nP_n^{\perp}+zU_nP_n),\, z=\Pi_{i=1}^n z_i \}
\end{equation*}
	is a distinguished set in $\D^n$.

\end{defn}

In Theorem \ref{BCL}, we have seen the role of a few projections and unitaries in characterizing a tuple of commuting isometries whose product is a pure isometry, i.e. a unilateral shift. This was even more evident in Theorems \ref{puredil}, \ref{Uni-puredil} \& \ref{Unimain} determining dilations of a tuple of commuting contractions. In the next Section, we shall see that these dilation theorems are equivalent to the existence of a distinguished set in $\D^n$ when the concerned projections and unitaries act on a finite dimensional space.

\vspace{0.3cm}

\section{Rational dilation on a distinguished set and von Neumann's inequality} \label{sec:rat-VN}

\vspace{0.3cm}

\noindent Let $\Omega$ be a distinguished set in $\D^n$. Then $\Omega = W \cap \D^n$ for an algebraic set $W$ in the affine space $\mathbb A^n_{\C}$. In this Section, we study $\ov{\Omega}$ as a spectral set. Let us consider a commuting tuple of Hilbert space operators having $\ov{\Omega} = W \cap \ov{\D^n}$ as a spectral set. We explore the possibility of success of rational dilation on $\ov{\Omega}$ followed by discovering some interesting interplay between rational dilation on the mother domain $\D^n$ and the existence of a distinguished set $\Omega$. First we show that the closure of a distinguished set is polynomially convex.

\begin{prop}\label{prop:poly-convex}
The closure of a distinguished set in $\mathbb D^n$ is polynomially convex.
\end{prop}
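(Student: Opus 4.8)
The plan is to show directly that the polynomial hull of $\ov{\Omega}$ coincides with $\ov{\Omega}$ itself. Recall that for a compact set $K \subset \C^n$ its polynomial hull is $\widehat K = \{ z \in \C^n : |p(z)| \leq \sup_{w\in K}|p(w)| \text{ for every } p \in \C[z_1,\dots,z_n] \}$, and $K$ is polynomially convex precisely when $\widehat K = K$. So fix a distinguished set $\Omega = W \cap \D^n$ with $W$ the associated algebraic set, so that by definition $\ov{\Omega} = W \cap \ov{\D^n}$. Since $W$ is Zariski closed, hence closed in the Euclidean topology, and $\ov{\D^n}$ is compact, the set $\ov{\Omega}$ is compact; thus polynomial convexity amounts to the inclusion $\widehat{\ov{\Omega}} \subseteq \ov{\Omega}$, the reverse inclusion being automatic.

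The argument rests on feeding two families of test polynomials into the defining inequality for the hull. First I would invoke the Eisenbud--Evans theorem (already used above) to write $W = Z(f_1,\dots,f_n)$ for finitely many $f_i \in \C[z_1,\dots,z_n]$. Each $f_i$ vanishes identically on $W$, hence on $\ov{\Omega} = W\cap\ov{\D^n}$, so $\sup_{\ov{\Omega}}|f_i| = 0$. Consequently, for any $\zeta \in \widehat{\ov{\Omega}}$ we get $|f_i(\zeta)| \leq \sup_{\ov{\Omega}}|f_i| = 0$ for every $i$, forcing $f_i(\zeta) = 0$ and hence $\zeta \in W$. Second, the coordinate functions $z \mapsto z_i$ satisfy $\sup_{\ov{\Omega}}|z_i| \leq 1$ because $\ov{\Omega}\subseteq\ov{\D^n}$, so the same inequality yields $|\zeta_i| \leq 1$ for all $i$, that is $\zeta \in \ov{\D^n}$. (This is just the standard observation that the closed polydisc is itself polynomially convex, witnessed by the coordinate polynomials.) Combining the two conclusions, $\zeta \in W \cap \ov{\D^n} = \ov{\Omega}$, which gives $\widehat{\ov{\Omega}} \subseteq \ov{\Omega}$ and completes the proof.

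There is essentially no hard step here: the entire content is that the polynomials cutting out $W$ already witness membership in $W$ through the hull inequality, while the coordinate functions confine the hull to the closed polydisc. The only points that merit a word of care are the availability of a finite generating set for $W$ (supplied by Eisenbud--Evans) and the fact that algebraic sets are closed in the Euclidean topology, which guarantees that $\ov{\Omega}$ is compact so that the notion of polynomial convexity is meaningful to begin with.
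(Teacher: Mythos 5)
Your proof is correct and is essentially the paper's argument in contrapositive form: the paper separates points of $\C^n\setminus\ov{\Omega}$ by splitting into the cases $y\notin\ov{\D^n}$ (handled by polynomial convexity of the closed polydisc) and $y\in\ov{\D^n}\setminus W$ (handled by a generator $f_t$ with $f_t(y)\neq 0$ and $\|f_t\|_{\infty,\ov{\Omega}}=0$), which are exactly your two families of test polynomials applied to a point of the hull. No substantive difference.
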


\begin{proof}

Suppose $\Omega$ is a distinguished set in $\D^n$ and suppose $\Omega = W \cap \D^n$ for some algebraic set $V$ in $\mathbb A^n_{\C}$. Without loss of generality let us assume that $\{ f_1,\dots ,f_n \}$ is a set of generators for $W$. Let $y=(y_1,\dots ,y_n) \in \C^n \setminus \ov{\D^n}$. Since $\ov{\D^n}$ is polynomially convex, there is a polynomial say $f \in \C[z_1,\dots ,z_n]$ such that
\[
|f(y)|>\, \sup_{z\in \ov{\D^n}}|f(z)|=\| f \|_{\infty, \ov{\D^n}} \geq \| f \|_{\infty, \ov{\Omega}}.
\] 
Now let $z=(z_1,\dots ,z_n)\in \ov{\D^n} \setminus \ov{\Omega}$. Then, there is at least one of $f_1, \dots ,f_{n}$ say $f_t$ such that $f_t(z)\neq 0$. It follows that
\[
|f_t(z)|> \, \sup_{x\in \ov{\Omega}}|f_t(x)|=\|f_t\|_{\infty, \ov{\Omega}}=0.
\]
Hence $\ov{\Omega}$ is polynomially convex.

\end{proof}

Interestingly, the definition of spectral set (or complete spectral set) reduces to the success of von Neumann's inequality whence the underlying compact set $X \subset \C^n$ is polynomially convex. We state this result below whose proof is a routine exercise and could be found in the literature, e.g. \cite{S:Pal2}.
 
\begin{lem} [\cite{S:Pal2}, Lemma 3.12] \label{poly-convex}

If $X\subseteq \mathbb C^n$ is a polynomially convex set, then $X$
is a spectral set or a complete spectral set respectively for a commuting tuple $(T_1,\dots,T_n)$ if and
only if von Neumann's inequality holds, i.e.

\begin{equation}\label{pT}
\|f(T_1,\dots,T_n)\|\leq \| f \|_{\infty,\, X}\,
\end{equation}
for all polynomials $f$ in $\C[z_1, \dots , z_n]$ or for all matricial polynomials $f=[f_{ij}]_{d \times d}$ respectively, where $d \in \mathbb N$ and each $f_{ij}\in \C[z_1, \dots , z_n]$.

\end{lem}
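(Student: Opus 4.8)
The plan is to deduce the rational (resp.\ matricial) functional-calculus inequality from the polynomial one by a Runge-type approximation, the key tool being the Oka--Weil theorem: on a polynomially convex compact subset of $\C^n$ every function holomorphic in a neighbourhood can be approximated uniformly by holomorphic polynomials. The forward implications are immediate, since a (matricial) polynomial is in particular a (matricial) rational function with no poles on $X$, so being a spectral set (resp.\ complete spectral set) for $\underline T$ trivially forces von Neumann's inequality $(\ref{pT})$.

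For the reverse implication, suppose first that $(\ref{pT})$ holds for all scalar polynomials. I would begin by showing $\sigma_T(\underline T)\subseteq X$. If $\lambda\notin X$, then polynomial convexity of $X$ supplies a polynomial $f$ with $|f(\lambda)|>\|f\|_{\infty,X}$; on the other hand the spectral mapping theorem for the Taylor joint spectrum gives $f(\sigma_T(\underline T))=\sigma(f(\underline T))$, so were $\lambda$ in $\sigma_T(\underline T)$ we would have $|f(\lambda)|\le\|f(\underline T)\|\le\|f\|_{\infty,X}$ by $(\ref{pT})$, a contradiction. Hence $\sigma_T(\underline T)\subseteq X$, and for any $f=p/q\in\mathcal R(X)$ the same spectral mapping theorem shows $0\notin\sigma(q(\underline T))$, so $q(\underline T)$ is invertible and $f(\underline T)=p(\underline T)q(\underline T)^{-1}$ is well defined.

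Now I would fix $f=p/q\in\mathcal R(X)$. Since $q$ has no zeros on the compact set $X$, the function $f$ is holomorphic on a neighbourhood of $X$, so by Oka--Weil there are polynomials $p_k\to f$ uniformly on $X$. The crucial point is to identify the operator limit. By $(\ref{pT})$, $\|(p_k-p_l)(\underline T)\|\le\|p_k-p_l\|_{\infty,X}\to 0$, so $\{p_k(\underline T)\}$ is Cauchy and converges in norm to some $S$. Multiplying by $q$, the polynomials $q\,p_k\to p$ uniformly on $X$, whence $q(\underline T)p_k(\underline T)\to p(\underline T)$ by $(\ref{pT})$ again; letting $k\to\infty$ gives $q(\underline T)S=p(\underline T)$, i.e.\ $S=f(\underline T)$. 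Passing to the limit in $\|p_k(\underline T)\|\le\|p_k\|_{\infty,X}$ then yields $\|f(\underline T)\|\le\|f\|_{\infty,X}$, so $X$ is a spectral set for $\underline T$.

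The complete-spectral-set statement follows along the same lines, applying the Oka--Weil approximation entrywise to each $F=[f_{ij}]\in\mathcal R_k(X)$ (entrywise uniform convergence controlling the matrix norm) and running the identical Cauchy argument with matricial polynomials and the matricial form of $(\ref{pT})$. I expect the only genuinely delicate step to be the identification of the operator limit $S$ with $f(\underline T)$: uniform approximation on $X$ controls the norms but not \emph{a priori} the limit, and it is precisely the invertibility of $q(\underline T)$ (coming from $\sigma_T(\underline T)\subseteq X$) together with the device of approximating $p$ by $q\,p_k$ that pins $S$ down. The remaining ingredients---Oka--Weil and the spectral mapping theorem for the Taylor joint spectrum---are standard.
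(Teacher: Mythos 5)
Your argument is correct: the paper itself omits the proof of this lemma (citing [S:Pal2] and calling it a routine exercise), and what you have written is exactly the standard argument that reference relies on — use polynomial convexity plus the Taylor spectral mapping theorem to get $\sigma_T(\underline T)\subseteq X$ and the invertibility of $q(\underline T)$, then Oka--Weil approximation and the Cauchy-sequence device to identify the limit with $f(\underline T)=p(\underline T)q(\underline T)^{-1}$, with the matricial case handled entrywise. No gaps; the one point worth stating explicitly is that entrywise uniform convergence controls the matrix sup-norm only up to a dimensional constant, which is harmless for the limiting argument.
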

	
We learn from Theorem \ref{Arveson} that the success of rational dilation on a compact set $X \subset \C^n$, i.e. the existence of normal $\partial X-$dilation is equivalent to $X$ being a compact spectral set. Evidently, if $X$ is a complete spectral set for commuting operator tuple, then it is spectral set for it. As a consequence, we have von Neumann's inequality on $X$. Thus, rational dilation on $X$ is stronger than the success of von Neumann's inequality on $X$. Since we have evidences of failure of both von Neumann's inequality and rational dilation in more than two variables (e.g. see \cite{paulsen}), we can expect success of von Neumann's inequality and rational dilation in higher dimensions for certain classes of operators only. Our first main result of this Section determines such a class of commuting operator tuples in the polydisc that admits rational dilation on a distinguished set in $\D^n$.

	\begin{thm}\label{thm:VN}
Let $\Sigma=(T_1,\dots,T_n)$ be a tuple of commuting contraction with the product $T= \prod_{i=1}^n T_i $ such that $T^*$ is a $C._0$ contraction and $\dim \mathcal D_T < \infty$. If there are orthogonal projections $P_1, \dots , P_n$ and commuting unitaries $U_1, \dots , U_n$ in $\mathcal B(\mathcal D_T)$ such that the set $\Sigma = \{ P_1, \dots , P_n, U_1, \dots , U_n \}$ defines a distinguished set in $\D^n$ and satisfies
\[
D_{T^*}T_i^*=P_i^{\perp}U_i^*D_{T^*}+P_iU_i^*D_{T^*}T^* \quad \text{ for} \quad 1 \leq i \leq n,
\]
then both $(T_1, \dots , T_n)$ and $(T_1^*, \dots , T_n^*)$ possess normal $\partial \ov{\Omega}_{\Sigma}-$dilation, where $\partial \ov{\Omega}_{\Sigma} = \ov{\Omega}_{\Sigma} \setminus {\Omega}_{\Sigma}= \T^n \cap \ov{\Omega}_{\Sigma}$. Moreover, the dilation of $(T_1^*, \dots , T_n^*)$ acts on the minimal dilation space of $T^*$ and is minimal.

\end{thm}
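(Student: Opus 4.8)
The plan is to recognise that the stated hypotheses are nothing but the hypotheses of the unitary-dilation theorems \ref{Uni-puredil} and \ref{Unimain} applied to the \emph{adjoint} tuple, and then to read off the dilation space and its Taylor joint spectrum from the Berger--Coburn--Lebow model. First I would pass to adjoints: since $T_1,\dots,T_n$ commute, so do $T_1^*,\dots,T_n^*$, and their product is $\prod_{i=1}^n T_i^*=T^*$, which by hypothesis is a $C._0$ contraction with $\dim\mathcal D_{(T^*)^*}=\dim\mathcal D_T<\infty$. The three relations $(1)$--$(3)$ imposed on $\Sigma$, together with $\prod_{i=1}^n U_i=I$, are exactly the commutativity and summation conditions of those dilation theorems, while the displayed intertwining relation is the remaining (first) condition written for $(T_1^*,\dots,T_n^*)$ with the projections and unitaries acting on the finite-dimensional space $\mathcal D_T\cong\C^d$. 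Hence the dilation theorem applies to $(T_1^*,\dots,T_n^*)$ and yields a commuting unitary tuple $(W_1,\dots,W_n)$ with $W=\prod_{i=1}^n W_i$ the minimal unitary dilation of $T^*$, which by Theorem \ref{Uni-puredil} is unitarily equivalent to $(M_{\varphi_1},\dots,M_{\varphi_n})$ on $L^2(\mathcal D_T)$, where $\varphi_i(z)=U_iP_i^{\perp}+zU_iP_i$. By Theorem \ref{Unimain}(a) this dilation is minimal and $L^2(\mathcal D_T)$ is the minimal unitary-dilation space of $T^*$, which already settles the ``moreover'' clause.

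Next I would compute the Taylor joint spectrum of $(W_1,\dots,W_n)$. Each $M_{\varphi_i}$ is a normal multiplication operator, the tuple is commuting, and for a commuting normal multiplication tuple the Taylor joint spectrum agrees with the geometric joint spectrum and equals $\bigcup_{z\in\T}\sigma_T(\varphi_1(z),\dots,\varphi_n(z))$ (the union is already compact since the $\varphi_i$ are polynomial in $z$). For $|z|=1$ the matrix $P_i^{\perp}+zP_i$ is unitary, so each $\varphi_i(z)$ is a unitary matrix and the commuting family $\{\varphi_i(z)\}_i$ has only unimodular joint eigenvalues; moreover condition $(3)$, equivalently $\prod_i\varphi_i(z)=zI$ as in the proof of Theorem \ref{thm:poly-DVchar-2}, forces the product of the joint eigenvalues to equal $z$. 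Comparing with the defining relation of $\Omega_\Sigma$ in \eqref{eq:PDv-1}, this union is precisely $\ov{\Omega}_\Sigma\cap\T^n=\partial\ov{\Omega}_\Sigma$. Thus $\sigma_T(W_1,\dots,W_n)\subseteq\partial\ov{\Omega}_\Sigma$, and since $\ov{\Omega}_\Sigma$ is polynomially convex by Proposition \ref{prop:poly-convex}, the polynomial intertwining $f(T_i^*)=P_{\HS}f(W_i)|_{\HS}$ extends to every $f\in\mathcal R(\ov{\Omega}_\Sigma)$. Hence $(W_1,\dots,W_n)$ is a minimal normal $\partial\ov{\Omega}_\Sigma$-dilation of $(T_1^*,\dots,T_n^*)$.

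Finally I would obtain the dilation of $(T_1,\dots,T_n)$ by taking adjoints. The tuple $(W_1^*,\dots,W_n^*)$ is again commuting and normal, and the identity $P_{\HS}(W^*)^\alpha|_{\HS}=(P_{\HS}W^\alpha|_{\HS})^*=((T^*)^\alpha)^*=T^\alpha$ for every multi-index $\alpha$ shows that it is a normal dilation of $(T_1,\dots,T_n)$; its joint spectrum is the complex conjugate of $\sigma_T(W_1,\dots,W_n)$ and again lies on $\T^n=b\ov{\D^n}$. After identifying this spectrum with $\partial\ov{\Omega}_\Sigma$, an appeal to Arveson's theorem (Theorem \ref{Arveson}) completes the argument.

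I expect the genuine work, and the principal obstacle, to be exactly this spectral identification: passing from the abstract non-singularity definition of the Taylor joint spectrum to the concrete ``union over the circle'' of joint eigenvalues of the unitary pencils $\varphi_i(z)$, and then matching that set on the nose with $\partial\ov{\Omega}_\Sigma=\ov{\Omega}_\Sigma\cap\T^n$. The delicate point is keeping track of conjugation when transferring from $(T_1^*,\dots,T_n^*)$ to $(T_1,\dots,T_n)$, since the adjoint tuple carries the conjugate spectrum; one must verify that the torus part of $\ov{\Omega}_\Sigma$ is stable under this passage (equivalently, that it is the joint spectrum of the adjoint pencils). Everything else reduces to a bookkeeping translation of the hypotheses into Theorems \ref{Uni-puredil} and \ref{Unimain} and an invocation of polynomial convexity.
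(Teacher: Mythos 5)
Your first half --- reading the hypotheses as those of Theorem \ref{Uni-puredil} for the adjoint tuple, obtaining the minimal commuting unitary dilation $(M_{\varphi_1},\dots,M_{\varphi_n})$ on the $L^2$ space of the (finite-dimensional) defect space, and locating its Taylor joint spectrum inside $\T^n\cap\ov{\Omega}_\Sigma$ --- is exactly the paper's argument, and your polynomial-convexity remark upgrading the polynomial intertwining to all of $\mathcal R(\ov{\Omega}_\Sigma)$ is a legitimate step that the paper leaves implicit. The problem is the second half. Taking adjoints does give that $(W_1^*,\dots,W_n^*)$ is a commuting normal tuple with $T^\alpha=P_{\HS}(W^*)^\alpha|_{\HS}$, but its joint spectrum is the \emph{coordinatewise conjugate} of $\sigma_T(W_1,\dots,W_n)$, hence lands in $\{(\bar z_1,\dots,\bar z_n):(z_1,\dots,z_n)\in\partial\ov{\Omega}_\Sigma\}$. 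That set is $\partial\ov{\Omega_\Sigma^*}$, the boundary of the distinguished set cut out by the adjoint pencils $P_i^{\perp}U_i^*+zP_iU_i^*$ (the set the paper denotes $\Omega_\Sigma^*$), and there is no reason for it to equal $\partial\ov{\Omega}_\Sigma$: a distinguished set need not be stable under conjugation. You flag this as ``the delicate point'' but leave it unresolved, and as written your argument only yields a normal $\partial\ov{\Omega_\Sigma^*}$-dilation of $(T_1,\dots,T_n)$, which is a different conclusion from the one claimed.

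The paper closes precisely this gap by a different route. It does not take adjoints of the unitary dilation; instead it invokes Theorem \ref{puredil} to realize $(T_{\Phi_1}^*,\dots,T_{\Phi_n}^*)$ on $H^2$ as a co-isometric extension of $(T_1,\dots,T_n)$ and estimates, for a matricial polynomial $f$,
\[
\|f(T_1,\dots,T_n)\|\;\le\;\|f_*(M_{\Phi_1},\dots,M_{\Phi_n})\|\;=\;\max_{\theta}\bigl\|f_*\bigl(\Phi_1(e^{i\theta}),\dots,\Phi_n(e^{i\theta})\bigr)\bigr\|,
\]
where $f_*(B_1,\dots,B_n)=f(B_1^*,\dots,B_n^*)^*$. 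Evaluating on the joint eigenvalues of the commuting unitary matrices $\Phi_i(e^{i\theta})$ and using $\|f_*(\lambda_1,\dots,\lambda_n)\|=\|f(\bar\lambda_1,\dots,\bar\lambda_n)\|$ makes the two conjugations cancel, so the resulting supremum is of $f$ itself over $\ov{\Omega}_\Sigma\cap\T^n$. Together with Proposition \ref{prop:poly-convex} and Lemma \ref{poly-convex} this shows $\ov{\Omega}_\Sigma$ is a \emph{complete spectral set} for $(T_1,\dots,T_n)$, and Arveson's Theorem \ref{Arveson} then manufactures the normal $\partial\ov{\Omega}_\Sigma$-dilation. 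To repair your shortcut you would have to either prove that $\ov{\Omega}_\Sigma\cap\T^n$ is invariant under coordinatewise conjugation (not true in general) or run the $f\mapsto f_*$ computation, which is in effect the paper's mechanism.
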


\begin{proof}

We have that $\mathcal D_T$ has finite
dimension, say $d$. Then $\mathcal D_T \equiv \C^d$ and the projections $P_1,\dots,P_{n}$ and the commuting unitaries $ U_1, \dots , U_n$ are all complex square matrices of order $d$. Since the set $\Sigma=\{P_1, \dots , P_{n}, U_1, \dots , U_n \}$ defines a distinguished set in $\D^n$, it follows that the set
\begin{equation*}
\Omega_\Sigma =\{(z_1, \dots , z_n)\in \D^n \,: \, (z_1, \dots , z_n) \in \sigma_T(U_1P_1^{\perp}+zU_1P_1\,, \dots \,,U_nP_n^{\perp}+zU_nP_n),\, z=\Pi_{i=1}^n z_i \}
\end{equation*}
is a distinguished set in $\D^n$. Evidently $\Omega_\Sigma= V_S \cap \D^n$, where $V_S$ is the algebraic set generated by the set of polynomials $S=\{ g_i= \det (U_iP_i^{\perp}+zU_iP_i -z_iI)\,:\, 1\leq i \leq n \}$. Needless to mention that $V_S \cap \partial \mathbb D^n=V_S \cap \mathbb T^n= \partial \Omega_\Sigma=  \ov{\Omega}_\Sigma \setminus \Omega_\Sigma$ and hence
\begin{equation} \label{eqn:m-01}
\sigma_T(U_1P_1^{\perp}+zU_1P_1\,, \dots \,,U_nP_n^{\perp}+zU_nP_n) \subseteq \partial \Omega_\Sigma \subseteq \T^n,\;\;  \text{ when } \;\; z=\prod_{i=1}^n z_i \in \T.
\end{equation}
Again, since $\Sigma$ defines the distinguished variety $\Omega_\Sigma$ in $\D^n$, it follows that $\prod_{i=1}^n U_i = I_{\mathcal D_T}$ and the operators in $\Sigma$ satisfy the conditions of Theorem \ref{thm:poly-DVchar-2}. Thus, along with the given condition that $D_{T^*}T_i^*=P_i^{\perp}U_i^*D_{T^*}+P_iU_i^*D_{T^*}T^* \quad \text{ for} \quad 1 \leq i \leq n,
$, it follows from Theorem \ref{Uni-puredil} that $(T_1^*, \dots , T_n^*)$ dilates to the commuting unitary tuple $(M_{U_1P_1^{\perp}+zU_1P_1}, \dots , M_{U_nP_n^{\perp}+zU_nP_n})$ consisting of multiplication operators acting on $L^2(\mathcal D_{T^*})$ and the dilation is minimal. Combining this with Equation-(\ref{eqn:m-01}) we have that $(M_{U_1P_1^{\perp}+zU_1P_1}, \dots , M_{U_nP_n^{\perp}+zU_nP_n})$ on $L^2(\mathcal D_{T^*})$ is a minimal normal $\partial \Omega_\Sigma-$dilation for $(T_1^*, \dots , T_n^*)$.\\

It remains to prove that $(T_1, \dots , T_n)$ also possesses a normal $\partial \Omega_\Sigma -$dilation. In view of Theorem \ref{Arveson}, it suffices if we prove that $\ov{\Omega}_{\Sigma}$ is a complete spectral set for $(T_1, \dots ,T_{n})$. Note that in the previous part of this proof we have seen that the members of $\Sigma$ satisfy the hypothesis of Theorem \ref{thm:poly-DVchar-2} and also we have $D_{T^*}T_i^*=P_i^{\perp}U_i^*D_{T^*}+P_iU_i^*D_{T^*}T^* \; \text{ for} \; 1 \leq i \leq n$. Thus, it follows from Theorem \ref{puredil} that the commuting Toeplitz operator tuple $(T_{\Phi_1}, \dots , T_{\Phi_n})$ acting on $H^2(\mathcal D_{T^*})$, where $\Phi_i(z)=U_iP_i^{\perp}+zU_iP_i$ for $1\leq i \leq n$, is an isometric dilation of $(T_1^*, \dots , T_n^*)$. Moreover, $(T_{\Phi_1}^*, \dots , T_{\Phi_n}^*)$ is a co-isometric extension of $(T_1, \dots , T_n)$. Let $f$ be a matrix-valued holomorphic polynomial in
$n$-variables, where the coefficient matrices are of order $k$ say, and
let $f_*$ be the polynomial satisfying
$f_*(B_1,\dots,B_n)=f(B_1^*,\dots,B_{n}^*)^*$ for any $n$
commuting operators $B_1,\dots,B_n$. Then
\begingroup
\allowdisplaybreaks
\begin{align*}
\|f(T_1,\dots,T_{n})\| &\leq \;\; \|f(T_{\Phi_1}^*,\dots,T_{\Phi_{n}}^*)\|_{H^2( \mathcal D_{T})\otimes \mathbb C^k} \\
&= \;\;  \|f_*(T_{\Phi_1},\dots,T_{\Phi_{n}})^*\|_{H^2( \mathcal D_{T})\otimes \mathbb C^k} \\
& = \;\; \|f_*(T_{\Phi_1},\dots,T_{\Phi_{n}})\|_{H^2(\mathcal D_{T})\otimes \mathbb C^k} \\&
\leq \;\; \|f_*(M_{\Phi_1},\dots, M_{\Phi_{n}})\|_{L^2( \mathcal D_{T})\otimes \mathbb C^k} \\
& = \;\; \max_{\theta \in [0,2\pi]} \left\|f_*
\left( \Phi_1(e^{i\theta}),\dots,\Phi_{n}(e^{i\theta}) \right) \right\|.
\end{align*}
\endgroup
Since ${\Phi_1(z)},\dots, {\Phi_{n}(z)} $ are commuting normal operators for every $z\in\mathbb T$, we have that
\[
\left\| f_*\left( \Phi_1(e^{i\theta}),\dots,\Phi_{n}(e^{i\theta}) \right) \right\| =  \max_{\theta}  \{
|f_*(\lambda_1,\dots,\lambda_{n})|: (\lambda_1,\dots,\lambda_{n}) \in
\sigma_T(\Phi(e^{i\theta}),\dots,\Phi_{n}(e^{i\theta})) \}.
\]
Let us define
\[
\Omega_{\Sigma}^*= \{ (z_1,\dots,z_{n}) \in \mathbb D^n \,:
(z_1,\dots, z_{n}) \in \sigma_T(P_1^{\perp}U_1^*+zP_1U_1^*, \dots, P_n^{\perp}U_n^*+zP_nU_n^*)
\}.
\]
Then following the proof of Theorem \ref{thm:poly-DVchar-2} one can easily find that $\Omega_\Sigma^*$ is also a distinguished set in $\D^n$. Since $(M_{\Phi_1} \dots, M_{\Phi_{n}})$ is a commuting tuple of unitaries, each joint-eigenvalue $(\lambda_1,\dots,\lambda_{n})$ in $\sigma_T(\Phi_1(e^{i\theta}),\dots,\Phi_{n}(e^{i\theta}))$ belongs to $\T^n$. Therefore, we have
\begingroup
\allowdisplaybreaks
\begin{align*}
\| f(T_1,\dots, T_{n}) \| &\leq \quad \max_{(z_1,\dots,
z_{n})\in \overline{\Omega_{\Sigma}^*}
\cap \T^n} \;\; \| f_*(z_1,\dots,z_{n}) \| \\
& = \quad \max_{(z_1,\dots,z_{n})\in \overline{\Omega_{\Sigma}^*}
\cap \T^n} \;\; \|
\overline{f(\bar{z}_1,\dots,\bar{z}_n)} \| \\
& = \quad \max_{(z_1,\dots,z_{n})\in \overline{\Omega_{\Sigma}^*} \cap \T^n} \;\;  \| f(\bar{z}_1,\dots,\bar{z}_n) \| \\
& = \quad \max_{(z_1,\dots,z_{n})\in \overline{\Omega}_{\Sigma} \cap \D^n} \;\;  \|
f(z_1,\dots,z_{n}) \| \\
&  \leq \quad \max_{(z_1,\dots,z_{n})\in
\overline{\Omega}_{\Sigma}} \;\; \quad \| f(z_1,\dots,z_{n}) \|.
\end{align*}
\endgroup

Hence, by Lemma \ref{poly-convex}, $\ov{\Omega}_{\Sigma}$ is a complete spectral set for $(T_1, \dots ,T_{n})$ and the proof is complete.

\end{proof}

We now ask if the converse of Theorem \ref{thm:VN} is true, that is, if a tuple of commuting contractions $(T_1, \dots , T_n)$ admits a normal $\partial \Omega-$dilation for a distinguished set $\Omega$ in $\D^n$, then does there exist a set of orthogonal projections and commuting unitaries $\Sigma= \{ P_1, \dots , P_n, U_1, \dots , U_n\} \subset \mathcal B(\mathcal D_T)$ such that $\Omega=\Omega_\Sigma$ ? The following results that present an interplay between dilation of a tuple of commuting contractions and a certain distinguished set determined by a set $\Sigma$, show that the answer to the above question is affirmative.

\begin{thm} \label{thm:dilation-variety1}

Let $(T_1, \dots , T_n)$ be commuting Hilbert space contractions such that the product ${ T=\prod_{i=1}^n T_i }$ is a $C._0$ contraction and that $\dim \mathcal D_{T^*} < \infty $. Then the following are equivalent.\\

\begin{enumerate}

\item $(T_1, \dots , T_n)$ possesses a unitary dilation $(W_1, \dots , W_n)$ with ${ W=\prod_{i=1}^n W_i }$ being the minimal unitary dilation of $T$.\\

\item  There are orthogonal projections $P_1,\ldots ,P_n$ and commuting unitaries $U_1,\ldots ,U_n$ in $\mathcal{B}(\mathcal{D}_{T^*})$ with $\prod_{i=1}^n U_i=I_{\mathcal D_{T^*}}$ such that $D_{T^*}T_i^*=P_i^{\perp}U_i^*D_{T^*}+P_iU_i^*D_{T^*}T^*$ for $1 \leq i \leq n$ and the set $\Sigma=\{ P_1, \dots , P_n, U_1, \dots , U_n \}$ defines a distinguished set in $\D^n$.\\

\item $(T_1, \dots, T_n)$ possesses a normal $\partial {\Omega}_\Sigma -$dilation for a distinguished set $\Omega_\Sigma$ given by
\begin{equation*}
\quad \quad \Omega_\Sigma =\{(z_1, \dots , z_n)\in \D^n \,: \, (z_1, \dots , z_n) \in \sigma_T(U_1P_1^{\perp}+zU_1P_1\,, \dots \,,U_nP_n^{\perp}+zU_nP_n),\, z=\Pi_{i=1}^n z_i \},
\end{equation*}
where $P_1, \dots , P_n$ are orthogonal projections and $U_1, \dots , U_n$ are commuting unitaries in $\mathcal B(D_{T^*})$ such that $\prod_{i=1}^n U_i=I_{\mathcal D_{T^*}}$ and that $D_{T^*}T_i^*=P_i^{\perp}U_i^*D_{T^*}+P_iU_i^*D_{T^*}T^*$ for $1 \leq i \leq n$.
\end{enumerate}

\end{thm}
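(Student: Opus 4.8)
The strategy is to establish $(1)\Leftrightarrow(2)$ purely from the dilation theory of Theorem \ref{Uni-puredil}, and then to close the loop via $(1)\Rightarrow(3)\Rightarrow(2)$. The crucial bookkeeping observation is that conditions $(2)$, $(3)$, $(4)$ of Theorem \ref{Uni-puredil} are, verbatim, conditions $(1)$, $(2)$, $(3)$ of Theorem \ref{thm:poly-DVchar-2}, i.e. they are exactly the relations under which a set $\Sigma=\{P_1,\dots,P_n,U_1,\dots,U_n\}$ with $\prod_{i=1}^nU_i=I$ defines a distinguished set $\Omega_\Sigma$ in $\D^n$. Throughout I would write $\varphi_i(z)=U_iP_i^{\perp}+zU_iP_i$ for the $i$-th linear matrix pencil.

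For $(1)\Rightarrow(2)$: since $T=\prod_{i=1}^nT_i$ is $C._0$ and $\dim\mathcal D_{T^*}<\infty$, Theorem \ref{Uni-puredil} applies, and a unitary dilation with $W=\prod_{i=1}^nW_i$ minimal yields unique projections $P_1,\dots,P_n$ and commuting unitaries $U_1,\dots,U_n$ in $\mathcal B(\mathcal D_{T^*})$ with $\prod_{i=1}^nU_i=I$ meeting conditions $(1)$--$(4)$ there. Condition $(1)$ is precisely the defect identity $D_{T^*}T_i^*=P_i^{\perp}U_i^*D_{T^*}+P_iU_i^*D_{T^*}T^*$ demanded in $(2)$, while conditions $(2)$--$(4)$ are the hypotheses of Theorem \ref{thm:poly-DVchar-2} and hence force $\Sigma$ to define a distinguished set; this is exactly $(2)$. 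Conversely, for $(2)\Rightarrow(1)$, the clause that $\Sigma$ defines a distinguished set supplies conditions $(2)$--$(4)$ of Theorem \ref{Uni-puredil} (these being the defining relations of Theorem \ref{thm:poly-DVchar-2}), while the given defect identity supplies condition $(1)$; with all four conditions and $\prod_{i=1}^nU_i=I$ in hand, the converse half of Theorem \ref{Uni-puredil} produces the unitary dilation with $W$ minimal.

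For $(1)\Rightarrow(3)$: Theorem \ref{Uni-puredil} identifies the unitary dilation $(W_1,\dots,W_n)$ with the commuting multiplier tuple $(M_{\varphi_1},\dots,M_{\varphi_n})$ on $L^2(\mathcal D_{T^*})$, which is a commuting normal tuple. By the previous paragraph $\Sigma$ defines $\Omega_\Sigma$, so it remains to pin the joint spectrum to $\partial\Omega_\Sigma$. For each fixed $z\in\T$ the matrices $\varphi_1(z),\dots,\varphi_n(z)$ are commuting unitaries, so every joint eigenvalue $(z_1,\dots,z_n)$ of $(\varphi_1(z),\dots,\varphi_n(z))$ has all coordinates unimodular; as such a point also lies on the variety $W$ generating $\Omega_\Sigma$, it belongs to $W\cap\T^n=\partial\Omega_\Sigma$. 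Taking the union over $z\in\T$ gives $\sigma_T(M_{\varphi_1},\dots,M_{\varphi_n})\subseteq\partial\Omega_\Sigma$, so $(W_1,\dots,W_n)$ is a normal $\partial\Omega_\Sigma$-dilation of $(T_1,\dots,T_n)$, which is $(3)$. Finally, $(3)\Rightarrow(2)$ is immediate: statement $(3)$ already carries the projections, the commuting unitaries, the relation $\prod_{i=1}^nU_i=I$, the defect identity, and the fact that $\Omega_\Sigma$ is a distinguished set, so deleting the dilation clause leaves precisely $(2)$.

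The step that needs genuine care is the spectral localization in $(1)\Rightarrow(3)$: one must verify that the Taylor joint spectrum of the multiplier tuple is confined to the distinguished boundary $\partial\Omega_\Sigma$ and not merely to $\T^n$, and then that the polynomial dilation identity furnished by Theorem \ref{Uni-puredil} upgrades to every $f\in\mathcal R(\ov{\Omega}_\Sigma)$. The former is the unitarity argument above; the latter holds because $\ov{\Omega}_\Sigma$ is polynomially convex by Proposition \ref{prop:poly-convex}, so a rational function regular on $\ov{\Omega}_\Sigma$ is continuous on the spectrum $\partial\Omega_\Sigma$ of the normal dilation, whence the identity extends by polynomial approximation. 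Once the coincidence of the two sets of defining relations is recorded, the remaining implications are formal.
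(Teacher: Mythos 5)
Your proposal is correct, and its core — matching conditions $(2)$--$(4)$ of Theorem \ref{Uni-puredil} with the defining relations $(1)$--$(3)$ of Theorem \ref{thm:poly-DVchar-2} to get $(1)\Leftrightarrow(2)$ — is exactly what the paper does. Where you diverge is in how statement $(3)$ is wired into the cycle. The paper proves $(2)\Rightarrow(3)$ by citing Theorem \ref{thm:VN}, whose proof runs through the von Neumann inequality for matricial polynomials, Lemma \ref{poly-convex}, and Arveson's Theorem \ref{Arveson} to conclude that $\ov{\Omega}_\Sigma$ is a complete spectral set and hence that a normal $\partial\Omega_\Sigma$-dilation exists; it then closes the loop with $(3)\Rightarrow(1)$ by feeding the data packaged in $(3)$ back into Theorem \ref{Uni-puredil}. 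You instead prove $(1)\Rightarrow(3)$ directly, exhibiting the $L^2(\mathcal D_{T^*})$ multiplier tuple furnished by the ``Moreover'' clause of Theorem \ref{Uni-puredil} as the normal dilation, localizing its joint spectrum to $\partial\Omega_\Sigma=W\cap\T^n$ via the unitarity of the pencils $\varphi_i(z)$ for $z\in\T$ (together with $\prod_i\varphi_i(z)=zI$, which you should state explicitly since it is what places the joint eigenvalue tuple on the variety $V_S$), and then upgrading from polynomials to $\mathcal R(\ov{\Omega}_\Sigma)$ by polynomial convexity and Oka--Weil; $(3)\Rightarrow(2)$ is then just forgetting the dilation clause. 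Your route is more self-contained and avoids the detour through complete spectral sets and Arveson's theorem, at the cost of having to justify the spectral localization and the rational-functional-calculus extension yourself (for the latter one also needs $\sigma_T(T_1,\dots,T_n)\subseteq\ov{\Omega}_\Sigma$, which follows since $g_i(W_1,\dots,W_n)=0$ forces $g_i(T_1,\dots,T_n)=0$; this is worth recording). Both arguments share the same unexamined step, namely that ``$\Sigma$ defines a distinguished set'' returns conditions $(1)$--$(3)$ of Theorem \ref{thm:poly-DVchar-2} for the \emph{given} $P_i,U_i$ rather than merely for some such family, so you are not worse off than the paper on that point.
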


\begin{proof}

\textbf{(1) $\Rightarrow$ (2)} Since $(T_1, \dots , T_n)$ dilates to a commuting tuple of unitaries $(W_1, \dots , W_n)$ with $W=\prod_{i=1}^n W_i$ being the minimal unitary dilation of $T= \prod_{i=1}^n T_i$, it follows from Theorem \ref{Uni-puredil} that there are projections $P_1, \dots , P_n$ and commuting unitaries $U_1, \dots , U_n$ in $\mathcal B(\mathcal D_{T^*})$ satisfying $\prod_{i=1}^n U_i=I_{\mathcal D_{T^*}}$ and the four conditions of Theorem \ref{Uni-puredil}. Evidently, condition $(1)$ provides $D_{T^*}T_i^*=P_i^{\perp}U_i^*D_{T^*}+P_iU_i^*D_{T^*}T^*$ for $1 \leq i \leq n$. Also, conditions $(2)-(4)$, by an application of Theorem \ref{thm:poly-DVchar-2}, imply that the set $\Sigma = \{P_1, \dots , P_n, U_1, \dots, U_n \}$ defines a distinguished set in $\mathbb D^n$.\\

\noindent \textbf{(2)$ \Rightarrow$ (3)} Evidently, the set $\Sigma = \{P_1, \dots , P_n, U_1, \dots , U_n \}$ defines the distinguished set $\Omega_\Sigma$ in $\D^n$ by Theorem \ref{thm:poly-DVchar-2}. The fact that $(T_1, \dots , T_n)$ possesses a normal $\partial {\Omega}_\Sigma-$dilation follows from Theorem \ref{thm:VN}. \\

\noindent \textbf{(3) $\Rightarrow$ (1)} Note that if $\Omega_\Sigma$ is a distinguished set in $\mathbb D^n$, it follows from Theorem \ref{thm:poly-DVchar-2} that the projections $P_1, \dots , P_n$ and commuting unitaries $U_1, \dots , U_n$ from $\mathcal B(\mathcal D_{T^*})$ satisfy conditions $(2)-(4)$ of Theorem \ref{Uni-puredil}. Thus, along with the condition $D_{T^*}T_i^*=P_i^{\perp}U_i^*D_{T^*}+P_iU_i^*D_{T^*}T^*$ for $1 \leq i \leq n$, we have $(3) \Rightarrow (1)$ by Theorem \ref{Uni-puredil}.

\end{proof}

The following result is a next step to Theorem \ref{thm:dilation-variety1} in the sense that here we remove the condition that $T$ is a $C._0$ contraction and present a more generalized interplay between dilation and distinguished sets.

\begin{thm} \label{thm:dilation-variety2}

Let $(T_1, \dots , T_n)$ be commuting Hilbert space contractions such that the product such that $\dim \mathcal D_{T} < \infty $. Then the following are equivalent.\\

\begin{enumerate}

\item $(T_1, \dots , T_n)$ possesses a unitary dilation $(W_1, \dots , W_n)$ with ${ W=\prod_{i=1}^n W_i }$ being the minimal unitary dilation of $T$.\\

\item  There are orthogonal projections $P_1,\ldots ,P_n$ and commuting unitaries $U_1,\ldots ,U_n$ in $\mathcal{B}(\mathcal{D}_{T})$ with $\prod_{i=1}^n U_i=I_{\mathcal D_{T}}$ such that $D_TT_i=P_i^{\perp}U_i^*D_T+P_iU_i^*D_TT$ and $D_TU_iP_iU_i^*D_T=D_{T_i}^2$ for $1 \leq i \leq n$ and the set $\Sigma=\{ P_1, \dots , P_n, U_1, \dots , U_n \}$ defines a distinguished set in $\D^n$.

\end{enumerate}
\end{thm}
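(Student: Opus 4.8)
The plan is to read off both implications directly from the dilation criterion of Theorem \ref{Unimain} by matching its list of conditions against those of Theorem \ref{thm:poly-DVchar-2}. The key observation is that the five conditions characterizing the minimal unitary dilation in Theorem \ref{Unimain}(a) split into two groups: conditions (1) and (4) there, namely $D_TT_i=P_i^{\perp}U_i^*D_T+P_iU_i^*D_TT$ and $D_TU_iP_iU_i^*D_T=D_{T_i}^2$, are precisely the two defect relations appearing in part (2) of the present statement; while conditions (2), (3) and (5) of Theorem \ref{Unimain} coincide verbatim with conditions (1), (2) and (3) of Theorem \ref{thm:poly-DVchar-2}. Since $\dim \mathcal D_T<\infty$, we may identify $\mathcal D_T$ with $\C^d$ and regard $P_1,\dots,P_n,U_1,\dots,U_n$ as members of $\mathcal M_d(\C)$, which is exactly the setting in which Theorem \ref{thm:poly-DVchar-2} operates. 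The normalization $\prod_{i=1}^n U_i=I_{\mathcal D_T}$ is common to all the source theorems and is recorded explicitly in part (2).

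For the implication $(1)\Rightarrow(2)$, I would invoke Theorem \ref{Unimain}(a): the existence of a unitary dilation $(W_1,\dots,W_n)$ with $W=\prod_{i=1}^n W_i$ minimal yields unique projections $P_1,\dots,P_n$ and commuting unitaries $U_1,\dots,U_n$ in $\mathcal B(\mathcal D_T)$ with $\prod_{i=1}^n U_i=I_{\mathcal D_T}$ satisfying all five conditions of that theorem. Conditions (1) and (4) supply the two defect relations demanded in part (2), and the remaining conditions (2), (3), (5) are exactly the hypotheses of Theorem \ref{thm:poly-DVchar-2}, whence $\Sigma=\{P_1,\dots,P_n,U_1,\dots,U_n\}$ defines a distinguished set in $\D^n$ by that theorem together with Definition \ref{defn:Dist-1}.

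For the converse $(2)\Rightarrow(1)$, I would run the same correspondence backwards. Assuming part (2), the hypothesis that $\Sigma$ defines a distinguished set forces, by Theorem \ref{thm:poly-DVchar-2}, the three algebraic relations (1)--(3) of that theorem, i.e. conditions (2), (3), (5) of Theorem \ref{Unimain}. Together with the two explicitly assumed defect relations $D_TT_i=P_i^{\perp}U_i^*D_T+P_iU_i^*D_TT$ and $D_TU_iP_iU_i^*D_T=D_{T_i}^2$, which are conditions (1) and (4) of Theorem \ref{Unimain}, the full list of five conditions is met. Theorem \ref{Unimain}(a) then returns the desired unitary dilation $(W_1,\dots,W_n)$ with $W=\prod_{i=1}^n W_i$ the minimal unitary dilation of $T$.

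I do not expect a genuine obstacle, since the whole argument is a transcription: the only points to verify carefully are that every condition in the two source theorems is stated over the same space $\mathcal B(\mathcal D_T)$ (not $\mathcal B(\mathcal D_{T^*})$, which is what separates this result from Theorem \ref{thm:dilation-variety1}) and with the same normalization $\prod_{i=1}^n U_i=I_{\mathcal D_T}$. The finiteness of $\dim \mathcal D_T$ is used solely to place $\Sigma$ inside $\mathcal M_d(\C)$ so that Theorem \ref{thm:poly-DVchar-2} applies, and the extra relation $D_TU_iP_iU_i^*D_T=D_{T_i}^2$ is precisely what compensates for dropping the $C._0$ hypothesis on $T$ that was available in Theorem \ref{thm:dilation-variety1}.
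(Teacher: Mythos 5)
Your proposal is correct and follows essentially the same route as the paper: both implications are obtained by matching conditions (1) and (4) of Theorem \ref{Unimain} with the two defect relations in part (2), and conditions (2), (3), (5) with the hypotheses of Theorem \ref{thm:poly-DVchar-2}, then invoking Theorem \ref{Unimain}(a) in each direction. Your remarks about the space being $\mathcal B(\mathcal D_T)$ rather than $\mathcal B(\mathcal D_{T^*})$ and the role of $D_TU_iP_iU_i^*D_T=D_{T_i}^2$ in replacing the $C._0$ hypothesis are accurate and consistent with the paper's treatment.
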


\begin{proof}

\textbf{(1) $\Rightarrow$ (2)} Since $(T_1, \dots , T_n)$ possesses a unitary dilation $(W_1, \dots , W_n)$ with $W=\prod_{i=1}^n W_i$ being the minimal unitary dilation of $T=\prod_{i=1}^nT_i$, it follows from Theorem \ref{Unimain} that there are projections $P_1, \dots , P_n$ and commuting unitaries $U_1, \dots , U_n$ in $\mathcal B(\mathcal D_{T^*})$ such that $\prod_{i=1}^n U_i=I_{\mathcal D_T}$ and the conditions $(1)-(5)$ (of Theorem \ref{Unimain}) hold. Evidently, the conditions $(2), (3)$ and $(5)$ along with Theorem \ref{thm:poly-DVchar-2} imply that the set $\Sigma= \{ P_1, \dots , P_n, U_1, \dots , U_n \}$ defines the following distinguished set in $\D^n$:
\begin{equation*}
\quad  \Omega_\Sigma =\{(z_1, \dots , z_n)\in \D^n \,: \, (z_1, \dots , z_n) \in \sigma_T(U_1P_1^{\perp}+zU_1P_1\,, \dots \,,U_nP_n^{\perp}+zU_nP_n),\, z=\Pi_{i=1}^n z_i \}.
\end{equation*}

\noindent \textbf{(2) $\Rightarrow$ (1)} It suffices to prove that conditions $(2), (3)$ and $(5)$ of Theorem \ref{Unimain} hold. Since $\Sigma= \{ P_1, \dots , P_n, U_1, \dots , U_n \}$ defines the distinguished set $\Omega_\Sigma$ in $\D^n$, the desired conditions $(2), (3), (5)$ follow from Theorem \ref{thm:poly-DVchar-2}. Hence the proof is complete.

\end{proof}

We conclude this article here. Our future line of works in this direction will be based on the following open problems.
\begin{enumerate}
\item In \cite{AM05}, Agler and McCarthy have shown a sharpening of Ando's inequality (i.e. von Neumann's inequality) for a pair of commuting contractive matrices $(T_1, T_2)$ on a distinguished set in $\D^2$. However, it is too much to expect an analogue of this for a tuple of commuting contractive matrices $(T_1, \dots , T_n)$. The challenge is if we can characterize the class of commuting contractive matrices which admit von Neumann's inequality or normal boundary dilation on a distinguished set in $\D^n$ when $n$ is greater than $2$.\\

\item In \cite{pal-shalit} and \cite{D-S}, it was shown that rational dilation can succeed on a distinguished variety in the symmetrized bidisc $\mathbb G_2$ and the bidisc $\D^2$ respectively for a commuting pair of operators that are not necessarily matrices. It is naturally asked if we can characterize the class of commuting contractive tuples of operators that are not matrices yet have a distinguished set in $\D^n$ or $\mathbb G_n$ as a spectral or complete spectral set.

\end{enumerate} 

\vspace{0.2cm}

\noindent \textbf{Acknowledgment.} The author is thankful to Souvik Goswami and Sudhir Ghorpade for stimulating conversations on basic algebraic geometry.

\section{Data availability statement}

\begin{enumerate}
\item Data sharing is not applicable to this article as no datasets were generated or analysed
during the current study.

\item In case any datasets are generated during and/or analysed during the current study, they
must be available from the corresponding author on reasonable request.
\end{enumerate}

\vspace{0.3cm}

\noindent \textbf{Conflict of interest.} There is no conflict of interest.

\vspace{1cm}

\end{document}